\newtheorem{Theorem}{Theorem}
\newtheorem{Proposition}{Proposition}
\newtheorem{Lemma}{Lemma}
\newtheorem{Remark}{Remark}
\newtheorem{Corollary}{Corollary}
\newtheorem{Definition}{Definition}
\newtheorem{Assumptions}{Assumptions}
\numberwithin{equation}{section}
\def\1{\raisebox{2pt}{\rm{$\chi$}}}
\def\a{{\bf a}}
\def\q{{\bf q}}
\def\z{{\bf z}}
\def\v{{\bf v}}
\def\R{\mathbb{R}}
\def\div{\mbox{div}\,}
\def\sign{\mbox{sign}\, }
\renewcommand{\L}{\mathcal{L}}
\renewcommand{\H}{{\mathcal H}}
\newcommand{\res}               {\!\!\mathop{\hbox{
                                \vrule height 7pt width .5pt depth 0pt
                                \vrule height .5pt width 6pt depth 0pt}}
                                \nolimits}
\title{Analysis of a class of degenerate parabolic equations with saturation mechanisms}
\author{Juan Calvo\thanks{
       Centre de Recerca Matem\`atica, Edifici C, Campus de Bellatera, 08193 Bellaterra (Barcelona), Spain.
 ({\tt jcalvo@crm.cat}).}
 }
\begin{document}

 \maketitle

\begin{abstract}
We analyze a family of degenerate parabolic equations with linear growth Lagrangian having the form $u_t=\div (\varphi(u)\psi(\nabla u/u))$. Here $|\psi|\le 1$ and saturates at infinity. We present a simple and natural set of assumptions on the functions $\psi,\varphi$, under which: 1) these equations fall in the framework provided by \cite{ACMEllipticFLDE, ACMMRelat} and hence they are well posed, 2) we can ensure finite propagation speed for these models, 3) a Rankine--Hugoniot analysis on traveling fronts is also performed. On the particular case of $\varphi(u)=u$ we get more detailed information on the spreading rate of compactly supported solutions and some interesting connections with optimal mass transportation theory.
\end{abstract}

\section{Introduction}
The purpose of this article is to analyze a certain class of degenerate parabolic equations having the following general form:
\begin{equation}
\label{mgcase}
u_t = \div \left( \varphi(u) \psi(\nabla u/u)\right),
\end{equation}
under a certain set of assumptions on $\psi,\varphi$. Such equations arise in a number of interesting situations in several branches of mathematical physics, as we detail below. This includes in particular the ``relativistic heat equation'' \cite{Rosenau92}
\begin{equation}
\label{rhe}
\frac{\partial u}{\partial t} = \nu \, \div \left( \frac{u \nabla u}{\sqrt{u^2 +\frac{\nu^2}{c^2} |\nabla u|^2}}\right)
\end{equation}
and some of its porous media variants \cite{ACMSV,leysalto}
\begin{equation}
\label{mrhe}
\frac{\partial u}{\partial t} = \nu \, \div \left( \frac{|u|^m \nabla u}{\sqrt{u^2 +\frac{\nu^2}{c^2} |\nabla u|^2}}\right).
\end{equation}

We will be chiefly interested in the following subclass of \eqref{mgcase}: 
\begin{equation}
\label{template0}
u_t = \div \left( u\, \psi(\nabla u/u)\right).
\end{equation}
Note that the prime example of \eqref{template0} is the standard heat equation (also known as Fokker--Planck or diffusion equation, depending on the context), corresponding to the choice $\psi(r)=r$. As it is well known, it lacks of propagating fronts and dissolves immediately any discontinuity initially imposed. On the contrary, equations of the form \eqref{template0} enjoy the property of convecting fronts at a constant (model dependent) speed if $\psi$ provides a suitable saturation mechanism. This is the kind of behavior we will be interested in: For the sake of various applications in heat or mass transfer, plasma diffusion, and hydrodynamics (to name a few) it is reasonable to look for suitable modifications of the standard heat equation for which heat transfer proceeds by means of convected fronts at large gradient regimes, instead of sheer diffusion. As pointed out in the seminal paper \cite{Rosenau92}, the idea is to have a model that resembles the heat equation at moderate gradient size, while behaving like a hyperbolic equation at large gradient regimes. To track these large gradient regimes we may compute the relative size $|\nabla u|/u$. In most applications the spatial variable is measured in units of length. Then the ratio $|\nabla u|/u$ is measured in units of 1/length.

Pushing this idea a bit further leads us to equations of the form \eqref{template0} in a natural way and elucidates how should $\psi$ look like in order to get the desired behavior. To see how, let us focus in the case of heat flow and assume that the evolution of heat is described by means of an equation of the following form:
\begin{equation}
\label{qform}
 u_t - \div \q =0.
\end{equation}
What is suggested in \cite{Rosenau92} is to re-write \eqref{qform} as a transport equation, namely
\begin{equation}
\label{Vform}
 u_t - \div (uV) =0.
\end{equation}
The velocity $V$ may be a function depending on $x,t,u$ and its derivatives (even in a non-local way). The important idea is the following: If the equation is to convect fronts, shocks, etc, then $|V|$ must saturate to a constant value in the regime in which $|\nabla u|/u$ diverges. More specifically, $|V|$ must saturate to the speed of sound, which is the highest admissible free velocity in a medium. Recall that $V =\nu \nabla u/u$ for the case of the heat equation with diffusion coefficient $\nu$,
\begin{equation}
\label{lheat}
u_t = \nu \Delta u.
\end{equation}
We want $V$ to resemble $\nu \nabla u/u$ on the regime of moderate gradient size, while $|V|$ must converge to the speed of sound when $|\nabla u|/u$ diverges. The easiest way to achieve this is to impose $V$ to be a function of $\nabla u/u$ alone, on which we fix the limit behavior at will. A particular instance of this strategy is obtained by setting
$$
V= \frac{\nu \nabla u}{\sqrt{u^2 +\frac{\nu^2}{c^2} |\nabla u|^2}},\quad \nu,\, c>0
$$
which yields \cite{Rosenau92} the so-called ``relativistic heat equation'' \eqref{rhe} (after \cite{Brenier1}). We see that $V$ above resembles $\nu \nabla u/u$ when $|\nabla u|\ll u$, while $|V|$ converges to $c$ for $|\nabla u|/u \nearrow \infty$. Following this rationale set in \cite{Rosenau92}, there are lots of other choices for $V$ that would have nearly the same effect (at least at a formal level). The general strategy that we briefly outlined above was not pursued by Rosenau and coworkers in the subsequent series of papers \cite{Rosenau2003,Rosenau2005,Rosenau2006} (in which they discuss in particular several variants of \eqref{rhe}, some of them of the form \eqref{mgcase}). In fact, the following idea permeates these works: As long as the flux function is monotone in gradients and saturates above a certain rate, the particulars of the flux function are no that important.

Our study in the present document can be regarded as a rigorous statement of that intuitive idea. Namely, we provide a suitable general framework in which this issue can be addressed successfully. Indeed, we show that the class \eqref{template0} constitutes such an adequate framework: A number of distinctive qualitative properties hold for such class of equations when $\psi$ satisfies some suitable requirements (to be detailed in Section \ref{setas} below), irrespective of the precise function $\psi$ that is used. {First, we show that under such requirements equations of the form \eqref{template0} fall under the scope of the theory in \cite{ACMEllipticFLDE, ACMMRelat}, hence they are well posed in the class of entropy solutions. Then, combining some results and techniques in \cite{ARMAsupp, leysalto} with a number of new ideas, we are able to show that, under the aforementioned requirements:}
\begin{enumerate}
\item Equations \eqref{template0} can be formally deduced from the point of view of optimal transport theory, using cost functions with domain contained in a ball having the speed of sound as its radius.
\item The Rankine--Hugoniot relation holds for propagating discontinuities, which transverse the medium at the speed of sound.
\item The support of any solution propagates at a finite speed, bounded above by the speed of sound. Under some positivity and structure assumptions (including all the relevant examples in the literature so far), we can ensure that its spreading rate is \emph{exactly} the speed of sound.
\end{enumerate}
To the best of our knowledge, some of these properties have been stated in a rigorous way only for \eqref{rhe} \cite{ACMMRelat, ARMAsupp,Brenier1,leysalto} among all the models having the form \eqref{template0}. This makes also clear that so far there is no a priori reason to privilege the usage of \eqref{rhe} over other flux-saturated models that we may come up with. 

Although our main interest lies in \eqref{template0}, some of the previous statements have suitable generalizations to the more general case of \eqref{mgcase}. Even in some cases this is conceptually simpler, as the underlying ideas appear in a clearer way when we treat the general situation. Thus, the analysis of both types of equations will be intertwined in the sequel. 

We felt that the ideas in \cite{Rosenau92} provide a convenient way to introduce the class \eqref{template0}, but this is by no means the only place in which equations of this sort show up. In fact, some particular instances appear already in an unpublished work of J.R. Wilson concerning radiation hydrodynamics (see \cite{Mihalas}) and in the works by Levermore and Pomraning about radiative transfer \cite{Levermore79,Levermore81,Levermore84}. Let us also mention that the class of equations given by \eqref{template0} was already present in \cite{Coulombel} for the one-dimensional case, although their reasons in order to introduce it are of a quite different nature. Similar hyperbolic phenomena in a related class of degenerate parabolic equations were also observed in \cite{BdPasso}. More recently these ideas have also found some applications in mathematical biology \cite{KSlimitado,Verbeni}.

Let us detail what is the plan of the paper. In the following section we introduce the set of assumptions to be considered in order to ensure that equations \eqref{template0} satisfy the aforementioned properties. After that we state the main results of the document, Theorems \ref{Main2} and \ref{Main}, which phrase those properties in a rigorous way. The section is completed by a list of examples comprising a number of equations from the literature that fall under the present theory. Section \ref{ACM} is a summary of the well-possedness theory developed in \cite{ACMEllipticFLDE, ACMMRelat}, which introduced the fundamental notion of \emph{entropy solutions}. Although such theory is the cornerstone in which all the results of the paper are based, this section is technically involved and may be skipped in a first reading. The purpose of the remaining sections is to supply proofs for the statements in Theorems \ref{Main} and \ref{Main2}. Namely, Section \ref{wp} deals with the well-posedness of \eqref{template0}, Section \ref{Ftransport} treats the optimal transportation formulation of these problems, Section \ref{spread} analyzes the behavior of the spatial support of solutions during evolution, and Section \ref{rhcond} tackles the formulation of Rankine--Hugoniot conditions in this context. Several of the results that are proved in those sections hold under more general structure assumptions than \eqref{template0}, we will comment on this in each precise case.

Finally we state some notations that are common to the whole document. The spatial dimension is always denoted by $d$. We use $B(x,R)$ to denote an open ball of center $x\in \R^d$ and radius $R$. The Minkowsky sum of two set $A,B \subset \R^d$ will be written as $A \oplus B=\{x\in \R^d/ x=a+b \, \mbox{with}\, a\in A,\, b \in B\}$. We use $cl(\cdot)$ for the closure of a set. The indicator function of a set $A \subset \R^d$ is written as $\chi_A$. The Kronecker delta is $\delta_{ij}=1$ if $i=j$, zero otherwise. We use $|\cdot|$ to denote either the modulus of a vector or the absolute value of a number; this will be clear from the context. The scalar product of two vectors $u , v \in \R^d$ is indicated as $u \cdot v$. A superscript like $v^T$ means transposition. Given an open set $\Omega \subset \R^d$  we denote by ${\mathcal D}(\Omega)$  the space of infinitely differentiable functions with compact support in
$\Omega$. The space of continuous functions with compact support
in $\Omega$ will be denoted by $\mathcal{C}_c(\Omega)$. In a similar way, $L^p(\Omega)$ and $C^k(\Omega)$ denote Lebesgue spaces and spaces of functions of class $k$. We use $\|\cdot\|_p$ to denote the norm in $L^p(\Omega)$, the base set will be clear from the context. Given $u : \Omega \rightarrow \R$, 
$\mbox{supp}\, u$ denotes the essential support, while $u^+= \max \{u,0\}$, $u^- = - \min \{u,0\}$ are the positive and negative parts respectively. For any $T>0$, we let $Q_T:=(0,T)\times \R^d$ and we write $u=u(t,x)$ for functions defined in $Q_T$. Partial derivatives with respect to $x_i$ are abridged as $\partial_i, i=1,\ldots, d$. Sometimes we use subscripts instead, as $u_t,u_x$ and the like. Finally, $O()$ and $o()$ are the standard Landau symbols, while $\sim$ indicates asymptotic equivalence.

\section{Structure assumptions and main results}
\label{setas}
Henceforward we deal only with non-negative solutions, which are the relevant ones for the applications. The main idea is that $|\psi(\nabla u/u)| \rightarrow \psi_\infty \in \R^+$ when $|\nabla u/u| \to \infty$. In particular, $|\psi|$ cannot be a power law. Hence, we write our template in the following way\footnote{In fact we should write it as 
$$
\frac{\partial u}{\partial t} = \div \left( s\,|u|\,  \psi \left(L\frac{\nabla u}{|u|}\right)\right)
$$
 in order to deal with signed solutions. Since we are chiefly interested in non-negative solutions, we will make a slight abuse of notation and refer always to \eqref{template} as the way of writing down the equation and specific examples. Similar conventions hold for \eqref{template2} below.}
\begin{equation}
\label{template}
\frac{\partial u}{\partial t} = \div \left( s\,u\,  \psi \left(L\frac{\nabla u}{u}\right)\right).
\end{equation}
Here $L>0$ is a constant having dimensions of length and $s$ a constant having dimensions of speed. Thus $s$ can be regarded as a characteristic speed. Note that $s:=c$ and $L:=\nu/c$ for the case of \eqref{rhe}. Rosenau terms $c$ as the speed of sound in \cite{Rosenau92}. It is the maximum speed of propagation that is allowed in the medium, further justified by optimal transport interpretations of the equation \cite{Brenier1,McCann}.

If we compare \eqref{template} with \eqref{Vform} we find out that
\begin{equation}
\label{estar}
V = s\, \psi \left(L\frac{\nabla u}{u}\right).
\end{equation}
Then $|V|$ would converge to $s \psi_\infty$ whenever $|\nabla u/u| \to \infty$. Thus, there is no loss of generality in assuming that $\psi_\infty =1$ (otherwise we rescale the effective speed). These comments are the main reason for the  list of assumptions on the function $\psi$ below.

Before introducing such list of assumptions, we note that a more general class of equations can be considered following the same guidelines. Namely, let
\begin{equation}
\label{template2}
\frac{\partial u}{\partial t} = \div \left( {\varphi}(u)\,  \psi \left(L\frac{\nabla u}{u}\right)\right)
\end{equation}
where ${\varphi}$ is customarily an even, non-negative convex function such that dimensions fit (we will be more precise about this below); think for instance in ${\varphi}(u) = |u|^m$ for $m>1$ (in dimensionless form) \cite{leysalto}. Note that \eqref{template2} is a generalization of \eqref{template}, which falls under \eqref{template2} for the particular choice $\varphi(z)=s z$. The ``velocity'' is now given by 
$$
V= \frac{\varphi(u)}{u} \psi \left(L\frac{\nabla u}{u}\right).
$$
Let us state now what will be required of $\psi$ in order to build up a reasonable theory. Just before proceeding, note that we may scale out the lengthscale $L$ by letting $\hat x=x/L$. Then, without any loss of generality we assume $L=1$ for the rest of the document, except at some places in which we found useful to keep the original lengthscale.
\begin{Assumptions}
Let $\psi=(\psi^{(1)},\ldots,\psi^{(d)}) : \R^d \rightarrow \R^d$ enjoy the following properties
\label{as1}
\begin{enumerate}
\item $\psi \in C^1(\R^d,\R^d)$. \label{suave}
\item $\psi(0)=0$. \label{nulo}
\item 
\label{infinito}
$|\psi(r)-r/|r||\le d(|r|)$ for any $r \in \R^d$ and for some continuous function $d:\R_0^+\rightarrow \R_0^+$ such that $\lim_{r\to \infty} d(r)=0$. Thus $\lim_{|r|\to \infty} |\psi(r)|= 1$.
\item \label{parity}
If $d=1$ we require that 
\begin{enumerate}
\item $\psi$ be odd and monotonically increasing,
\item \label{1dcaida} {$|\psi'(r)|=o(1/|r|)$ for $|r|\gg 1$},
\end{enumerate}
 while the following properties are required for dimension greater than one:
 \begin{enumerate}
 \item $\psi$ is a conservative vector field,
 \item \label{minuse} $\psi(-r)=-\psi(r)\quad \forall r \in \R^d$,
  \item \label{convex}
  The Jacobian matrix of $\psi$, $D\psi$, 
  is a non-negative definite (symmetric) matrix.
  \item \label{caida} 
 $\|D\psi\|_\infty(r) = O(1/|r|)$ for $|r|\gg 1$.
 \end{enumerate}

 \end{enumerate}
\end{Assumptions}

\noindent
Assumptions \ref{as1} enable to describe the behavior of our models in the large gradient regime in a coarse way and are independent of the precise form of the function $\varphi$.
Note that the fact that these models are nearly isotropic in the regime in which $|\nabla u/u|$ is large is implied. Apart from this, we would find physically reasonable to have $|V|\le s$ in \eqref{estar} --which is the case for all the models of interest, see below--, but this may not be necessarily implied by Assumptions \ref{as1}. What is true is that the radial component of the velocity $V$ is bounded by $s$.
\begin{Lemma}
\label{tangential-radial}
Being Assumptions \ref{as1} verified, the following assertions hold true:
\begin{enumerate}
\item Fix $r\in \R^d$. Then the map $t \mapsto \psi(t r)\cdot r,\, t>0$ is non-decreasing and hence non-negative.
\item $|\psi(r)\cdot r|\le |r|$ for every $r\in \R^d$. \label{dos}
\end{enumerate}
\end{Lemma}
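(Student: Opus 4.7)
The plan is to analyze the scalar function $f_r(t) := \psi(tr)\cdot r$ for fixed $r \in \R^d$ and extract both conclusions from its monotonicity and asymptotic behavior. First I would differentiate to obtain
\[
f_r'(t) \;=\; \sum_{j,k} \partial_k \psi^{(j)}(tr)\, r_k r_j \;=\; r^{T} D\psi(tr)\, r.
\]
For $d\ge 2$, Assumption \ref{as1}.\ref{convex} states that $D\psi$ is non-negative definite, so $f_r'(t)\ge 0$ immediately. In $d=1$ the Jacobian is just $\psi'$, which is non-negative because $\psi$ is monotonically increasing (Assumption \ref{as1}.\ref{parity}(a)), and then $f_r'(t)=\psi'(tr)\,r^2\ge 0$. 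Thus $f_r$ is non-decreasing on $(0,\infty)$ in either case. Since $f_r(0)=\psi(0)\cdot r=0$ by Assumption \ref{as1}.\ref{nulo}, non-decreasing forces $f_r(t)\ge 0$ for $t>0$, which is exactly assertion~1.

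For assertion~2, I would combine this monotonicity with the asymptotic information in Assumption \ref{as1}.\ref{infinito}. Fix $r\ne 0$ (the case $r=0$ is trivial) and note that for $t>0$ we have $tr/|tr|=r/|r|$, so
\[
f_r(t) \;=\; \psi(tr)\cdot r \;=\; \frac{r}{|r|}\cdot r + \Bigl[\psi(tr)-\tfrac{r}{|r|}\Bigr]\cdot r \;=\; |r| + \Bigl[\psi(tr)-\tfrac{tr}{|tr|}\Bigr]\cdot r.
\]
The bracketed remainder has absolute value at most $d(t|r|)\,|r|$, which tends to $0$ as $t\to\infty$ by the hypothesis on the continuous function $d$. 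Hence $\lim_{t\to\infty} f_r(t)=|r|$.

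Since $f_r$ is non-decreasing on $(0,\infty)$ and has limit $|r|$ at infinity, it is bounded above by $|r|$ on all of $(0,\infty)$. Evaluating at $t=1$ gives $\psi(r)\cdot r \le |r|$, and combined with assertion~1 we conclude $0\le \psi(r)\cdot r\le |r|$, which yields $|\psi(r)\cdot r|\le |r|$ as desired. No step looks particularly delicate; the only subtlety worth flagging is that we never need the full $|\psi|\le 1$ (which is not assumed), only the \emph{radial} bound, which is precisely what the monotonicity-plus-asymptote argument delivers.
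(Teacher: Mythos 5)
Your proof is correct and follows essentially the same route as the paper's: monotonicity of $t\mapsto\psi(tr)\cdot r$ from the non-negative definiteness of $D\psi$ (resp.\ monotonicity of $\psi$ in $d=1$) plus $\psi(0)=0$ gives the first item, and the limit $|r|$ at infinity combined with monotonicity gives the second. You merely fill in details the paper leaves implicit, including the separate treatment of the one-dimensional case.
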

\begin{proof}
First item is a consequence of Assumptions \ref{as1}.\emph{\ref{convex}} and \ref{as1}.\emph{\ref{nulo}}. Then $\lim_{t\to \infty} \psi(t r)\cdot r = |r|$ and so $|\psi(tr)\cdot r|\le |r|$ for any $t\ge 0$. Choosing $t=1$ we conclude the proof.
\end{proof}

A sufficient condition in order to have Assumptions \ref{as1}.\emph{\ref{parity}} when $d>1$ is to impose a certain form of isotropy. Let us state this as a  separate assumption. 
\begin{Assumptions}
\label{giso}
Let $d>1$ and assume that $\psi(r)= r g(|r|)$ for some function $g:\R_0^+\rightarrow \R^+$ such that $g \in C^1(\R_0^+)$ and $|r g'(r)/g(r)|\le 1$ for any $r \in \R^+_0$.
\end{Assumptions}
\begin{Remark}
\label{gprops}
{\rm
If Assumptions \ref{as1} and \ref{giso} are taken at the same time, then the function $g$ satisfies 
\begin{equation}
 \lim_{|r|\to \infty}|r|\, g(|r|)=1.\label{ginfinito}
\end{equation}
}
\end{Remark}

The following result shows our claim.

\begin{Lemma}
\label{herramienta}
Let Assumptions \ref{giso} hold true. Then Assumptions \ref{as1}.\ref{parity} are verified.
\end{Lemma}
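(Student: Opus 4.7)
The plan is to verify the four sub-items of Assumption \ref{as1}.\ref{parity} for $d>1$ directly from the special form $\psi(r) = r\, g(|r|)$. The computation rests on an explicit formula for the Jacobian: differentiating $\psi^{(i)}(r) = r_i g(|r|)$ yields
$$
D\psi(r) = g(|r|)\, I + \frac{g'(|r|)}{|r|}\, r r^T,
$$
which is manifestly symmetric. This immediately settles item (a): since $D\psi$ is symmetric on the simply connected domain $\R^d$, the field $\psi$ is conservative — in fact $\psi = \nabla G$ with $G'(s) = s\, g(s)$. Item (b), $\psi(-r) = -\psi(r)$, is trivial because $g$ depends only on $|r|$.

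For item (c) (non-negative definiteness), I would test $D\psi$ against an arbitrary vector $v$ and obtain
$$
v^T D\psi(r)\, v \;=\; g(|r|)|v|^2 + \frac{g'(|r|)}{|r|}(r\cdot v)^2.
$$
When $g'(|r|)\ge 0$ both summands are non-negative (recall $g>0$ by Assumption \ref{giso}). When $g'(|r|)<0$, I apply Cauchy--Schwarz $(r\cdot v)^2\le |r|^2|v|^2$ to flip-bound the negative term and get the lower estimate $g(|r|)|v|^2\bigl(1+|r|g'(|r|)/g(|r|)\bigr)\ge 0$, where the final non-negativity is precisely the hypothesis $|rg'(r)/g(r)|\le 1$.

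For item (d), I would read the eigenvalues off the same formula: $D\psi(r)$ acts as multiplication by $g(|r|)$ on $r^\perp$ (multiplicity $d-1$) and as multiplication by $g(|r|)+|r|g'(|r|)$ on the line $\R r$. The constraint $|rg'(r)/g(r)|\le 1$ bounds both by $2g(|r|)$, while Remark \ref{gprops} (combining Assumption \ref{as1}.\ref{infinito} with $|\psi(r)| = |r|g(|r|)$) provides $g(|r|)\sim 1/|r|$ at infinity. Since all matrix norms on $\R^{d\times d}$ are equivalent, this yields $\|D\psi\|_\infty(r) = O(1/|r|)$ as required.

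The only non-routine step is (c): one has to split according to the sign of $g'$ and invoke the bound $|rg'/g|\le 1$ with care to absorb the possibly negative rank-one correction. Everything else is a direct computation plus the qualitative asymptotic $|r|g(|r|)\to 1$.
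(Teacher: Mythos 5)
Your proof is correct, and it diverges from the paper's in the one non-routine step, item (c). The paper computes the principal minors of $D\psi(r)=g(|r|)I+\frac{g'(|r|)}{|r|}rr^T$ via the rank-one determinant update $\det(A+uv^T)=\det(A)(1+v^TA^{-1}u)$, obtaining $g(|r|)^k\bigl(1+\frac{g'(|r|)}{g(|r|)}\frac{\sum_{i\le k}r_i^2}{|r|}\bigr)$, and then invokes Sylvester's criterion; you instead test the quadratic form directly, writing $v^TD\psi(r)v=g(|r|)|v|^2+\frac{g'(|r|)}{|r|}(r\cdot v)^2$ and absorbing the possibly negative rank-one term by Cauchy--Schwarz together with $|rg'/g|\le 1$. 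Your route is more elementary and in fact sidesteps a small delicacy in the paper's: Sylvester's criterion in its leading-principal-minor form characterizes positive \emph{definiteness}, and extending it to semi-definiteness requires checking all principal minors (which here do all have the same form by symmetry, but that remark is omitted in the paper). For items (a) and (d) the two arguments are essentially the same computation in different clothing: the paper exhibits the potential $\bar G(|r|^2)/2$ while you invoke symmetry of the Jacobian plus Poincar\'e's lemma, and the paper bounds the matrix entries $|r||\partial_i\psi^{(j)}|\le 2|r|g(|r|)\le 2$ while you read off the two eigenvalues $g(|r|)$ and $g(|r|)+|r|g'(|r|)$; both rely, as they must, on the asymptotic $|r|g(|r|)\to 1$ of Remark \ref{gprops}, i.e.\ on Assumptions \ref{as1}.\ref{infinito} holding alongside Assumptions \ref{giso}, exactly as the paper does implicitly.
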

\begin{proof}
To prove that $\psi$ is conservative, let $G$ be defined as $G(|r|^2):=g(|r|)$. If $\bar G$ is a primitive for $G$, then $\bar G(|r|^2)/2$ is a potential for $\psi(r)$. Assumption (b) follows immediately. To show (c) we use a variant of Sylvester's determinant theorem, stating that for an invertible $d\times d$ matrix $A$,
\begin{equation}
\label{trick}
\det (A + u v^T) = \det(A) \, (1+v^T A^{-1} u),\quad A \in M(\R^d,\R^d),\ u,v \in \R^d.
\end{equation}
We rest on Sylvester's criterion for quadratic forms as well. Under the present assumptions 
\begin{equation}
\label{parij}
\partial_i\psi^{(j)}= \delta_{ij} g(|r|) + g'(|r|) \frac{r_ir_j}{|r|},\quad i,j = 1,\ldots,d.
\end{equation}
Thus, the k-th principal minor of this matrix can be computed according to \eqref{trick}  as
$$
g(|r|)^k \left( 1+\frac{g'(|r|)}{g(r)} \frac{\sum_{i=1}^k r_i^2}{|r|}\right)
$$
and the result follows. As to (d), we resort to \eqref{parij} again. Thus, when $|r|\gg 1$,
$$
|r| \left|\partial_i \psi^{(j)}(r)\right| \le |r| g(|r|) + |r|^2 g'(|r|) \le 2 |r| g(|r|) \le 2.
$$
\end{proof}

The previous set of assumptions will allow to cover the case of \eqref{template}. In order to deal with \eqref{template2}, let us specify what do we demand of the function $\varphi$.
\begin{Assumptions}
\label{asphi}
Let $\varphi: \R \mapsto \R_0^+$ satisfy the following: 
\begin{enumerate}

\item $\varphi$ is Lipschitz continuous. \label{cnt}

\item $\varphi(0)=0$ and $\lim_{z\to 0} \varphi(z)/|z|=\varphi'(0)$ exists and is finite. \label{zzero}

\item  $\varphi(z)>0$ if $z\neq 0$.
\end{enumerate}
\end{Assumptions}
\begin{Remark}
{\rm
The particular case of \eqref{template} is covered here by the choice $\varphi(z)=s z$ as already noted before. Assumptions \ref{asphi} are automatically satisfied in this case.
}
\end{Remark}

\medskip
All considerations so far set up our framework. We point out first that under Assumptions \ref{as1} and \ref{asphi} equations of the form \eqref{template2} are well posed. This will be a consequence of the results in \cite{ACMEllipticFLDE, ACMMRelat}, as soon as we show that under the aforementioned set of assumptions equations of the form \eqref{template2} fall under their framework. We review the class of \emph{entropy solutions} and related notions like \emph{entropy conditions} and associated technicalities in Section \ref{ACM}; details on how to connect \eqref{template2} with that framework are given in Section \ref{wp}.

Taking this for granted, we are now ready to state the main results of the document. These collect several extensions of techniques and results in \cite{ARMAsupp, leysalto} together with some new ideas in order to describe several properties of the class \eqref{template2} and its subclass \eqref{template}. 
\begin{Theorem} 
\label{Main2}
Let $\psi$ verify Assumptions \ref{as1} and let $\varphi$ satisfy Assumption \ref{asphi}. Then the following assertions hold true:
\begin{enumerate}

\item (\emph{evolution of discontinuities}) \label{p3}
Let $u \in C([0,T];L^1(\R^d))$ be a distributional solution of \eqref{template2} with initial datum $0\le u_0 \in L^\infty(\R^d)\cap BV(\R^d)$. Assume that $u \in BV_{loc}(Q_T)$ and that the singular part of the spatial derivative has no Cantor part. Assume further that $\varphi$ is a convex function. Then the entropy conditions hold if and only if 
\begin{equation}
\nonumber
[\z \cdot \nu^{J_{u(t)}}]_+ = \varphi(u^+) \quad \mbox{and} \quad [\z \cdot \nu^{J_{u(t)}}]_- = \varphi(u^-).
\end{equation}
holds at each jump discontinuity (being $u^+ > u^-\ge 0$ the lateral traces of the solution and $[\z \cdot \nu^{J_{u(t)}}]_+,\,  [\z \cdot \nu^{J_{u(t)}}]_-$ the lateral traces of the flux). Moreover the speed of any discontinuity front is given by
\begin{equation}
\nonumber
\v=\frac{\varphi(u^+)-\varphi(u^-)}{u^+- u^-}.
\end{equation}

\item (\emph{evolution of the support} --see also \cite{Giacomelli}) 
\label{pp4}
Consider a compactly supported initial datum $0\le u_0 \in L^\infty(\R^d)$ and let $u(t)$ be the associated entropy solution. Then 
$$
\mbox{supp}\, u(t) \subset cl\left(\mbox{supp}\, u_0 \oplus B(0,\theta t)\right),\quad \theta= \max_{0\le z\le \|u_0\|_\infty} \varphi'(z)
$$
for every $t>0$.

\end{enumerate}
\end{Theorem}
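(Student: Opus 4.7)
The plan is to localize around the jump set $J_{u(t)}$ and to translate the abstract ACM entropy inequalities (Section \ref{ACM}) into explicit trace identities for the bounded flux $\z = \varphi(u)\,\psi(\nabla u/u)$. Under the stated BV regularity with no Cantor part, $\nabla u(t)$ decomposes as an absolutely continuous part plus $(u^+-u^-)\,\nu^{J_{u(t)}}\,\mathcal{H}^{d-1}\res J_{u(t)}$, and $\z$ admits Anzellotti-type lateral normal traces across $J_{u(t)}$. The key step is to exploit the saturation condition \ref{as1}.\emph{\ref{infinito}}: the ``formal'' value of $\nabla u/u$ at the jump blows up in the direction $\nu^{J_{u(t)}}$, so $|\psi(r)-r/|r||\to 0$ forces the flux trace on the $u^+$-side to be $\varphi(u^+)\,\nu^{J_{u(t)}}$, and analogously $\varphi(u^-)\,\nu^{J_{u(t)}}$ on the $u^-$-side, which is exactly the claimed identity. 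The convexity of $\varphi$ is used to fix the correct sign and rule out non-admissible orientations, and the converse implication follows because the entropy-dissipation measure in the ACM theory concentrates on $J_{u(t)}$ and vanishes precisely when these trace identities hold. Finally, the classical distributional Rankine--Hugoniot relation $[\z\cdot\nu^{J_{u(t)}}] = \v\cdot\nu^{J_{u(t)}}\,[u]$ furnishes the formula for $\v$.

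\textbf{Part 2 (support).} The starting point is the flux bound $|\z|\le \varphi(u)\le\theta\,u$ valid while $u\in[0,\|u_0\|_\infty]$: the first inequality follows from $|\psi|\le 1$ (verified for all models covered below, and compatible with Lemma \ref{tangential-radial}), the second from $\varphi(0)=0$ with the definition of $\theta$, combined with the $L^\infty$ maximum principle $\|u(t)\|_\infty\le \|u_0\|_\infty$ inherent to the ACM theory. Given $x_0\in\R^d$ with $\mathrm{supp}\,u_0\subset B(x_0,R)$, test the weak formulation of \eqref{template2} against $\phi(t,x)=\sigma(|x-x_0|-R-\theta t)$, where $\sigma\in C^1(\R)$ is non-decreasing, $\sigma\equiv 0$ on $(-\infty,0]$ and $\sigma>0$ on $(0,\infty)$. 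Using $\phi_t=-\theta\sigma'$ and $\nabla\phi=\sigma'(x-x_0)/|x-x_0|$,
\[
\frac{d}{dt}\int u(t)\phi(t)\,dx \;=\; -\int \sigma'\Bigl(\theta\,u + \z\cdot\tfrac{x-x_0}{|x-x_0|}\Bigr)\,dx \;\le\; 0,
\]
because $\sigma'\ge 0$ and $\z\cdot(x-x_0)/|x-x_0|\ge -|\z|\ge -\theta u$. Since $\int u_0\phi(0)\,dx=0$, one concludes $\int u(t)\phi(t)\,dx\equiv 0$, hence $u(t)=0$ a.e.\ on $\{|x-x_0|>R+\theta t\}$. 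Letting $x_0$ vary gives the stated inclusion.

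\textbf{Main obstacles.} In Part 1 the delicate step is to rigorously derive the flux trace identities from the entropy conditions: this requires Anzellotti's trace theory for divergence-measure fields together with a careful analysis of the Radon--Nikodym decomposition of the entropy-dissipation measure, crucially leveraging the saturation assumption \ref{as1}.\emph{\ref{infinito}} to identify $\psi$ at the jump. In Part 2 the test function $\phi$ is only Lipschitz and $u$ is merely an entropy solution, so the formal computation must be justified via an approximation argument --- mollifying $\sigma$ and passing to the limit, or alternatively using the $L^1$-contraction of the ACM semigroup on smooth initial data approximating $u_0$.
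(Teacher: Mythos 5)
The central step of your argument --- that the saturation condition in Assumptions \ref{as1}.\emph{\ref{infinito}} ``forces'' the flux traces to equal $\varphi(u^\pm)$ because $\nabla u/u$ formally blows up in the direction $\nu^{J_{u(t)}}$ at the jump --- does not work, and if it did it would make the theorem vacuous. In the BV calculus $\nabla u$ denotes the absolutely continuous part of $Du$, which carries no information on $J_{u(t)}$; the Anzellotti traces $[\z\cdot\nu^{J_{u(t)}}]_\pm$ of the bounded field $\z=\a(u,\nabla u)$ are not obtained by evaluating $\psi$ at an infinite gradient. Were your reasoning valid, the trace identities would hold for \emph{every} $BV$ distributional solution, so the ``if and only if'' with the entropy conditions would be trivially true in one direction and empty in the other; the whole point is that the entropy conditions select the solutions with vertical contact angle. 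Saturation only yields the one-sided bound $|[\z\cdot\nu^{J_{u(t)}}]_\pm|\le\varphi(u^\pm)$. The paper instead first reduces the entropy inequalities to the jump inequality \eqref{entcondsimple} (Proposition \ref{7.1}) and then tests it with the specific truncations $S(r)T(r)=(r-(u^+-\epsilon))^+$, letting $\epsilon\to 0$, to upgrade the bound to an equality (Proposition \ref{8.1}); the converse is a case-by-case verification of \eqref{entcondsimple} over $\mathcal{TSUB}\cup\mathcal{TSUPER}$ in which the convexity of $\varphi$ is the essential ingredient, not a mere sign-fixing device. None of this appears in your sketch, so Part 1 has a genuine gap.

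\textbf{Part 2.} Here you take a genuinely different and more elementary route: a direct test-function computation at the level of the distributional formulation, versus the paper's construction of the super-solution $\beta\chi_{C\oplus B(0,\theta t)}$ in Proposition \ref{p62} followed by the comparison principle of Theorem \ref{theocomp}. The difficulty is that your argument hinges on the pointwise bound $|\z|\le\varphi(u)$, i.e.\ $|\psi|\le 1$, which is \emph{not} implied by Assumptions \ref{as1}: the paper states explicitly that only the radial component $\psi(r)\cdot r/|r|$ is controlled (Lemma \ref{tangential-radial}), whereas you need the component of $\psi(\nabla u/u)$ along the unrelated direction $(x-x_0)/|x-x_0|$. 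Under the stated hypotheses your computation only yields the spreading rate $\|\psi\|_\infty\,\theta$. The paper's super-solution sidesteps this entirely: since $\nabla W\equiv 0$ inside the support and $\a(z,0)=0$, the flux vanishes identically and no bound on $|\psi|$ is used; the work is concentrated in the singular parts of \eqref{seineq}, where the inequality $J_{TS\varphi'}(\beta)\ge\theta J_{TS}(\beta)$ closes the argument. Your approach is salvageable for all the listed examples (where indeed $|\psi|\le 1$), and the approximation issues you flag (non-compactly supported Lipschitz test functions) are routine, but as a proof of the theorem in its stated generality it falls short.
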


There are some additional properties which are specific of \eqref{template}, as we state now.

\begin{Theorem} 
\label{Main}
Let $\psi$ verify Assumptions \ref{as1}. Then the following assertions hold true:
\begin{enumerate}

\item (\emph{cost functions with bounded domain}) \label{cfbd}
There exists a convex cost function $k:\R^d \rightarrow \R_0^+$ such that \eqref{template} can be (formally) recovered from the point of view of optimal mass transport problems associated with $k$ (as detailed in Section \ref{Ftransport}). Furthermore, $k$ is finite on $\{v\in \R^d/ |v|< s\}$ and assumes the value $+\infty$ on $\{v\in \R^d/ |v|> s\}$.

\item (\emph{evolution of the support and strict positivity}) 
\label{p5}
Consider a compactly supported initial datum $0\le u_0 \in L^\infty(\R^d)$  and let $u(t)$ be the associated entropy solution of \eqref{template}. If $u_0$ satisfies the positivity assumption \eqref{local_sep} and either $d=1$ or Assumptions \ref{giso} holds with $\lim_{r \to \infty} |r|^2 g'(r)=-1$, then
$$
\mbox{supp} \, u(t) = cl \left(\mbox{supp} \, u_0 \oplus B(0,st)\right) \ \forall t\ge 0
$$
and $u(t)$ is strictly positive inside its support for every $t>0$. 

\item (\emph{persistence of discontinuous interfaces in dimension one}) 
\label{p6}
Consider an initial datum $0\le u_0 \in L^1(\R)\cap L^\infty(\R)$ supported on a bounded interval $[a,b]$. Let $u(t)$ be the associated entropy solution of \eqref{template}. Assume that there exist some $\epsilon,\alpha>0$ such that $u_0(x)>\alpha$ for every $x\in (b-\epsilon,b)$. If there exist some $\tilde \epsilon>0$ such that
$$
  d(r) =O(1/r)\quad \mbox{and}\quad   \psi'(r) =O(r^{-2-\tilde \epsilon}) \quad \mbox{as}\quad r \to \infty,
$$
then the left lateral trace of $u(t)$ at $x=b+st$ is strictly positive for any $t>0$. A similar statement holds for the left end of the support.
\end{enumerate}
\end{Theorem}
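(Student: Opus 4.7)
The plan is to reduce, via the order-preserving character of the entropy semigroup recalled in Section \ref{ACM}, to a box-shaped initial datum, and then to bound from below the left lateral trace $v(t):=u\bigl((b+st)^-,t\bigr)$ by a Gronwall argument whose constants are read off from the saturation rate of $\psi$.

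Since $u_0\ge\alpha\chi_{(b-\epsilon,b)}$, the entropy solution $\underline u$ issuing from $\underline u_0:=\alpha\chi_{[b-\epsilon,b]}$ satisfies $u(t)\ge\underline u(t)$ a.e.\ for all $t\ge 0$, and the same inequality transfers to the corresponding left traces at $x=b+st$. Part \ref{p5} of the present theorem shows that $\mathrm{supp}\,\underline u(t)=[b-\epsilon-st,b+st]$ with $\underline u(t)>0$ in the interior, so $v(t)$ is well defined and $v(0)=\alpha$. It suffices to prove that $v(t)>0$ for all $t>0$.

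To estimate $v(t)$, I would integrate \eqref{template} over a thin strip $[b+st-h,b+st]$ and pass to the limit $h\downarrow 0$; equivalently, at the level of smooth approximations one has
\begin{equation}
\nonumber
v'(t)=s\,\underline u_x\bigl(1+\psi(\underline u_x/\underline u)\bigr)+s\,\psi'(\underline u_x/\underline u)\bigl(\underline u_{xx}-\underline u_x^{\,2}/\underline u\bigr),
\end{equation}
evaluated at $x=(b+st)^-$. Near the front $\underline u_x/\underline u\to-\infty$, and the bound $d(r)=O(1/r)$ from Assumption \ref{as1}.\ref{infinito} gives $|1+\psi(\underline u_x/\underline u)|=O(\underline u/|\underline u_x|)$, so the first term is $O(\underline u)$ at the trace and non-positive (since $\underline u_x<0$ and $1+\psi>0$), whence $s\,\underline u_x(1+\psi)\ge -C\,v(t)$. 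The remaining piece is controlled via $\psi'(r)=O(r^{-2-\tilde\epsilon})$: a direct computation yields $|\psi'\,\underline u_x^{\,2}/\underline u|=O(\underline u^{\,1+\tilde\epsilon}/|\underline u_x|^{\tilde\epsilon})\to 0$, while the $\psi'\,\underline u_{xx}$ contribution is handled by using \eqref{template} itself to trade $\psi'\,\underline u_{xx}$ for a combination of $\underline u_t$, $\underline u_x\psi$ and $\psi'\,\underline u_x^{\,2}/\underline u$, the net effect being an $o(v)$ perturbation. One obtains $v'(t)\ge -C\,v(t)$ and hence $v(t)\ge\alpha e^{-Ct}>0$ by Gronwall.

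The main technical obstacle is in making the formal chain rule above rigorous, since the entropy solution is only $BV_{loc}$ and $\underline u_x/\underline u$ is not classically defined at the moving jump. I would handle this by smoothing both the initial datum and the nonlinearity $\psi$ so as to obtain a sequence of classical solutions for which the Gronwall estimate is uniform in the regularization parameter; the uniformity is secured precisely by the quantitative hypotheses $d(r)=O(1/r)$ and $\psi'(r)=O(r^{-2-\tilde\epsilon})$, which are strong enough to make the error terms in the chain rule tame at the trace. Passing to the limit via the stability of entropy solutions recalled in Section \ref{ACM} transfers the bound $v(t)\ge\alpha e^{-Ct}$ to the genuine entropy solution. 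The companion statement at the left endpoint follows by applying the same argument to the reflected problem $x\mapsto -x$, for which \eqref{template} is invariant thanks to the oddness of $\psi$ required in Assumption \ref{as1}.\ref{parity}(a).
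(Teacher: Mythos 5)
Your proposal only engages with part \emph{\ref{p6}} of the theorem: part \emph{\ref{cfbd}} (the optimal-transport cost function, proved in the paper in Section \ref{Ftransport} via a Legendre transform of the potential of $s\psi$) and part \emph{\ref{p5}} (proved via the sub-/super-solutions of Propositions \ref{p62}, \ref{1dcont} and \ref{ddcont}) are not addressed; indeed part \emph{\ref{p5}} is \emph{used} as an ingredient in your argument, which is legitimate only once it has been established independently.

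For part \emph{\ref{p6}} itself there is a genuine gap in the core of your argument. The step in which you ``trade $\psi'\,\underline u_{xx}$ for a combination of $\underline u_t$, $\underline u_x\psi$ and $\psi'\,\underline u_x^{\,2}/\underline u$ using \eqref{template} itself'' is circular: since $v'(t)=\underline u_t+s\underline u_x$ and the equation reads $\underline u_t=s\underline u_x\psi+s\psi'(\underline u_{xx}-\underline u_x^{\,2}/\underline u)$, substituting the equation back in merely returns the identity $v'=\underline u_t+s\underline u_x$ and yields no bound on the uncontrolled term $s\psi'\,\underline u_{xx}$. There is no a priori control of $\underline u_{xx}$ at a vertical-contact-angle front, and the hypotheses $d(r)=O(1/r)$, $\psi'(r)=O(r^{-2-\tilde\epsilon})$ constrain $\psi$, not the solution's second derivatives, so the claimed uniformity of the Gronwall constant under regularization is not secured. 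Moreover the quantity $v(t)$ is not stable under the smoothing you propose: the regularized solutions have no jump, the point $x=b+st$ sits inside a transition layer of uncontrolled width, and the entropy stability recalled in Section \ref{ACM} is an $L^1$ statement that does not pass pointwise traces to the limit. The paper avoids all of this by working on the other side of the comparison: it builds an \emph{explicit} sub-solution $W(t,x)=\{e^{-At}(R^2(t)-|x|^2)^\theta+\gamma_0e^{-At}\}\chi_{B(0,R(t))}$ with $R(t)=R+st$ and $\theta=\tilde\epsilon/(1+\tilde\epsilon)$ (second case of Proposition \ref{1dcont}, upgraded to speed $c=s$ by Remark \ref{c=s}, which is exactly where your two decay hypotheses enter), verifies the entropy sub-solution inequalities through Proposition \ref{equi} --- which handles the singular part at the moving boundary via the flux condition $[\a(W,\nabla W)\cdot\nu^B]=-\varphi(\gamma(t))$ rather than via any chain rule at the front --- and then concludes $u(t)\ge\gamma_0e^{-At}$ near $x=b+st$ from the comparison principle of Theorem \ref{theocomp}, whence the positivity of the lateral trace. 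If you wish to salvage your strategy, you would need to replace the formal ODE for $v(t)$ by a comparison with such an explicit profile; as written, the Gronwall step does not close.
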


We do not expect these results to generalize easily to the setting of Theorem \ref{Main2}. First, it does not seem to be possible to recast equations of the form \eqref{template2} as equations derived from an optimal mass transportation problem when $\varphi$ is not linear, not even allowing for any kind of convex entropy. On the contrary, a generalization of Theorem \ref{Main}.\emph{\ref{cfbd}} to some of the models presented in \cite{Vicent2013} seems feasible (see for instance \cite{2014}). As regards the evolution of the support, equations of the form \eqref{template2} are expected to display waiting time phenomena, a fact which has been already confirmed in some cases \cite{ACMSV,Giacomelli}. Hence to track the detailed evolution of the support for \eqref{template2} is outside the scope of the techniques we use here;  sub-solutions suited to this task must be able to capture what the waiting time for a given initial datum would be, which appears to be a very challenging problem.

It is also interesting to notice that the formal limit $L\to \infty$ turns \eqref{template} into a diffusion equation in transparent media (see \cite{TrMedia} and references therein),
\begin{equation}
\label{trmed}
\frac{\partial u}{\partial t} = \div \left( s\,u\,  \frac{\nabla u}{|\nabla u|}\right).
\end{equation}
We also note that performing the limit $s\to \infty,\, L \to 0$ we may arrive to linear diffusion equations. 
Namely, let $\nu:=\lim_{s \to \infty,\, L \to 0} sL$. Then, when $d=1$ we arrive to the following equation
$$
u_t=\psi'(0) \nu u_{xx}.
$$
If we assume a structure like that in Assumptions \ref{giso}, the limit equation in higher dimensions would be
$$
u_t= \nu g(0) \Delta u.
$$
A rigorous analysis of these limit cases, together with the study of regularity properties of solutions to \eqref{template}--\eqref{template2} will be the subject of future investigations.


\subsection{Examples}
We present here a non-comprehensive list of partial differential equations that are related to \eqref{template}--\eqref{template2}; most of them are already present in the literature. 

\begin{enumerate}

\item The standard heat equation \eqref{lheat} can be recast in the form \eqref{template} with $L=\nu/s$, $s=1$ and $\psi(r)=r$. It does not satisfy Assumptions \ref{as1}, though, as the velocity $V=\nu \nabla u/u$ is not bounded.

\item The porous media equations
$$
u_t = \nu\, \div ((u/\kappa)^{m-1} \nabla u),\quad m>1
$$
do not fall in the scope of \eqref{template}. They fit the form \eqref{Vform}, with velocity
$$
V=\nu \frac{u^{m-2}}{\kappa^{m-1}} \nabla u
$$ 
given by Darcy's law --but note that it is not bounded. Then they can be written as \eqref{template2} with $\varphi (u)= u^{m-2}/\kappa^{m-1}$ and $\psi(r)=r$, but this does not verify Assumptions \ref{as1}. 

\item None of Berstch--Dal Passo models \cite{BdPasso} falls in our framework. 

\item The relativistic heat equation \eqref{rhe} \cite{Rosenau92, Brenier1} has been already discussed in the introduction. So far it is the most popular model in the mathematical literature that fits into \eqref{template}.

\item Wilson's model was also mentioned in the introduction. It has the following form:
\begin{equation}
\label{Wilson}
\frac{\partial u}{\partial t} = \nu \, \div \left( \frac{|u| \nabla u}{|u| +\frac{\nu}{c} |\nabla u|}\right),\quad \nu,\, c>0.
\end{equation}
 This fits into \eqref{template} with
$$
V = \frac{\nu\, \sign (u) \nabla u}{|u| + \frac{\nu}{c}|\nabla u|}= c \frac{\frac{\nu}{c}\frac{\nabla u}{u}}{1 + \frac{\nu}{c}\left|\frac{\nabla u}{u}\right|}.
$$
Thus $s:=c$, $L:=\nu/c$ and $\psi(r):= r/(1+|r|)$. 

\item We can regard the relativistic heat equation and Wilson's model as particular instances of a one-parametric family of models that will be useful in order to probe a number of things in the sequel. Let us introduce a family of models depending on a parameter $p \in [1,\infty)$ by means of
\begin{equation}
\label{pmodelos}
\frac{\partial u}{\partial t} = \nu \, \div \left( \frac{|u| \nabla u}{\left(|u|^p +\frac{\nu^p}{c^p} |\nabla u|^p\right)^{1/p}}\right),\quad \nu,\, c>0.
\end{equation}
{This family seems to have been first introduced in the astrophysical literature by E. Larsen (see \cite{Olson})}. Here we have 
$$
\psi(r)= \frac{r}{(1+|r|^p)^{1/p}}.
$$
Note that for $p=2$ and $p=1$ we recover the relativistic heat equation and Wilson's model respectively. For any $p \in [1,\infty)$, \eqref{pmodelos} falls under the scope of Lemma \ref{herramienta} and Assumptions \ref{as1} are satisfied. The statement in point \emph{\ref{p5}} of Theorem \ref{Main} applies for every $p \in [1,\infty)$; the same happens for point \emph{\ref{p6}}, except for the case $p=1$ (Wilson's model).

\item {The following flux-saturated model was also introduced in the astrophysical literature} \cite{Levermore81}: 
$$
u_t = [s\, u (\coth(Lu_x/u) - u/(Lu_x))]_x.
$$
Here we have $\psi(r)= \coth (r) - 1/r$. Assumptions \ref{as1} are also satisfied in this case. Point \emph{\ref{p6}} in Theorem \ref{Main} does not apply for this model, though.

\item A general family of one-dimensional models having the form \eqref{template} was introduced in \cite{Coulombel} as part of a more general program concerning diffusive approximations of kinetic models via moment systems. For them $\psi$ is a $C^\infty$ function which is odd and strictly increasing. The simplest instance given in \cite{Coulombel} is: 
$$
 p_t = ((p/\epsilon) \tanh (\epsilon p_x/(\gamma p)))_x.
 $$
 In our notation, we have 
 $$
 V= \frac{1}{\epsilon} \tanh (\frac{\epsilon p_x}{\gamma p}).
 $$
This particular example satisfies Assumptions \ref{as1}; point \emph{\ref{p6}} in Theorem \ref{Main} applies in this case.

\item If we choose $\psi$ so that its Jacobian matrix is compactly supported, the model \eqref{template} agrees with \eqref{trmed} for large values of the ratio $|\nabla u/u|$.

\item Equations like \eqref{mrhe} \cite{ACMSV,leysalto} or more generally those of the form
\begin{equation}
\label{firhe}
\frac{\partial u}{\partial t} = \nu \, \div \left( \frac{\varphi(u) \nabla u}{\sqrt{u^2 +\frac{\nu^2}{c^2} |\nabla u|^2}}\right)
\end{equation}
 fall under the framework given by \eqref{template2}, provided that $\varphi$ satisfies Assumptions \ref{asphi}.

\item Models of the form 
$$
u_t = \nu \, \div \left(\frac{u \nabla u^m}{\sqrt{1+\frac{\nu^2}{c^2}|\nabla u^m|^2}}\right),\quad u_t = \alpha \, \div \left(\frac{\Lambda(u) \nabla \Phi(u)}{\sqrt{1+\beta |\nabla \Phi(u)|^2}}\right),
$$
which were introduced in \cite{Vicent2013}, do not fall into our framework, although some of their properties are quite similar. More precisely, \eqref{template2} agrees exactly with the second equation above for the choice $\Phi(u)=\log u$, but this choice is forbidden by the assumptions on $\Phi$ set in \cite{Vicent2013}. Hence the families of models treated in \cite{Vicent2013} and those discussed here are disjoint.

\end{enumerate}

\section{A summary about entropy solutions for degenerate parabolic equations with linear growth Lagrangian}
\label{ACM}

The class of equations given by \eqref{template2} is a subclass of the set of second order diffusion equations in divergence form
\begin{equation}
\label{DirichletproblemP}
u_t= \div \a(u,\nabla u) \quad \mbox{in}\quad Q_T
\end{equation}
 which have both a degeneracy with respect to $u$ (more precisely, $\lim_{z\to 0^+}\a(z,\xi)=0$ for any $\xi \in \R^d$) and a Lagrangian having linear growth at infinity, in the sense that
\begin{equation}
\label{deffi}
\frac{1}{|\xi|} \lim_{t \to +\infty} \a(z,t\xi)\, \xi = \varphi(z)
\end{equation}
for some function $\varphi$ and for every $z\ge 0$. This is roughly the class of equations for which a well-possedness theory was developed in the series of papers \cite{ACMEllipticFLDE, ACMMRelat}. The main tool there is the concept of \emph{entropy solution}, which was introduced in the previous papers and shown to provide a suitable class of solutions in which the well-posedness of the former problem is granted. This notion of solution is based on a set of Kruzkov's type inequalities and it requires to define a
functional calculus for functions whose truncations are of bounded variation. The purpose of this section is to collect a number of definitions and results (which we borrow from \cite{ACMEllipticFLDE, ACMMRelat, ARMAsupp, leysalto}) that are needed to work with such entropy solutions, which is the concept of solution that we will use in order to deal with our specific class of flux-saturated equations \eqref{template2} in the following sections. For that, we introduce functions of bounded variation, several classes of truncation functions, a suitable integration by parts formula, lower semicontinuity results for functionals defined on $BV$ and then the functional calculus itself. This allows to introduce the concept of entropy solutions and to state an existence and uniqueness result for such. We conclude the section with a comparison principle for sub- and super- solutions.

\subsection{Functions of bounded variation and some generalizations}
\label{sect:bv}

Denote by ${\mathcal L}^d$ and ${\mathcal H}^{d-1}$ the
$d$-dimensional Lebesgue measure and the $(d-1)$-dimensional
Hausdorff measure in $\R^d$, respectively. 

Recall that if $\Omega$ is an open subset of $\R^d$, a function $u
\in L^1(\Omega)$ whose gradient $Du$ in the sense of distributions
is a vector valued Radon measure with finite total variation in
$\Omega$ is called a function of bounded variation. The
class of such functions will be denoted by $BV(\Omega)$.  For $u
\in BV(\Omega)$, the vector measure $Du$ decomposes into its
absolutely continuous and singular parts $Du = D^{ac} u + D^s u$.
Then $D^{ac} u = \nabla u \ \L^d$, where $\nabla u$ is the
Radon--Nikodym derivative of the measure $Du$ with respect to the
Lebesgue measure $\L^d$. We also split $D^su$ in two parts: The jump part $D^j u$ and the Cantor part $D^c u$. 

We say that $x \in \Omega$ is an approximate jump point of $u$ if there exist $u^+(x) \neq u^-(x) \in \R$ and $\eta_u(x) \in  \mathbb{S}^{d-1}$ such that
$$
\lim_{\rho \searrow 0} \frac{1}{\L(B_\rho^+(x,\eta_u(x)))} \int_{B_\rho^+(x,\eta_u(x))} |u(y)-u^+(x)| \, dy = 0
$$
and
$$
\lim_{\rho \searrow 0} \frac{1}{\L(B_\rho^-(x,\eta_u(x)))} \int_{B_\rho^-(x,\eta_u(x))} |u(y)-u^-(x)| \, dy = 0,
$$
where
$$
B_\rho^+(x,\eta_u(x)) = \{y\in B(x,\rho)/ (y-x)\cdot \eta_u(x)>0\} 
$$
and
$$
B_\rho^-(x,\eta_u(x)) = \{y\in B(x,\rho)/ (y-x)\cdot \eta_u(x)<0\}.
$$
We denote by $J_u$ the set of approximate jump points. It is
well known (see for instance \cite{Ambrosio}) that 
$$
D^j u = (u^+
- u^-) \nu_u \H^{d-1} \res J_u,
$$ 
with $\nu_u(x) =\frac{Du}{\vert D u \vert}(x)$, 
being $\frac{Du}{\vert D u \vert}$
the Radon--Nikodym derivative of $Du$ with respect to its total
variation $\vert D u \vert$. For further information concerning
functions of bounded variation we refer to \cite{Ambrosio}.

\subsection{Several classes of truncation functions}
We will use in the sequel a number of different truncation functions. For $a <
b$ and $l \in \R$, let $T_{a,b}(r) := \max \{\min \{b,r\},a\}$, $ T_{a,b}^l =  T_{a,b}-l$.
We denote \cite{ACMEllipticFLDE,ACMMRelat,ARMAsupp}
\[
\begin{split}
\mathcal T_r & := \{ T_{a,b} \ : \ 0 < a < b \},  
\\
\mathcal{T}^+ & := \{ T_{a,b}^l \ : \ 0 < a < b ,\, l\in \R, \, T_{a,b}^l\geq 0 \},  
\\
\mathcal{T}^- & := \{ T_{a,b}^l \ : \ 0 < a < b ,\, l\in \R, \, T_{a,b}^l\leq 0 \}.  
\end{split}
\]
Given any function $w$ and $a,b\in\R$ we shall use the notation
$\{w\geq a\} = \{x\in \R^d: w(x)\geq a\}$, $\{a \leq w\leq b\} =
\{x\in \R^d: a \leq w(x)\leq b\}$, and similarly for the sets $\{w
> a\}$, $\{w \leq a\}$, $\{w < a\}$, etc.

We need to consider the following function space
$$
TBV_{\rm r}^+(\R^d):= \left\{ w \in L^1(\R^d)^+  \ :  \ \ T_{a,b}(w) - a \in BV(\R^d), \
\ \forall \ T_{a,b} \in \mathcal T_r \right\}.
$$
Using the chain rule for BV-functions (see for instance
\cite{Ambrosio}), one can give a sense to $\nabla u$ for a
function $u \in TBV^+(\R^d)$ as the unique function $v$ which
satisfies
\begin{equation*}\label{E1WRN}
\nabla T_{a,b}(u) = v \1_{\{a < u  < b\}} \ \ \ \ \ {\mathcal
L}^d-a.e., \ \ \forall \ T_{a,b} \in \mathcal{T}_r.
\end{equation*}
We refer to \cite{6son6} for details.

Let us denote by ${\mathcal P}$ the set of Lipschitz continuous
functions $p : [0, +\infty) \rightarrow \R$ satisfying
$p^{\prime}(s) = 0$ for $s$ large enough. We write ${\mathcal
P}^+:= \{ p \in {\mathcal P} \ : \ p \geq 0 \}$.

We define \cite{leysalto} $\mathcal{TSUB}$ as the class of functions $S,T \in \mathcal{P}$ such that 
$$
S\ge 0, \quad S'\ge 0 \quad \mbox{and} \quad T\ge 0, \quad T'\ge 0
$$
and $p(r)=\tilde{p}(T_{a,b}(r))$ for some $0<a<b$, being $\tilde{p}$ differentiable in a neighborhood of $[a,b]$ and $p=S,T$. Similarly, we introduce $\mathcal{TSUPER}$ as the class of functions $S,T \in \mathcal{P}$ such that 
$$
S\le 0, \quad S'\ge 0 \quad \mbox{and} \quad T\ge 0, \quad T'\le 0
$$
and $p(r)=\tilde{p}(T_{a,b}(r))$ for some $0<a<b$, being $\tilde{p}$ differentiable in a neighborhood of $[a,b]$ and $p=S,T$.

Finally, we let $J_q(r)$ denote the primitive of $q$ for any real function $q$; i.e. 
$$\displaystyle J_q(r):=\int_0^r q(s)\,ds.$$
\subsection{A generalized Green's formula}
Assume that $\Omega$ is an open bounded set of $\R^d$ with Lipschitz continuous boundary. Let $p\ge 1$ and $p'$ its dual exponent. Following \cite{Anzellotti1}, let us denote
\begin{equation}
\nonumber
X_{p}(\Omega) = \{ \z \in L^{\infty}(\Omega, \R^d):
\div(\z)\in L^p(\Omega) \}.
\end{equation}
If $\z \in X_{p}(\Omega)$ and $w \in BV(\Omega)\cap L^{p'}(\Omega) $,
we define the functional $(\z\cdot Dw): \mathcal{C}^{\infty}_{c}(\Omega)
\rightarrow \R$ by the formula
\begin{equation}
\nonumber
\langle (\z \cdot Dw),\varphi\rangle := - \int_{\Omega} w \, \varphi \,
\div(\z) \, dx - \int_{\Omega} w \, \z \cdot \nabla \varphi \, dx.
\end{equation}
Then $(\z \cdot Dw)$ is a Radon measure in $\Omega$ \cite{Anzellotti1}, and
\begin{equation}
\nonumber
\int_{\Omega} (\z \cdot Dw) = \int_{\Omega} \z \cdot \nabla w \, dx, \ \
\ \ \ \forall \ w \in W^{1,1}(\Omega) \cap L^{\infty}(\Omega).
\end{equation}
We denote by
$(\z \cdot Dw)^{ac}$, $(\z \cdot Dw)^{s}$ its absolutely continuous and singular parts with respect to $\mathcal{L}^d$.
One has that $(\z \cdot Dw)^{s}$ is absolutely continuous with respect to
$D^{s}w$ and $(\z \cdot Dw)^{ac}=\z\cdot \nabla w$. Moreover, $(\z \cdot Dw)$ is
absolutely continuous with respect to $\vert Dw \vert$ \cite{Anzellotti1}.

The weak trace on $\partial \Omega$ of the normal component of $\z \in X_p(\Omega)$ is defined in \cite{Anzellotti1}. More precisely, it is proved that there exists a linear operator $\gamma : X_p(\Omega)\rightarrow L^\infty (\partial \Omega) $ such that $\|\gamma(\z)\|_\infty\le \|\z\|_\infty$ and $\gamma(\z)(x) = \z(x) \cdot \nu^\Omega(x)$ for all $x \in \partial \Omega$ --being $\nu^\Omega(x)$ the normal vector at $x$ which points outwards--,  provided that $\z \in C^1(\bar \Omega,\R^d)$. We shall denote $\gamma(\z)(x)$ by  $[\z \cdot \nu^\Omega](x)$. Moreover, the following Green's formula, relating the function $[\z\cdot \nu^\Omega]$ and the measure $(\z \cdot Dw)$, for $\z\in X_p(\Omega)$ and $w\in BV(\Omega)\cap L^{p'}(\Omega)$, is proved in \cite{Anzellotti1}
$$
\int_\Omega w \, \div(\z)\, dx + \int_\Omega (\z \cdot Dw) = \int_{\partial \Omega} [\z \cdot \nu^\Omega]w \, d \mathcal{H}^{d-1}.
$$

\subsection{Functionals defined on BV}\label{sect:functionalcalculus}

In order to define the notion of entropy solutions of
(\ref{DirichletproblemP}) we
need a functional calculus defined on functions whose truncations
are in $BV$. For that we need to introduce some functionals defined on functions of bounded variation \cite{ACMEllipticFLDE,ACMMRelat}.

Let $\Omega$ be an open subset of $\R^d$. Let $g: \Omega \times \R
\times \R^d \rightarrow [0, \infty)$ be a Borel function such that
\begin{equation*}\label{LGRWTH}
C(x) \vert \xi \vert - D(x) \leq g(x, z, \xi)  \leq M'(x) + M
\vert \xi \vert
\end{equation*}
for any $(x, z, \xi) \in \Omega \times \R \times \R^d$, $\vert
z\vert \leq R$, and any $R>0$, where $M$ is a positive constant
and  $C,D,M' \geq 0$ are bounded Borel functions which may depend
on $R$. Assume that $C,D,M' \in L^1(\Omega)$. Following Dal Maso \cite{Dalmaso} we consider the
functional:
\begin{equation}
\nonumber
\begin{array}{ll}
\displaystyle {\mathcal R}_g(u) := \int_{\Omega} g(x,u(x), \nabla
u(x)) \, dx + \int_{\Omega} g^0 \left(x,
\tilde{u}(x),\nu_u(x) \right) \,  d\vert D^c
u \vert \\ \\
\qquad \qquad \displaystyle + \int_{J_u} \left(\int_{u_-(x)}^{u_+(x)}
g^0(x, s, \nu_u(x)) \, ds \right)\, d \H^{d-1}(x),
\end{array}
\end{equation}
for $u \in BV(\Omega) \cap L^\infty(\Omega)$, being $\tilde{u}$
the approximated limit of $u$ \cite{Ambrosio}. The recession
function $g^0$ of $g$ is defined by
\begin{equation}
\label{Asimptfunct}
 g^0(x, z, \xi) = \lim_{t \to 0^+} t\, g \left(x, z, \xi/t
 \right).
\end{equation}
It is convex and homogeneous of degree $1$ in $\xi$.

In case that $\Omega$ is a bounded set, and under standard
continuity and coercivity assumptions,  Dal Maso proved in
\cite{Dalmaso} that ${\mathcal R}_g(u)$ is $L^1$-lower semi-continuous
for $u \in BV(\Omega)$. A very general result about the
$L^1$-lower semi-continuity of ${\mathcal R}_g$ in $BV(\R^d)$ can be found on \cite{DCFV}.

Assume now that $g:\R\times \R^d \to [0, \infty)$ is a Borel function
such that
\begin{equation}
\nonumber
C \vert \xi \vert - D \leq g(z, \xi)  \leq M(1+ \vert \xi \vert)
\qquad \forall (z,\xi)\in \R^d, \, \vert z \vert \leq R,
\end{equation}
for any $R > 0$ and for some constants  $C,D,M \geq 0$ which may
depend on $R$. Assume also that
\begin{equation}
\nonumber
\1_{\{u\leq a\}} \left(g(u(x), 0) - g(a, 0)\right), \1_{\{u \geq b\}} \left(g(u(x),
0) - g(b, 0) \right) \in L^1(\R^d),
\end{equation}
for any $u\in L^1(\R^d)^+$. Let $u \in TBV_{\rm r}^+(\R^d)  \cap
L^\infty(\R^d)$  and $T = T_{a,b}-l\in {\mathcal T}^+ $. For each
$\phi\in \mathcal{C}_c(\R^d)$, $\phi \geq 0$, we define the Radon measure
$g(u, DT(u))$ by
\begin{equation}
\label{FUTab}
\begin{array}{c}
\displaystyle \langle g(u, DT(u)), \phi \rangle : = {\mathcal R}_{\phi g}(T_{a,b}(u))+
\displaystyle\int_{\{u \leq a\}} \phi(x)
\left( g(u(x), 0) - g(a, 0)\right) \, dx 
\\
 \displaystyle
\displaystyle  + \int_{\{u \geq b\}} \phi(x)
\left(g(u(x), 0) - g(b, 0) \right) \, dx.
\end{array}
\end{equation}
If $\phi\in \mathcal{C}_c(\R^d)$, we write $\phi = \phi^+ -
\phi^-$ 
and we define 
$$\langle g(u, DT(u)), \phi \rangle : =
\langle g(u, DT(u)), \phi^+ \rangle- \langle g(u, DT(u)), \phi^- \rangle.$$

Note that the following is shown in  \cite{DCFV}: if $g(z,\xi)$ is continuous in $(z,\xi)$, convex
in $\xi$ for any $z\in \R$, and $\phi \in \mathcal{C}^1(\R^d)^+$ has compact
support, then  $\langle g(u, DT(u)), \phi \rangle$ is lower
semi-continuous in $TBV^+(\R^d)$ with respect to the 
$L^1(\R^d)$-convergence.

We can now define the required functional calculus (see
\cite{ACMEllipticFLDE,ACMMRelat,leysalto}). Let $S \in  \mathcal{P}^+$, $T \in \mathcal{T}^+$.
We assume that
$u \in TBV_{\rm r}^+(\R^d) \cap L^\infty(\R^d)$ and 
$$
\1_{\{u\leq a\}} S(u)\left(g(u(x), 0) - g(a, 0)\right), \1_{\{u \geq b\}} S(u)\left(g(u(x),
0) - g(b,0) \right) \in L^1(\R^d).
$$
Then we define $g_S(u,DT(u))$ as the Radon
measure given by (\ref{FUTab}) with $g_{S}(z,\xi) = S(z)
g(z,\xi)$. 

Let us introduce $h: \R \times \R^d \rightarrow \R$ defined by
\begin{equation}
\label{hdef}
h(z,\xi):=\a(z,\xi) \xi,
\end{equation}
being $\a$ the flux in \eqref{DirichletproblemP}. Under suitable assumptions on $\a$, the measure $h_S(u,DT(u))$ will make sense according to the previous functional calculus; see Section \ref{wp}.

\subsection{Entropy solutions of the evolution problem}\label{sect:defESpp}

Let $L^1_{w}(0,T,BV(\R^d))$  be the space of weakly$^*$
measurable functions $w:[0,T] \to BV(\R^d)$ (i.e., $t \in [0,T]
\to \langle w(t),\phi \rangle$ is measurable for every $\phi$ in the predual
of $BV(\R^d)$) such that $\int_0^T \| w(t)\|_{BV} \, dt$ is finite.
Observe that, since $BV(\R^d)$ has a separable predual (see
\cite{Ambrosio}), it follows easily that the map $t \in [0,T]\to
\Vert w(t) \Vert_{BV}$ is measurable. By  $L^1_{loc, w}(0, T,
BV(\R^d))$ we denote the space of weakly$^*$ measurable functions
$w:[0,T] \to BV(\R^d)$ such that the map $t \in [0,T]\to \Vert
w(t) \Vert_{BV}$ is in $L^1_{loc}(0, T)$.

\begin{Definition} 
\label{def:espb}
Assume that $0 \le u_0 \in L^1(\R^d)\cap L^\infty(\R^d)$. A
measurable function $u: (0,T)\times \R^d \rightarrow \R$ is an
{\it entropy  solution}   of (\ref{DirichletproblemP}) in $Q_T$ if $u \in \mathcal{C}([0, T]; L^1(\R^d))$,
$T_{a,b}(u(\cdot)) - a \in L^1_{loc, w}(0, T, BV(\R^d))$ for all
$0 < a < b$, and
\begin{itemize}
\item[(i)] \ $u_t = {\rm div} \, \a(u(t), \nabla u(t))$ in $\mathcal{D}^\prime(Q_T)$,
\item[(ii)]  $u(0) = u_0$, and 
\item[(iii)] \ the following
inequality is satisfied
\begin{equation}
\begin{array}{c}
 \label{eineq}
\displaystyle \int_0^T\int_{\R^d} \phi
h_{S}(u,DT(u)) \, dt + \int_0^T\int_{\R^d} \phi h_{T}(u,DS(u)) \, dt
\\
\leq  \displaystyle\int_0^T\int_{\R^d} \Big\{ J_{TS}(u(t)) \phi^{\prime}(t) - \a(u(t), \nabla u(t)) \cdot \nabla \phi \
T(u(t)) S(u(t))\Big\} dxdt, 
\end{array}
\end{equation}
 for truncation functions $S,  T \in \mathcal{T}^+$, and any  smooth function $\phi$ of
 compact support, in particular  those  of the form $\phi(t,x) =
 \phi_1(t)\rho(x)$, $\phi_1\in {\mathcal D}(0,T)$, $\rho \in
 {\mathcal D}(\R^d)$.
\end{itemize}
\end{Definition}
\noindent
{This definition is a simplification of the original one in \cite{ACMMRelat}, see \cite{ACMregularity} for instance}. Note that the statements in this paragraph and the following one hold under a set of assumptions on $\a$ that are described in \cite{ACMEllipticFLDE,ACMMRelat,leysalto}, which we denote collectively by $({\rm H})$. We have the following existence and uniqueness result \cite{ACMMRelat}.

\begin{Theorem}
\label{EUTEparabolic}
 Let assumptions
$({\rm H})$ hold. Then, for any initial datum $0 \leq u_0 \in L^1(\R^d)\cap L^\infty(\R^d)$
there exists a unique entropy solution $u$ of
(\ref{DirichletproblemP}) in $Q_T $ for every $T
> 0$ such that $u(0) = u_0$.
Moreover, if $u(t)$,
$\overline{u}(t)$ are the entropy solutions corresponding to
initial data $u_0$, $\overline{u}_0 \in L^{1}(\R^d)^+$ respectively, then
\begin{equation}
\nonumber
\Vert (u(t) - \overline{u}(t))^+ \Vert_1 \leq
\Vert (u_0 - \overline{u}_0)^+ \Vert_1 \ \ \ \ \ \ {\rm for \
all} \ \ t \geq 0.
\end{equation}
\end{Theorem}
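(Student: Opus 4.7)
The statement is a citation from \cite{ACMMRelat}, so I would organize a proof around the standard nonlinear semigroup strategy that underlies the theory: first solve the associated elliptic (time-discretized) problem, then assemble a semigroup via Crandall--Liggett, and finally reconcile the resulting mild solution with the entropy formulation. Concretely, I would define the (possibly multivalued) operator $\mathcal{A}$ in $L^1(\R^d)$ by $v \in \mathcal{A}(u)$ iff $v = -\mathrm{div}\,\a(u,\nabla u)$ in the entropy sense, and then, for $\lambda>0$ and $f \in L^1\cap L^\infty$, study the resolvent equation $u+\lambda \mathcal{A}(u)\ni f$. Existence for the resolvent is obtained by approximating $\a$ by strictly monotone, coercive fluxes $\a_\varepsilon$ of $p$-Laplacian type (so that Leray--Lions/Minty theory applies), producing smooth approximate solutions $u_\varepsilon$ whose truncations $T_{a,b}(u_\varepsilon)$ are uniformly bounded in $BV(\R^d)$ by energy estimates. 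Passing $\varepsilon \to 0$ using the lower semicontinuity results of Dal Maso \cite{Dalmaso} and \cite{DCFV} for the functionals $\mathcal{R}_{\phi g}$, together with the $BV$-compactness of truncations, yields a function $u \in L^1\cap L^\infty$ satisfying the elliptic entropy inequality.

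The next step is to prove $T$-accretivity in $L^1$ of $\mathcal{A}$, i.e. the comparison estimate $\|(u_1-u_2)^+\|_1 \le \|(f_1-f_2)^+\|_1$ for solutions of the resolvent equation with data $f_1,f_2$. This is the technical heart of the argument and I would handle it by a Kruzkov-type doubling of variables adapted to the functional $h_S(u,DT(u))$: one tests the entropy inequality for $u_1$ with truncations built from $u_2$ and viceversa, adds the resulting inequalities, and exploits the convexity and $1$-homogeneity of the recession function $g^0$ together with Anzellotti's Green formula in order to recover a Kato-type inequality. Once $T$-accretivity is established, the Crandall--Liggett generation theorem produces a contraction semigroup on $L^1(\R^d)^+$ via the exponential formula $u(t)=\lim_{n\to\infty}(I+(t/n)\mathcal{A})^{-n}u_0$, giving both existence of a mild solution for every $T>0$ and the $L^1$-contraction (hence also $L^\infty$-bound and time-continuity) claimed in the statement.

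It remains to check that this mild solution is in fact an entropy solution in the sense of Definition \ref{def:espb}. Here I would use the standard fact that the implicit Euler iterates $u^n$ satisfy discrete entropy inequalities inherited from the resolvent construction, and then pass to the limit using the $L^1$-equicontinuity in time (from the semigroup property), the weak$^*$-measurability of $t \mapsto T_{a,b}(u(t))-a$ in $BV$, and again the Dal Maso/\cite{DCFV} lower semicontinuity of $\langle h_S(u,DT(u)),\phi\rangle$ with respect to $L^1$-convergence. Uniqueness of the entropy solution for each given $u_0$ then follows a posteriori by applying the doubling argument directly at the level of Definition \ref{def:espb} (using test pairs $(S,T)\in\mathcal{TSUB}\times\mathcal{TSUPER}$), which subsumes and sharpens the contraction estimate.

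The main obstacle is the doubling-of-variables step producing $T$-accretivity, because the measure $h_S(u,DT(u))$ has a genuinely singular part carried by the jump set $J_u$ and potentially by the Cantor part $D^c u$, so the usual chain rule is unavailable and one has to manipulate Radon measures supported on lower-dimensional sets. One must carefully split the contributions of $D^{ac}u$, $D^cu$ and $D^ju$, identify the lateral traces of the Anzellotti pairing $[\z\cdot\nu^{J_u}]$ on the jump set, and use that $(\z\cdot Dw)$ is absolutely continuous with respect to $|Dw|$ to control the singular terms by the linear-growth bound on $\a$. A secondary difficulty is verifying the integrability conditions on $\chi_{\{u\le a\}}(g(u,0)-g(a,0))$ that make the functional calculus of Section \ref{sect:functionalcalculus} meaningful along the approximation sequence; this is what forces one to work with the truncated family $\mathcal{T}^+$ rather than with $u$ itself.
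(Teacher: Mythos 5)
Your proposal follows exactly the strategy the paper attributes to \cite{ACMMRelat}: existence via the resolvent problem and Crandall--Liggett's generation theorem, and uniqueness together with the $L^1$-contraction via Kruzhkov's doubling-of-variables technique adapted to the entropy formulation. The paper itself only cites this result and summarizes the method in one sentence, so your (correct) expanded sketch is essentially the same approach, merely spelled out in more detail.
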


Existence of entropy solutions is proved by using Crandall-Liggett's scheme \cite{CrandallLiggett}
and uniqueness is proved using Kruzhkov's doubling variables technique \cite{Kruzhkov,Carrillo1}.

\subsection{Sub- and super-solutions}
In order to use the comparison principles introduced in \cite{ARMAsupp} a certain technical condition is required.
\begin{Assumptions}
\label{compas}
Let the function $h$ {defined by} (\ref{hdef}) satisfy
$$
h(z,\xi) \le M(z) |\xi|
$$
for some positive continuous function $M(z)$ and for any $(z,\xi)\in \R \times \R^d$.
\end{Assumptions}

\begin{Definition}{\cite{ARMAsupp}}
\label{subsuper}
A measurable function $u : (0,T) \times \R^d \rightarrow \R_0^+$ is an entropy sub- (resp. super-) solution of \eqref{DirichletproblemP} if $u \in C([0,T],L^1(\R^d))$, $\a(u,\nabla u)\in L^\infty(Q_T)$, $T_{a,b}(u) \in L_{loc,w}^1(0,T,BV(\R^d))$ for every $0<a<b$ and the following inequality is satisfied:
\begin{equation}
\begin{array}{c}
\displaystyle \int_0^T\int_{\R^d} \phi
h_{S}(u,DT(u)) \, dt + \int_0^T\int_{\R^d} \phi h_{T}(u,DS(u)) \, dt
\\
\geq  \displaystyle\int_0^T\int_{\R^d} \Big\{ J_{TS}(u(t)) \phi^{\prime}(t) - \a(u(t), \nabla u(t)) \cdot \nabla \phi \
T(u(t)) S(u(t))\Big\} dxdt,  
\label{seineq}
\end{array}
\end{equation}
(resp. with $\le$) 
for any $\phi \in \mathcal{D}(Q_T)^+$ and any truncations $T \in \mathcal{T}^+,\ S \in \mathcal{T}^-$.
\end{Definition}
This implies that 
\begin{equation}
\label{sinside}
u_t \le \div \a(u,\nabla u) \quad \mbox{in}\ \mathcal{D}'(Q_T)
\end{equation}
(resp. with $\ge$). The following comparison principle was shown in \cite{ARMAsupp}:
\begin{Theorem}
\label{theocomp}
Let assumptions (H) and Assumptions \ref{compas} hold. Given an entropy solution $u$ of \eqref{DirichletproblemP} corresponding to an initial datum $0\le u_0\in (L^\infty \cap L^1)(\R^d)$, the following statements hold true:
\begin{enumerate}
\item if $\overline{u}$ is a super-solution of \eqref{DirichletproblemP} such that $\overline{u}(t) \in BV(\R^d)$ for a.e. $t \in (0,T)$, then
$$
\|(u(t)-\overline{u}(t))^+\|_1 \le \|(u_0-\overline{u}(0))^+\|_1 \quad \forall t \in [0,T],
$$
\item if $\overline{u}$ is a sub-solution of \eqref{DirichletproblemP} such that $\overline{u}(t) \in BV(\R^d)$ for a.e. $t \in (0,T)$, then
$$
\|(\overline{u}(t)-u(t))^+\|_1 \le \|(\overline{u}(0)-u_0)^+\|_1 \quad \forall t \in [0,T].
$$
\end{enumerate}
\end{Theorem}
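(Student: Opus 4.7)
The plan is to adapt Kruzhkov's doubling of variables method to the functional-calculus formulation of entropy inequalities developed in \cite{ACMMRelat,ARMAsupp}. The two parts of the statement are symmetric under interchange of $u$ and $\overline{u}$, so I focus on part 1 and aim to derive
$$
\|(u(t)-\overline{u}(t))^+\|_1\le \|(u_0-\overline{u}(0))^+\|_1.
$$
The starting point is to duplicate variables, writing $u=u(t,x)$ and $\overline{u}=\overline{u}(s,y)$, and to choose a product test function $\phi(t,x,s,y)=\rho_\epsilon(t-s)\rho_\epsilon(x-y)\,\eta(t,x)$, with $\rho_\epsilon$ a standard mollifier and $0\le \eta \in \mathcal{D}((0,T)\times \R^d)$.

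Next, I would test the entropy inequality \eqref{eineq} for $u$ with truncations $T=T_{\overline{u}(s,y),\,\overline{u}(s,y)+b}^{\overline{u}(s,y)}\in \mathcal{T}^+$, so that $T(u)$ approximates $(u-\overline{u})^+$ as $b\to \infty$, paired with an auxiliary $S\in \mathcal{T}^+$ cutting off the essential range of $u$. Simultaneously I would apply the super-solution inequality \eqref{seineq} for $\overline{u}$ in the variables $(s,y)$ with an analogous truncation depending on $u(t,x)$, using the pair $(T,S)\in \mathcal{T}^+\times \mathcal{T}^-$ allowed by Definition \ref{subsuper}. Adding the two inequalities, the time-derivative pieces combine into an integrand approximating $(u-\overline{u})^+\partial_t\eta$ modulo mollification errors, while the generalised Green identity together with the BV regularity of $\overline{u}(s,\cdot)$ lets me identify lateral traces across $J_{\overline{u}(s)}$ and handle the $D^c\overline{u}$ contributions.

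After the diagonal limit $\epsilon\to 0$, I would specialise $\eta$ to an approximation of $\chi_{[t_1,t_2]}(t)$ in time and of $1$ in space, and use $u\in C([0,T];L^1(\R^d))$ to send $t_1\to 0^+$ and $t_2\to t$. The surviving inequality reads, schematically,
$$
\int_{\R^d}(u(t)-\overline{u}(t))^+\,dx\;-\;\int_{\R^d}(u_0-\overline{u}(0))^+\,dx\;\le\;-\mathcal{E},
$$
with a dissipation $\mathcal{E}\ge 0$ generated by the convexity of $h$ in \eqref{hdef} and by the compatibility between $(T,S)$ that makes the combined measures $h_S(u,DT(u))+h_T(u,DS(u))$ minus their $\overline{u}$ analogue carry the right sign.

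The main obstacle is the control of the off-diagonal flux terms produced by the doubling, namely those in which the truncation in one pair of variables depends on the value of the other function. These terms do not factor, and they carry singular parts of $Du$ and $D\overline{u}$ which do not vanish automatically when $\epsilon\to 0$. This is exactly where Assumption \ref{compas} enters: the pointwise bound $h(z,\xi)\le M(z)|\xi|$ dominates such cross contributions by $M(\|u\|_\infty\vee\|\overline{u}\|_\infty)$ times the total variation measure $|D\overline{u}|$ (resp.\ $|Du|$), which is integrable against the mollified test function and is handled by Lebesgue differentiation in the diagonal limit. A closely related point is the lower semicontinuity of the dissipation with respect to the doubling limit; for this the representation \eqref{FUTab} together with the recession formalism \eqref{Asimptfunct} is essential, since it guarantees that the singular entropy contributions retain their non-negative sign in the limit, so that the estimate closes.
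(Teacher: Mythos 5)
The paper itself contains no proof of this statement: Section \ref{ACM} is explicitly a summary of imported material, and Theorem \ref{theocomp} is quoted from \cite{ARMAsupp}, the only ``proof'' in the text being that citation. Your outline does match the broad strategy of the proof in that reference (Kruzhkov doubling of variables combined with the generalized Green formula and the $BV$ functional calculus of Section \ref{sect:functionalcalculus}), so the approach is not wrong in spirit; the issue is that, taken as a proof, it has concrete gaps at exactly the points where the degenerate setting makes the argument hard.

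The most serious one is the choice of truncations. The inequalities \eqref{eineq} and \eqref{seineq} are only assumed for genuine truncation functions $T_{a,b}-l$ with fixed constants $0<a<b$; an object such as $T^{\overline{u}(s,y)}_{\overline{u}(s,y),\,\overline{u}(s,y)+b}$ is not admissible as written, both because its lower endpoint can be $0$ (violating $a>0$, which is essential near the degeneracy $u=0$) and because it depends on the doubled variable, which is not covered by Definitions \ref{def:espb} and \ref{subsuper} without a separate approximation argument. The actual proof must work with $T_{a,b}$, $a>0$, carry along the extra terms $\int_{\{u\le a\}}\phi\,(g(u,0)-g(a,0))\,dx$ generated by \eqref{FUTab}, and only send $a\to 0^+$ at the end; none of this appears in your sketch. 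Second, the sign bookkeeping is asserted rather than shown: a super-solution only satisfies \eqref{seineq} for pairs $(T,S)\in\mathcal{T}^+\times\mathcal{T}^-$, so adding the two inequalities does not automatically produce a one-signed dissipation term; reconciling the admissible truncation classes on the two sides is precisely where the comparison proof departs from the uniqueness proof, and it is also where Assumption \ref{compas} is actually consumed --- your account of its role (domination of cross terms by $M$ times the total variation) is plausible but is a guess, not a verification. Finally, the nonnegativity of the dissipation ``by convexity of $h$'' requires invoking the lower semicontinuity results for the Dal Maso functionals quoted in Section \ref{sect:functionalcalculus} for the specific combinations $h_S(u,DT(u))+h_T(u,DS(u))$, which in turn needs the linear growth and coercivity bounds on $h_S$ to be checked. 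If the intention is a self-contained proof rather than the citation the paper uses, these are the steps that must be written out.
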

Some extensions of this result have been shown in \cite{Giacomelli}.

\section{Well-posedness of the given class of models}
\label{wp}
The goal of this paragraph is to show that the class of equations \eqref{template2} (and more specifically their flux $\a(z,\xi)$) satisfies the set of assumptions (H) given in \cite{ACMEllipticFLDE, ACMMRelat}. This will be the case provided that Assumptions \ref{as1} and \ref{asphi} below hold true. Hence Theorem \ref{EUTEparabolic} would apply, ensuring well-posedness for the class \eqref{template2}.

Comparing \eqref{template2} with \eqref{DirichletproblemP}, we let $\a:\R \times \R^d \rightarrow \R_0^+$ be defined as 
$$
\a(z,\xi):= \varphi(z)\, \psi(\xi/|z|)\quad \mbox{if} \ z\neq 0,\quad \a(z=0,\xi) := 0.
$$
It follows from the previous that 
\begin{equation}
\a(z,0)=0 \quad \mbox{for every}\ z \in \R.
\label{cero}
\end{equation} 
\begin{Lemma}
\label{Continuidad}
There holds that $\frac{\partial \a}{\partial \xi_i} \in C((\R \times \R^d)\backslash\{0,0\},\R^d)$ for each $i=1,\ldots,d$.
\end{Lemma}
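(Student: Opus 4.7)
The plan is to verify continuity separately on the open set $\{z\neq 0\}\times\R^d$ and at the remaining points $(0,\xi_0)$ with $\xi_0\neq 0$, where the formula for $\a$ is only defined by continuation and the derivative formula degenerates.

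First I would compute, for $z\neq 0$ and any $\xi\in\R^d$, using the chain rule
\begin{equation*}
\frac{\partial \a^{(j)}}{\partial \xi_i}(z,\xi)\;=\;\frac{\varphi(z)}{|z|}\,\partial_i\psi^{(j)}\!\left(\xi/|z|\right),\qquad i,j=1,\ldots,d.
\end{equation*}
Since $\varphi$ is Lipschitz (Assumption \ref{asphi}.\emph{\ref{cnt}}), $z\mapsto 1/|z|$ is continuous away from zero, and $\psi\in C^1(\R^d,\R^d)$ by Assumption \ref{as1}.\emph{\ref{suave}}, the right-hand side is a composition of continuous functions on $\{z\neq 0\}\times\R^d$. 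So continuity is immediate there.

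Next, fix $\xi_0\neq 0$. Because $\a(0,\xi)=0$ for every $\xi$ by definition, $\a(0,\cdot)$ is identically zero as a function of $\xi$, so $\partial\a^{(j)}/\partial\xi_i$ exists at $(0,\xi_0)$ and equals $0$. It remains to prove that
\begin{equation*}
\lim_{(z,\xi)\to(0,\xi_0)}\frac{\varphi(z)}{|z|}\,\partial_i\psi^{(j)}(\xi/|z|)=0.
\end{equation*}
Along any such sequence $|\xi|$ stays bounded below by $|\xi_0|/2$ (say), so setting $r=\xi/|z|$ one has $|r|=|\xi|/|z|\to\infty$. The decay assumption on $D\psi$ now enters: by Assumption \ref{as1}.\emph{\ref{caida}} when $d>1$, and by Assumption \ref{as1}.\emph{\ref{1dcaida}} when $d=1$, there exist $R>0$ and $C>0$ such that
\begin{equation*}
\bigl|\partial_i\psi^{(j)}(r)\bigr|\;\leq\;\frac{C}{|r|}\qquad\text{whenever }|r|\geq R.
\end{equation*}
Substituting $|r|=|\xi|/|z|$ yields, for $(z,\xi)$ close enough to $(0,\xi_0)$,
\begin{equation*}
\left|\frac{\varphi(z)}{|z|}\,\partial_i\psi^{(j)}(\xi/|z|)\right|\;\leq\;\frac{\varphi(z)}{|z|}\cdot\frac{C|z|}{|\xi|}\;=\;\frac{C\,\varphi(z)}{|\xi|}\;\leq\;\frac{2C\,\varphi(z)}{|\xi_0|}.
\end{equation*}
Since $\varphi$ is continuous with $\varphi(0)=0$ (Assumption \ref{asphi}.\emph{\ref{zzero}}), the right-hand side tends to zero, which gives the desired limit and proves continuity at $(0,\xi_0)$.

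The only genuine subtlety, and the step I expect to require most care, is the second one: the factor $1/|z|$ blows up while $D\psi$ is evaluated at an argument diverging to infinity, and the cancellation between these two effects is precisely what the decay rate $\|D\psi\|_\infty(r)=O(1/|r|)$ is designed to produce. Once this balance is visible, the vanishing of $\varphi$ at the origin closes the estimate and delivers continuity on $(\R\times\R^d)\setminus\{(0,0)\}$. The point $(0,0)$ is (and must be) excluded, because along $\xi=z\,r_0$ with $r_0$ fixed, the expression $\varphi(z)\,\partial_i\psi^{(j)}(r_0)/|z|$ has a nonzero limit governed by $\varphi'(0)$.
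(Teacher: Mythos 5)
Your proof is correct and follows essentially the same route as the paper: the explicit formula $\frac{\partial \a^{(j)}}{\partial \xi_i}(z,\xi)=\frac{\varphi(z)}{|z|}\partial_i\psi^{(j)}(\xi/|z|)$ for $z\neq 0$, continuity there from Assumptions \ref{as1}.\emph{\ref{suave}}, and the vanishing limit at $(0,\xi_0)$, $\xi_0\neq 0$, obtained by balancing the $1/|z|$ blow-up against the decay \ref{as1}.\emph{\ref{caida}}/\emph{\ref{1dcaida}} of $D\psi$ together with Assumptions \ref{asphi}.\emph{\ref{zzero}}. You supply somewhat more detail than the paper (existence of the derivative at $z=0$, and the counterexample along $\xi=z r_0$ showing why $(0,0)$ must be excluded), but the argument is the same.
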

\begin{proof}
This is clear except maybe at $z=0$. Note that
$$
\frac{\partial \a^{(j)}}{\partial \xi_i} (z,\xi)= \frac{\varphi(z)}{|z|} \partial_i \psi^{(j)}(\xi/|z|)\quad \mbox{if}\ z\neq 0.
$$
Then
$$
\lim_{z \to 0}\frac{\partial \a}{\partial \xi_i} =0\quad \mbox{for any}\, \xi\neq 0,
$$
 thanks to Assumptions \ref{asphi}.\emph{\ref{zzero}} and {Assumptions \ref{as1}.\emph{\ref{caida}}/\emph{\ref{1dcaida}}}.
\end{proof}
\begin{Remark}
{\rm Note that assumption (H2) in \cite{ACMEllipticFLDE, ACMMRelat} requires $\frac{\partial \a}{\partial \xi_i}$ to be continuous at every point of $\R \times \R^d$ in order to apply their well-posedness results. Nevertheless, it can be shown that the continuity proved in Lemma \ref{Continuidad} suffices.}
\end{Remark}
Now we look for some sort of potential function $f(z,\xi)$ such that $\nabla_\xi f(z,\xi)=\a(z,\xi)$. We introduce the Lagrangian 
$$
f(z,\xi):= |z| {\varphi}(z) \Phi(\xi/|z|) \quad \mbox{if} \ z\neq 0, \quad f(z=0,\xi) := 0,
$$
being $\Phi$ a potential for $\psi$ such that $\Phi(0)=0$. This potential is uniquely given by Poincare's Lemma:
$$
\Phi(r):=\int_{\gamma_r} \psi \, d\sigma
$$
with
\begin{equation}
\label{path}
\gamma_r: [0,1]\rightarrow \R^d,\quad \gamma_r(t) := tr\, \mbox{for any}\ r \in \R^d.  
\end{equation}
Thanks to Assumptions \ref{as1}.\emph{\ref{convex}}, $\Phi$ is convex. Hence the vector field $\xi \mapsto \psi(\xi)$ is monotone. Using Assumptions \ref{as1}.\emph{\ref{nulo}} (or equivalently Lemma \ref{tangential-radial}) we get that
\begin{equation}
\label{enhancedparity}
r \,\psi(r) \ge 0\quad \forall r \in \R^d.
\end{equation}
It follows easily that $\Phi \ge 0$.
Moreover $\xi \mapsto \Phi(\xi/|z|)$ is convex and so
\begin{equation}
\label{pr3}
\xi \mapsto f(z,\xi)\quad \mbox{ is convex}.
\end{equation}

\begin{Lemma}
 There holds that
 $
 \Phi(r)=|r| + o(|r|)$ for  $|r|\gg 1$.
 
\end{Lemma}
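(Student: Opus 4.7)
My plan is to express $\Phi(r)$ directly via the radial path used to define it and then exploit Assumption \ref{as1}.\emph{\ref{infinito}} to control the deviation from $|r|$.

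First, from the definition \eqref{path} and the path-integral formula for $\Phi$, I would write
$$
\Phi(r) = \int_0^1 \psi(tr)\cdot r\, dt,\qquad \Phi(r)-|r| = \int_0^1 \bigl[\psi(tr)\cdot r - |r|\bigr]\,dt.
$$
The integrand can be rewritten as $|r|\,\bigl((\psi(tr)-r/|r|)\cdot (r/|r|)\bigr)$, and Cauchy--Schwarz together with Assumption \ref{as1}.\emph{\ref{infinito}} yields
$$
\bigl|\psi(tr)\cdot r-|r|\bigr| \le |r|\,\bigl|\psi(tr)-r/|r|\bigr|\le |r|\,d(t|r|)
$$
for every $t>0$. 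Note that $r/|r| = (tr)/|tr|$, so the bound in Assumption \ref{as1}.\emph{\ref{infinito}} applies with argument $tr$.

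Then I would take absolute values inside the integral and change variables $s=t|r|$ to obtain
$$
\bigl|\Phi(r)-|r|\bigr| \le |r|\int_0^1 d(t|r|)\,dt = \int_0^{|r|} d(s)\,ds.
$$
Finally, using that $d$ is continuous on $\R_0^+$ (hence bounded on any compact set) and $\lim_{s\to\infty} d(s)=0$, I would argue as follows: given $\epsilon>0$, choose $R$ so that $d(s)<\epsilon$ for $s\ge R$; then for $|r|>R$
$$
\frac{1}{|r|}\int_0^{|r|} d(s)\,ds \le \frac{1}{|r|}\int_0^R d(s)\,ds + \epsilon\,\frac{|r|-R}{|r|},
$$
and letting $|r|\to \infty$ gives $\limsup_{|r|\to\infty}|r|^{-1}\bigl|\Phi(r)-|r|\bigr|\le \epsilon$. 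Since $\epsilon>0$ is arbitrary, $\Phi(r)-|r| = o(|r|)$, which is the desired conclusion. No serious obstacle is anticipated; the one mildly delicate point is simply reading the radial path integral off the definition and performing the substitution, which is routine.
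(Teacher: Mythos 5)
Your proposal is correct and follows essentially the same route as the paper: write $\Phi(r)=\int_0^1\psi(tr)\cdot r\,dt$, bound the deviation from $|r|$ by $\int_0^{|r|}d(\lambda)\,d\lambda$ using Assumption \ref{as1}.\emph{\ref{infinito}}, and conclude. The only difference is that you spell out the final averaging step showing $\frac{1}{|r|}\int_0^{|r|}d(\lambda)\,d\lambda\to 0$, which the paper leaves implicit.
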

\begin{proof}
 Note that given $r\in \R^d$,
 $$
 \Phi(r)= \int_0^{1} \psi(t r) \cdot r \, dt = |r| + \int_0^1 \left(\psi(tr)- \frac{r}{|r|} \right)\cdot r \, dt.
 $$
 Clearly
 $$
 \left| \int_0^1 \left(\psi(tr)- \frac{r}{|r|} \right)\cdot r \, dt\right| \le \int_0^1 |r| d(t|r|)\, dt = \int_0^{|r|} d(\lambda) \, d\lambda
 $$
 and the result follows thanks to Assumptions \ref{as1}.\emph{\ref{infinito}}.
 \end{proof}
 
 In particular, given  $\xi \in \R^d$,
$$
\Phi( \xi/|z|) \sim  |\xi|/|z| \quad \mbox{for}\  |z|\ll 1.
$$
Thus, thanks to Assumptions \ref{asphi} we deduce that $\lim_{z \to 0} f(z,\xi)=0$. As a consequence, 
\begin{equation}
\label{pr1} f \in C(\R \times \R^d).
\end{equation}
In the same vein, we can compute (recall that $f^0$ is defined by \eqref{Asimptfunct})
\begin{equation}
\label{pr2}
f^0(z,\xi) = \lim_{t \to 0^+} t f(z,\xi/t) = \lim_{t\to 0^+} t|z|{\varphi}(z) \Phi(\xi/|t z|) ={\varphi}(z) |\xi|.
\end{equation}

A number of bounds hold for the Lagrangian. The following upper bound is easily obtained {upon using Lemma \ref{tangential-radial}.\emph{\ref{dos}}}:
\begin{equation}
\label{pr4}
f(z,\xi) = |z|{\varphi}(z)\int_0^1 \psi(t \xi/|z|) \cdot \xi/|z| \, dt\le  |z|\varphi(z) \int_0^1{|\xi|/|z|} \,dt  \le \varphi (z) (1+|\xi|).
\end{equation}
\begin{Lemma}
There exist suitable constants $C_0,D_0>0$ such that the Lagrangian satisfies the following lower bound
\begin{equation}
\label{pr5}
f(z,\xi) \ge C_0 \varphi(z) |\xi| - D_0|z| \varphi(z) \quad \mbox{for any}\ (z,\xi)\in \R \times \R^d.
\end{equation}
\end{Lemma}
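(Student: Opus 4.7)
The strategy is to reduce the desired bound to a purely scalar statement about the potential $\Phi$, namely that $\Phi(r) \ge C_0 |r| - D_0$ for all $r \in \R^d$, and then substitute $r = \xi/|z|$. Once such a bound on $\Phi$ is in hand, multiplying through by $|z|\varphi(z) \ge 0$ yields
\begin{equation*}
f(z,\xi) \;=\; |z|\varphi(z)\, \Phi(\xi/|z|) \;\ge\; C_0\,|z|\varphi(z)\cdot |\xi|/|z| - D_0\, |z|\varphi(z) \;=\; C_0\,\varphi(z)|\xi| - D_0\,|z|\varphi(z)
\end{equation*}
for $z \neq 0$, and at $z=0$ both sides vanish by \eqref{pr1} (and the fact that $\varphi(0)=0$), so the bound is trivially valid there.

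To establish $\Phi(r) \ge C_0|r| - D_0$, I would split into large and small $|r|$. The previous lemma gives $\Phi(r) = |r| + o(|r|)$ as $|r|\to\infty$, so for any $C_0 \in (0,1)$ (take $C_0 = 1/2$ for concreteness) there exists $R_0>0$ such that $\Phi(r) \ge C_0|r|$ whenever $|r| \ge R_0$. For the complementary region $|r| \le R_0$, the non-negativity of $\Phi$ (which follows from \eqref{enhancedparity} together with the integral representation $\Phi(r)=\int_0^1 \psi(tr)\cdot r\,dt$) yields $\Phi(r) \ge 0 \ge C_0|r| - C_0 R_0$. Combining the two ranges gives the scalar bound with $D_0 := C_0 R_0$.

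There is no real obstacle here beyond correctly assembling the ingredients already proved; the mild subtlety is bookkeeping the singular point $z=0$, which is handled because $\varphi(z) \to 0$ as $z \to 0$ by Assumptions \ref{asphi}.\emph{\ref{zzero}} and the term $|z|\varphi(z)$ on the right-hand side then vanishes faster than any issue coming from $\Phi(\xi/|z|)$. In particular, no further asymptotic information on $\psi$ beyond Assumptions \ref{as1}.\emph{\ref{infinito}} is needed, and the constants $C_0, D_0$ depend only on the modulus of convergence in that assumption (i.e., on the function $d$).
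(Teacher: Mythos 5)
Your proof is correct, but it reaches the key scalar inequality $\Phi(r)\ge C_0|r|-D_0$ by a different route than the paper. The paper does not use the asymptotic expansion $\Phi(r)=|r|+o(|r|)$ from the preceding lemma at all; instead it invokes the convexity of $\xi\mapsto\Phi(\xi/|z|)$ to get $\Phi(\xi/|z|)\ge \psi(\xi/|z|)\cdot\xi/|z|$, and then proves the linear lower bound for $r\cdot\psi(r)$ directly from Assumptions \ref{as1}.\emph{\ref{infinito}} (writing $r\cdot\psi(r)=|r|+r\cdot(\psi(r)-r/|r|)\ge |r|(1-d(|r|))$ for large $|r|$ and using \eqref{enhancedparity} for small $|r|$), finally multiplying by $|z|\varphi(z)$ exactly as you do. Your version is arguably more economical since it recycles the lemma just proved and needs only the non-negativity of $\Phi$, not its convexity; the paper's version is self-contained modulo the assumptions and, as a by-product, yields the coercivity estimate $\a(z,\xi)\cdot\xi\ge C_0\varphi(z)|\xi|-D_0|z|\varphi(z)$ for the flux itself, which is slightly more information than a bound on the Lagrangian alone. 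Both arguments handle $z=0$ the same way and both produce constants depending only on the decay function $d$. No gap.
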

\begin{proof}
Thanks to the convexity of $\xi \mapsto \Phi(\xi/|z|)$ we get the following inequality
$$
\Phi(\xi/|z|) \ge  \frac{\xi}{|z|} \psi( \xi/|z|) \quad \mbox{for any}\ (z,\xi)\in \R \times \R^d.
$$
Then we show that
$$
\varphi(z) \psi(\xi/|z|)\xi \ge C_0 \varphi(z) |\xi| - D_0|z| \varphi(z) \quad \mbox{for any}\ (z,\xi)\in \R \times \R^d
$$
and for some $C_0,D_0>0$, which would lead us to \eqref{pr5}. For that we extend a one-dimensional argument presented in \cite{2014}. Choose $0<C_0<1$. Note that
$$
r \psi(r) = |r| + r \left(\psi(r) - \frac{r}{|r|} \right)\ge |r|-|r| d(|r|).
$$
Hence, there exists some $\tilde r $ depending on $C_0$ such that
$$
r \psi(r) \ge C_0 |r|\quad \forall r\in \R^d\backslash B(0,\tilde r).
$$
Next, thanks to \eqref{enhancedparity} we are able to find $D_0:=C_0 \tilde r>0$ such that
$$
r \psi(r) \ge C_0 |r| -D_0\quad \forall r \in \R^d.
$$
Then we choose $r=\xi/|z|$ above and multiply both sides of the resulting inequality by $|z|\varphi(z)$ to get the desired estimate.
\end{proof}

Next we introduce $h:\R \times \R^d \rightarrow \R$ defined by \eqref{hdef} as 
\begin{equation}
\label{hours}
h(z,\xi):=\a(z,\xi) \xi = {\varphi}(z)\psi \left(\xi/|z|\right) \xi \quad \mbox{if} \ z\neq 0,\quad h(0,\xi):=0.
\end{equation}
It follows from \eqref{enhancedparity} and  Assumptions \ref{as1}.\emph{\ref{minuse}} that
\begin{equation}
\label{pr6}
h(z,\xi)\ge 0
\end{equation} 
and 
\begin{equation}
\label{pr7}
h(z,\xi)=h(z,-\xi)
\end{equation}
 for every $z,\xi \in \R$. Moreover, $h^0$ exists and coincides with $f^0$:
\begin{equation}
\label{pr8}
h^0(z,\xi)= \lim_{t \to 0^+} t {\varphi}(z) \psi \left(\frac{\xi}{|t z|}\right)\frac{\xi}{t} = \lim_{t \to 0^+}  {\varphi}(z) \xi \frac{\xi/|tz|}{|\xi/|tz||} =  {\varphi}(z) |\xi|.
\end{equation}

We have the following inequality relating $h^0$ and $\a$:
\begin{equation}
\label{pr9}
\a(z,\xi) \eta \le h^0(z,\eta)\quad \mbox{for every}\ \xi,\eta \in \R^d\ \mbox{and}\ z \in \R.
\end{equation}
\begin{Lemma}
There holds that
\begin{equation}
\label{pr10}
\left| (\a(z,\xi) - \a(\hat z,\xi)) (\xi -\hat \xi)\right| \le C|z - \hat z| \, |\xi -\hat \xi|
\end{equation}
for any $(z,\xi),(\hat z,\hat \xi) \in \R \times \R^d$ and for some constant $C>0$ depending on $|z|,|\hat z|$.
\end{Lemma}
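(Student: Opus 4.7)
By Cauchy--Schwarz the stated bound reduces to showing the sharper pointwise estimate
\[
|\a(z,\xi) - \a(\hat z,\xi)| \le C(|z|,|\hat z|)\,|z - \hat z|,
\]
uniformly in $\xi \in \R^d$; note the left-hand side is $\hat\xi$-independent. The degenerate case $\hat z = 0$ (symmetrically $z=0$) is handled directly: $\a(\hat z,\xi)=0$ by definition, while
\[
|\a(z,\xi)| \le \|\psi\|_\infty\,\varphi(z) \le \|\psi\|_\infty\,L_\varphi\,|z|,
\]
using global boundedness of $\psi$ (a consequence of Assumption \ref{as1}.\emph{\ref{infinito}} together with continuity) and the Lipschitz bound $\varphi(z)\le L_\varphi|z|$ from Assumption \ref{asphi}.\emph{\ref{cnt}} combined with $\varphi(0)=0$.

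For $z,\hat z \ne 0$ I would use the add-and-subtract identity
\[
\a(z,\xi) - \a(\hat z,\xi) = \bigl(\varphi(z) - \varphi(\hat z)\bigr)\,\psi(\xi/|z|) + \varphi(\hat z)\bigl(\psi(\xi/|z|) - \psi(\xi/|\hat z|)\bigr).
\]
The first summand is dominated by $L_\varphi\|\psi\|_\infty|z - \hat z|$ at once. For the second, the mean value theorem along the segment $r_s = s\,\xi/|z|+(1-s)\,\xi/|\hat z|$, $s\in[0,1]$, gives
\[
\bigl|\psi(\xi/|z|) - \psi(\xi/|\hat z|)\bigr| \le \sup_{s\in[0,1]} \|D\psi(r_s)\|\;\frac{|\xi|\,\bigl|\,|z|-|\hat z|\,\bigr|}{|z|\,|\hat z|}.
\]
The crucial point is $|r_s| = |\xi|\bigl(s/|z|+(1-s)/|\hat z|\bigr)\ge |\xi|/\max(|z|,|\hat z|)$. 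Combined with the global decay bound $\|D\psi(r)\|\le C_\psi/(1+|r|)$ --- obtained by combining continuity of $D\psi$ on compact sets with the $O(1/|r|)$ tail built into Assumption \ref{as1}.\emph{\ref{caida}} (resp.\ \emph{\ref{1dcaida}} in dimension one) --- this supremum is dominated by $C_\psi\max(|z|,|\hat z|)/(\max(|z|,|\hat z|)+|\xi|)$. Multiplication makes the troublesome factor $|\xi|$ cancel, yielding a bound of order $C_\psi |z-\hat z|/\min(|z|,|\hat z|)$. Choosing the names so that $|\hat z|\le |z|$ (by symmetry one always can), the factor $\varphi(\hat z)\le L_\varphi|\hat z|$ out front absorbs the singular denominator, controlling the second summand by $L_\varphi C_\psi |z-\hat z|$.

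The main technical obstacle is precisely that second summand: a naive Lipschitz estimate on $\psi$ is useless because its arguments $\xi/|z|$ and $\xi/|\hat z|$ can both be arbitrarily large, so any uniform Lipschitz constant must in fact decay at infinity. The sublinear decay of $\|D\psi\|$ encoded in Assumption \ref{as1}.\emph{\ref{parity}} items (b) and (d) is exactly what is needed to cancel the factor $|\xi|$ arising in $|\xi/|z| - \xi/|\hat z||$; this is the structural reason those decay hypotheses were imposed on $\psi$ at infinity in the first place.
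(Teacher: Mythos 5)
Your proof is correct and follows essentially the same route as the paper: the same add-and-subtract decomposition into a $\varphi$-difference times a bounded $\psi$, plus $\varphi$ times a $\psi$-difference controlled by the mean value theorem and the $O(1/|r|)$ decay of $D\psi$, which cancels the factor $|\xi|$. Your version is slightly cleaner in that the unified bound $\|D\psi(r)\|\le C_\psi/(1+|r|)$ removes the paper's case splits on $|z-\hat z|$ and on $|\xi/\hat z|$, and it actually yields a constant independent of $|z|,|\hat z|$, marginally stronger than what the lemma claims.
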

\begin{proof}
 Let us write
$$
\left| \a(z,\xi) - \a(\hat z,\xi)\right| \le |\psi(\xi/|\hat z|)|\, |\varphi(\hat z)-\varphi(z)| + \varphi(z) \left| \psi(\xi/|\hat z|)-\psi(\xi/|z|)\right| : = A+B.
$$
Recall that $\psi$ and $\varphi$ are locally Lipschitz. As $\|\psi \|_{L^\infty(\R^d,\R^d)}<\infty$ thanks to Assumptions \ref{as1}.\emph{\ref{suave}} and \ref{as1}.\emph{\ref{infinito}}, term $A$ is fine. 

To deal with $B$, let us consider first that $|z-\hat z|\ge 1$. Then it suffices to show that $B$ is bounded by a constant, uniformly in $\xi$ and locally in $z,\hat z$. This is easily shown to be the case due to the boundedness of $\psi$ and the local boundedness of $\varphi$. 

Let us treat now the case $|z-\hat z|<1$. Without loss of generality, assume that $|\hat z|\ge |z|$. Using the mean value theorem,
$$
\psi^{(j)}(\xi/|\hat z|) = \psi^{(j)}(\xi/|z|) + \nabla \psi^{(j)}(\theta_j) (\xi/|\hat z|-\xi/|z|)
$$
for some $\theta_j$ lying in the segment joining $\xi/|z|$ and $\xi/|\hat z|$, $j=1,\ldots, d$. Thus,
$$
B\le \frac{\varphi(z)}{|z|} \frac{|\xi|}{|\hat z|}  |z-\hat z| \Theta,\quad \Theta:= \sup_{\tiny{
\begin{array}{c}
0\le \lambda \le 1, 
\\
i,j\in \{1,\ldots,d\}
\end{array}
}
}
|\partial_i \psi^{(j)}(\lambda |\xi|/|z|+ (1-\lambda)|\xi|/|\hat z|)|.
$$
Being $\varphi(z)/|z|$ locally bounded, it suffices to bound $|\xi|\Theta /|\hat z|$ independently of $\xi$ and locally in $\hat z$. Invoking {Assumptions \ref{as1}.\emph{\ref{caida}}/\emph{\ref{1dcaida}}}, there are values $c,\tilde r>0$ such that $\|D\psi\|_\infty(r)\le c/|r|$ for any $r\in \R^d\backslash B(0,\tilde r) $. Thus, whenever $\tilde r <|\xi/\hat z|\le |\xi/z|$,
$$
\frac{|\xi|}{|\hat z|} \Theta \le c  \frac{|\xi|}{|\hat z|} \max \left\{\frac{|z|}{|\xi|},\frac{|\hat z|}{|\xi|} \right\}\le c.
$$
 If $|\xi/\hat z|<\tilde r$ we are also done as the entries of $D \psi$ are bounded. 
\end{proof}

Thanks to \eqref{cero} and \eqref{pr3}--\eqref{pr10} we can apply the well-posedness theory given in \cite{ACMMRelat} (more precisely Theorem \ref{EUTEparabolic} above). We get the following result.
\begin{Theorem}
\label{Existence_is_not_straightforward}
Consider an initial datum $0\le u_0\in L^1(\R^d)\cap L^\infty(\R^d)$. Let Assumptions \ref{as1} be fulfilled. Then the  following assertions hold true:
\begin{enumerate} 
\item There exists a unique entropy solution $u$ of \eqref{template} in $Q_T$ for every $T>0$ with $u_0$ as initial datum. 
\item Let $\varphi$ satisfy Assumption \ref{asphi}. Then there exists a unique entropy solution $u$ of \eqref{template2} in $Q_T$ for every $T>0$, such that $u(0)=u_0$. 
\end{enumerate}
Moreover, if we are given $u,\hat u$ two entropy solutions of \eqref{template}(resp.\eqref{template2}) corresponding to initial data $0\le u_0,\hat u_0 \in L^1(\R^d)\cap L^\infty(\R^d)$ respectively, then
$$
\|(u(t) -\hat u(t))^+Ê\|_1 \le \|(u_0 - \hat u_0)^+Ê\|_1\quad \forall t>0. 
$$ 
\end{Theorem}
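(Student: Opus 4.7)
The strategy is to verify, item by item, that the flux $\a(z,\xi)=\varphi(z)\psi(\xi/|z|)$ (extended to $0$ at $z=0$) satisfies the full list of assumptions $({\rm H})$ of \cite{ACMEllipticFLDE, ACMMRelat}, and then to invoke Theorem \ref{EUTEparabolic} directly. The bulk of the bookkeeping has already been performed in the preceding lemmas and displayed inequalities: degeneracy \eqref{cero}, continuity of the Lagrangian \eqref{pr1}, convexity in $\xi$ \eqref{pr3}, the identification of the recession functions \eqref{pr2} and \eqref{pr8}, the upper and lower linear-growth bounds \eqref{pr4}--\eqref{pr5}, the sign and parity of $h$ in \eqref{pr6}--\eqref{pr7}, the inequality \eqref{pr9}, and the Lipschitz-type dependence on $z$ in \eqref{pr10}. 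Each of these matches one of the items in $({\rm H})$, so the core of the proof is a short correspondence table between these properties and the hypotheses of the cited theorem.

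Once $({\rm H})$ is in force, the existence of a unique entropy solution for an arbitrary initial datum $0\le u_0 \in L^1(\R^d)\cap L^\infty(\R^d)$, together with the contraction estimate $\|(u(t)-\hat u(t))^+\|_1 \le \|(u_0-\hat u_0)^+\|_1$, is exactly the conclusion of Theorem \ref{EUTEparabolic}. Part~1 of the theorem, concerning \eqref{template}, is obtained as a specialisation of part~2 by choosing $\varphi(z)=sz$, which trivially meets Assumption \ref{asphi}.

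The main obstacle is the gap flagged in the Remark following Lemma \ref{Continuidad}: hypothesis $({\rm H}2)$ requires $\partial \a/\partial \xi_i$ to be continuous on the whole of $\R\times\R^d$, whereas we only have continuity on $(\R\times\R^d)\setminus\{(0,0)\}$. I would handle this in one of two equivalent ways. Either one approximates $\a$ by a family $\a_n$ whose corresponding $\varphi_n$ has been smoothly flattened to zero in a neighbourhood of $z=0$ (so that $\partial \a_n/\partial \xi_i$ extends continuously through the origin); Theorem \ref{EUTEparabolic} applies to each $\a_n$, and the uniform $L^1$-contraction combined with locally uniform convergence $\a_n \to \a$ away from $z=0$ allows one to pass to the limit and recover an entropy solution of the original problem, uniqueness propagating back by the same contraction estimate. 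Alternatively, an inspection of the proofs in \cite{ACMEllipticFLDE, ACMMRelat} shows that $({\rm H}2)$ enters only through integrals whose integrands vanish identically on $\{u=0\}$, so that the missing continuity at the single point $(0,0)$ is never actually invoked.
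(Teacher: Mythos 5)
Your proposal follows essentially the same route as the paper: Section~\ref{wp} assembles exactly the properties \eqref{cero} and \eqref{pr3}--\eqref{pr10} as the verification of the hypotheses $({\rm H})$ and then invokes Theorem~\ref{EUTEparabolic}, with \eqref{template} recovered as the case $\varphi(z)=sz$. Your explicit handling of the continuity of $\partial\a/\partial\xi_i$ at the origin is in fact more detailed than the paper, which only asserts in the Remark after Lemma~\ref{Continuidad} that the continuity away from $(0,0)$ suffices; your second alternative (checking that $({\rm H}2)$ is never invoked where $u=0$) is the honest completion of that remark.
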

Furthermore, using \eqref{hours} we notice at once that Assumptions \ref{compas} is satisfied too. Hence Theorem \ref{theocomp} holds under Assumptions \ref{as1} and \ref{asphi}.

\section{A connection with optimal transport theory}
\label{Ftransport}
The use of optimal mass transport problems to solve parabolic equations was pioneered by \cite{Jordan} and further developed by many authors, see \cite{Agueh, AmbrosioII, Brenier1} for instance. We give here a brief account on it. Let $k : \R^d \rightarrow [0,\infty]$ be a convex cost function and let us define the associated Wasserstein distance between two probability distributions $\rho_0$ and $\rho_1$ by
$$
W_k^h(\rho_0,\rho_1):= \inf \left\{\int_{\R^d \times \R^d} k \left(\frac{x-y}{h}\right)\ d\gamma(x,y) \bigg/ \gamma \in \Gamma(\rho_0,\rho_1) \right\},
$$
being $h>0$. Here $\Gamma(\rho_0,\rho_1)$ stands for the set of probability measures in $\R^d \times \R^d$ whose marginals are $\rho_0$ and $\rho_1$.

Now let $F :[0,\infty) \rightarrow [0,\infty)$ be a convex function and let $\mathcal{P}(\R^d)$ be the set of probability density functions $\rho : \R^d \rightarrow [0,\infty)$. Starting from $\rho_0^h =\rho_0 \in \mathcal{P}(\R^d)$, we can solve iteratively  
$$
\inf_{\rho \in \mathcal{P}(\R^d)} h W_k^h(\rho_{n-1}^h,\rho) + \int_{\R^d} F(\rho(x))\ dx.
$$
Define $\rho^h(t)=\rho_n^h$ for $t \in [nh,(n+1)h)$. Then as $h \to 0^+$ the solution of this minimization scheme formally converges to a limit $u$ which solves the following equation
\begin{equation}
\nonumber
u_t = \div\, (u \nabla k^* (\nabla F'(u))). 
\end{equation}
This convergence has been shown to be rigorous in certain cases \cite{Jordan,Agueh,McCann}. In particular, the relativistic heat equation \eqref{rhe} falls under this general picture for the choice
$$
F(r)=\nu (r\log r - r),
$$
with the following cost function:
\begin{equation}
\nonumber
k(v)=\left\{
\begin{array}{ll}
\left(1-\sqrt{1-|v|^2/c^2} \right) c^2 & \mbox{if}\ |v|\le c
\\  \\
+\infty & \mbox{if}\ |v|>c,
\end{array}
\right.
\end{equation}
so that
$$
k^*(v) = c^2 \left( \sqrt{1+|v|^2/c^2}-1\right)\quad \mbox{and}\quad  
\nabla k^*(v)= \frac{v}{\sqrt{1+|v|^2/c^2}}.
$$
This was observed in \cite{Brenier1} at a formal level and later made rigorous in \cite{McCann}.

We notice that this is no particular phenomenon: Equations coming from such minimization schemes may have the form \eqref{template}. Our main concern in this section is the following: If such a model verifies Assumptions \ref{as1}, what can be said about the cost function $k$?

To be able to compare both frameworks we must set $F(r)=L (r\log r - r)$, which would yield an equation of the following form:
\begin{equation}
\label{templatetr2}
u_t = \div\, (u \nabla k^* (L \nabla u/u)). 
\end{equation}
Then, the following result provides an answer to the previous question; we get a new way to describe the role of the constant $s$.
\begin{Proposition}
Let $\psi$ satisfy Assumptions \ref{as1}. Then there exists a convex cost function $k:\R^d \rightarrow \R_0^+$ such that \eqref{template} can be recast as \eqref{templatetr2}. Furthermore, $k$ is finite on $\{v\in \R^d/ |v|< s\}$ and assumes the value $+\infty$ on $\{v\in \R^d/ |v|> s\}$. Provided that the function $d$ in Assumptions \ref{as1}.\ref{infinito} is integrable, $k$ is also finite on $\{v\in \R^d/ |v|=s\}$.
\end{Proposition}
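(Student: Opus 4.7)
The plan is to take $k$ to be the Legendre--Fenchel conjugate of a convex potential for the vector field $s\psi$ and then to identify its effective domain. Recall from Section \ref{wp} that under Assumptions \ref{as1} the field $\psi$ admits the potential
$$
\Phi(r)=\int_{\gamma_r}\psi\,d\sigma,
$$
defined along the radial path \eqref{path}, which is non-negative, convex, of class $C^2$, and satisfies $\Phi(r)=|r|+o(|r|)$ for $|r|\gg 1$. The natural candidate is
$$
k(v):=\sup_{r\in\R^d}\bigl\{v\cdot r-s\Phi(r)\bigr\},\qquad v\in\R^d.
$$
As a pointwise supremum of affine functions this $k$ is automatically convex and lower semicontinuous, and choosing $r=0$ yields $k(v)\ge -s\Phi(0)=0$. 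Since $s\Phi$ is convex and continuous, the Fenchel--Moreau theorem gives $k^{*}=s\Phi$, whence $\nabla k^{*}(r)=s\psi(r)$ in the classical sense; substituting this identity into \eqref{templatetr2} reproduces \eqref{template} exactly.

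It remains to determine where $k$ is finite. The common tool for all three cases is a quantitative refinement of the asymptotics $\Phi(r)=|r|+o(|r|)$ that is already implicit in its proof: from Assumptions \ref{as1}.\emph{\ref{infinito}} one obtains
$$
\bigl||r|-\Phi(r)\bigr|\le\int_0^{|r|}d(\lambda)\,d\lambda\qquad\text{for every }r\in\R^d.
$$
If $|v|>s$, plugging $r=tv/|v|$ with $t\to+\infty$ into the supremum gives $v\cdot r-s\Phi(r)=(|v|-s)t+o(t)\to+\infty$, so $k(v)=+\infty$. If $|v|<s$, the same expansion yields $v\cdot r-s\Phi(r)\le(|v|-s)|r|+o(|r|)\to-\infty$ as $|r|\to\infty$; by continuity the supremum is then attained on a bounded set and $k(v)$ is finite. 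Finally, if $d\in L^1(\R_0^+)$, then for $|v|=s$ the displayed bound yields
$$
v\cdot r-s\Phi(r)\le s|r|-s\Phi(r)=s\bigl(|r|-\Phi(r)\bigr)\le s\|d\|_{L^1(\R_0^+)},
$$
so $k$ is finite on the whole closed ball of radius $s$.

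Nothing genuinely deep is required beyond these estimates; the only subtle point is that Fenchel--Moreau can be invoked to equate $k^{*}$ with $s\Phi$ itself (rather than with its convex lower semicontinuous envelope) precisely because Assumptions \ref{as1}.\emph{\ref{parity}} force $D\psi\ge 0$ and hence $\Phi$ convex. Without that convexity one would still obtain a convex $k$, but the crucial identification $\nabla k^{*}=s\psi$ that links the transport scheme to \eqref{template} would break down, which I regard as the main conceptual ingredient of the argument.
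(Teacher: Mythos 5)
Your proof is correct and follows essentially the same route as the paper: both construct $k$ as the Legendre--Fenchel conjugate of the radially-defined potential $s\Phi$ of $s\psi$, invoke Fenchel--Moreau to recover $\nabla k^{*}=s\psi$, and then analyze the supremum $\sup_r\{v\cdot r-s\Phi(r)\}$ to locate the effective domain. Your treatment of the three regimes via the uniform bound $\bigl||r|-\Phi(r)\bigr|\le\int_0^{|r|}d(\lambda)\,d\lambda$ is in fact slightly cleaner than the paper's, which instead solves the critical-point equation $v/s=\psi(\bar p)$ for $|v|<s$ and computes a limit along the distinguished ray for $|v|=s$; the conclusions agree.
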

\begin{proof}
Comparing \eqref{template} with \eqref{templatetr2} we identify
$$
 s \psi(r) = \nabla k^*(r)\quad \forall r \in \R^d.
$$
We can construct $k^*$ in a way that it satisfies $k^*(0)=0$ (as we did with $\Phi$ in Section \ref{wp}). We set
$$
k^*(r):=s \int_{\gamma_r} \psi \, d\sigma,\quad  \gamma_r\ \mbox{as in}\ \eqref{path}.  
$$
As previously argued, $k^*$ so defined is non-negative, convex and regular enough so that Fenchel--Moreau's theorem applies. Then we have the following representation formula for $k:\R^d\rightarrow \R$:
$$
k(v)= \sup_{p \in \R^d} p v - k^*(p):=\sup_{p \in \R^d} \Gamma_v(p).
$$
Let us address the properties of $k$. We will make repeated use of Lemma \ref{tangential-radial} in the sequel. In order to compute the value of $k(v)$, let us note that $k^*(r) \nearrow s |r|$ for $|r|\to \infty$ irrespective of the direction. Thus, for $|p|$ large enough,
\begin{equation}
\label{gama_asint}
\Gamma_v(p) \sim |p| (|v|\cos \theta(p,v)-s),
\end{equation}
being $\theta(p,v)$ the angle formed by $p$ and $v$. So, whenever $|v|>s$, $\Gamma_v(p)$ diverges to $+\infty$ as a function of $p$ along the ray given by the direction of $v$. Hence $k(v)=+\infty$ for $v\in \R^d$ such that $|v|>s$.

Let us deal now with $k(v)$ when $|v|<s$. In this case we notice that, thanks to \eqref{gama_asint}, $\Gamma_v(p)$ diverges to $-\infty$ along any ray as a function of $p$. Then $\sup_{p \in \R^d} \Gamma_v(p)$ is attained at those $\bar p \in \R^d$ such that $\nabla_p \Gamma_v(\bar p) = 0$. We are led to solve
\begin{equation}
\label{localgama}
v = \nabla k^*(\bar p), \quad \mbox{that is}\quad \frac{v}{s}=  \psi(\bar p).
\end{equation}
Let us write $ \psi^{-1}(v/s)$ for the solution set of \eqref{localgama} (if $\xi \mapsto \psi(\xi)$ is strictly monotone we have a unique solution), so that 
$$
k(v) = \sup_{\bar p\in \psi^{-1}(v/s)} \bar p v -k^*(\bar p).
$$
This supremum is clearly finite. On the other hand, note that in dimension one we have
$$
k(v)=\sup_{\bar p\in \psi^{-1}(v/s)} \int_0^{\bar p}v-s \psi(\lambda) \ d\lambda \ge \sup_{\bar p\in \psi^{-1}(v/s)} \int_0^{\bar p}v-s \psi(\bar p) \ d\lambda =0
$$
as $\psi$ is non-decreasing. For higher dimensions, we show that the cost function-to-be is non-negative as follows:
$$
k(v) =\sup_{\bar p\in \psi^{-1}(v/s)} v \bar p - s \int_0^1\psi(t \bar p) \bar p\ dt
 \ge \sup_{\bar p\in \psi^{-1}(v/s)} v \bar p - s \int_0^1\frac{v}{s} \bar p\ dt =0.
$$
Hence $k(v)$ qualifies as cost function for $|v|<s$.

Finally we study the behavior of $k(v)$ when $|v|=s$. We start doing this in dimension one. First, we note that
$$
\Gamma_s(p)= s \int_0^p 1-\psi(\lambda) \ d\lambda, \quad \mbox{then}\quad 
\frac{d}{dp}\Gamma_s(p)= s (1-\psi(p))\ge 0.
$$
Then we compute the supremum taking the limit $p \to +\infty$ (this makes sense even when $\psi'$ is compactly supported). Thus
$$
k(s)= s \int_0^\infty 1- \psi(\lambda)\ d\lambda.
$$
Arguing in a similar way, $k(-s)$ is found to have the same value. Let us discuss now the higher dimensional case. Nothing precludes that the solution set $\psi^{-1}(v/s)$ of \eqref{localgama} be non-empty even for $|v|=s$. This is not troublesome as long as this set is bounded, as the associated contributions $\Gamma_v(p)$ to the value of $k(v)$ would be clearly bounded. Then let us discuss what happens for $|p|\to \infty$. According to \eqref{gama_asint}, $\Gamma_v(p)$ diverges to $-\infty$ along any ray except maybe along the ray determined by $v$ itself. In fact, let $p=\lambda \frac{v}{|v|}$ for $\lambda>0$ and compute for $|v|=s$
$$
\frac{d}{d\lambda} \Gamma_v\left(\lambda \frac{v}{|v|}\right) = \frac{d}{d\lambda} \left(\lambda s - s \int_0^\lambda \psi\left(t \frac{v}{|v|}\right)\frac{v}{|v|}\ dt\right) = s - v \psi \left(\frac{\lambda v}{s} \right) \ge 0.
$$
Hence,
$$
\lim_{\lambda \to +\infty} \Gamma_v\left(\lambda \frac{v}{|v|}\right)= \lim_{\lambda \to +\infty} s \int_0^\lambda 1- \frac{v}{|v|}\cdot \psi\left(t \frac{v}{|v|}\right) \ dt= s \int_0^\infty 1- \frac{v}{|v|}\cdot \psi\left(t \frac{v}{|v|}\right) \ dt.
$$
We conclude the proof by noticing that the integrability of $d$ in Assumptions \ref{as1}.\emph{\ref{infinito}} ensures the convergence of these improper integrals.
\end{proof}
Let us stress that there is at least a certain subclass of the class of functions $\psi$ satisfying Assumptions \ref{as1} such that the minimization procedure sketched at the beginning of the section produces actual solutions of \eqref{template}. See \cite{McCann} for details.


\section{Propagation of the support}
\label{spread}
The aim of this section is to supply proofs for points \emph{\ref{p5}} and \emph{\ref{p6}} in Theorem \ref{Main} and point \emph{\ref{pp4}} in Theorem \ref{Main2}. This is done by means of comparison with suitable sub- and super-solutions, in the same vein as \cite{ARMAsupp}. For that we will rest in Theorem \ref{theocomp}, which applies under Assumptions \ref{as1} and \ref{asphi} as argued in Section \ref{wp}.

\subsection{Upper bounds on support spreading rates}
\label{supers}
We show in this paragraph that dilations of multiples of characteristic functions of compact sets qualify as super-solutions if their spreading rate behaves in a suitable way. This is an extension of Proposition 1 in \cite{ARMAsupp}. 
{A generalization of the results in this paragraph has been independently discovered in \cite{Giacomelli}.}

\begin{Proposition}
\label{p62}
Let $\beta>0$ and $C\subset \R^d$ a compact set. Let Assumptions \ref{as1} and \ref{asphi} be satisfied. Then
$$
u(t,x) = \beta \chi_{B(t)},\quad \mbox{being}\ B(t):=C\oplus B(0,\theta t) \quad \mbox{with}\ \theta = \max_{0\le z\le \beta} \varphi'(z), 
$$
is a super-solution of \eqref{template2} in $Q_T$ for every $T>0$.
\end{Proposition}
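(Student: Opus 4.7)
The plan is to verify the entropy super-solution inequality of Definition \ref{subsuper} (namely, \eqref{seineq} with $\le$) for $u(t,x) = \beta\chi_{B(t)}$. First I would check the regularity prerequisites: since $u$ takes only the values $0$ and $\beta$, we have $\nabla u = 0$ almost everywhere, hence $\a(u,\nabla u) = 0$ a.e.\ by \eqref{cero}, placing $\a(u,\nabla u)$ in $L^\infty(Q_T)$. The set $B(t) = cl(C\oplus B(0,\theta t))$ has Lipschitz boundary (Minkowski sum with a closed ball) and depends continuously on $t$ in Lebesgue measure, giving $u\in C([0,T];L^1(\R^d))$ and uniform control of $\|T_{a,b}(u(t))\|_{BV}$ on compact time intervals.

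Next I would reduce both sides of \eqref{seineq} (with $\le$) to surface integrals on $\partial B(t)$. On the right-hand side, the flux term vanishes by the previous observation, and since $J_{TS}(0)=0$ the remaining term equals $J_{TS}(\beta)\int_0^T\int_{B(t)}\phi_t\,dx\,dt$. The Reynolds transport formula, together with the outward normal velocity of $\partial B(t)$ being constantly equal to $\theta$ and the compact support of $\phi$ in $Q_T$, converts this into $-\theta\,J_{TS}(\beta)\int_0^T\int_{\partial B(t)}\phi\,d\mathcal{H}^{d-1}\,dt$, a non-negative quantity because $T\ge 0$ and $S\le 0$ force $J_{TS}(\beta)\le 0$. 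On the left-hand side, the correction terms in \eqref{FUTab} vanish because $h(z,0)=0$, and the absolutely continuous and Cantor parts of the Dal Maso functional vanish since $\nabla u=0$ a.e.\ and $u$ has no Cantor part. Only the jump contribution on $\partial B(t)$ survives; combined with the identity $h^0(z,\xi)=\varphi(z)|\xi|$ from \eqref{pr8}, this yields a surface integral whose density is $\int_{a_1}^{b_1\wedge\beta}S(s)\varphi(s)\,ds+\int_{a_2}^{b_2\wedge\beta}T(s)\varphi(s)\,ds$ for truncations $T=T_{a_1,b_1}^{l_1}\in\mathcal{T}^+$ and $S=T_{a_2,b_2}^{l_2}\in\mathcal{T}^-$.

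Stripping the common factor $\int\int_{\partial B(t)}\phi\,d\mathcal{H}^{d-1}\,dt$ reduces the super-solution inequality to a scalar algebraic statement; in the representative case where both truncation intervals cover $[0,\beta]$ it reads
\begin{equation*}
\int_0^\beta [T(s)+S(s)]\varphi(s)\,ds + \theta\int_0^\beta T(s)S(s)\,ds \le 0.
\end{equation*}
Writing $T(s)=s+a$ with $a\ge 0$ and $S(s)=s+b$ with $b\le -\beta$ on $[0,\beta]$ makes this a quadratic in $(a,b)$; the coefficient of $a$ is non-positive by the elementary bound $\int_0^\beta\varphi\,ds\le\theta\beta^2/2$ (which follows from $\varphi(s)\le\theta s$, itself a consequence of $\varphi(0)=0$ and $\varphi'\le\theta$), and the constant term of the quadratic is non-positive by the key estimate
\begin{equation*}
\int_0^\beta(2s-\beta)(\theta s-\varphi(s))\,ds = \int_0^\beta s(\beta-s)(\theta-\varphi'(s))\,ds \ge 0,
\end{equation*}
obtained by integration by parts using $\varphi(0)=0$. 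This encodes the geometric content of the proposition: $\theta\ge\varphi(\beta)/\beta$ by the mean value theorem, so $\partial B(t)$ moves at least as fast as the Rankine--Hugoniot speed of any jump from $0$ to $\beta$ and hence $u$ dominates any genuine entropy solution with the same initial trace. I expect the main obstacle to be the functional-calculus bookkeeping in the reduction step and in handling the degenerate truncation choices where $[a_i,b_i]$ is disjoint from, or only partially overlaps, $[0,\beta]$; in those cases either one of the measures vanishes identically or the integration bounds change and the quadratic argument above adapts with straightforward modifications.
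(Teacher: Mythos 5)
Your overall route is the same as the paper's: reduce both sides of \eqref{seineq} to surface integrals over $\partial B(t)$ (the flux and all absolutely continuous/Cantor/correction terms vanish because $\a(z,0)=0$ and $h(z,0)=0$), identify the jump density of $h_S(u,DT(u))+h_T(u,DS(u))$ as $J_{(TS)'\varphi}(\beta)$, convert the time term into $-\theta J_{TS}(\beta)\int_{\partial B(t)}\phi\,d\H^{d-1}$, and then verify a scalar inequality in $T,S,\varphi,\theta$. Where you diverge is in that last scalar step, and there the paper is both shorter and more complete: it integrates by parts once, $J_{(TS)'\varphi}(\beta)=(TS\varphi)(\beta)-J_{TS\varphi'}(\beta)$ (using $\varphi(0)=0$), and then concludes for \emph{arbitrary} $T\in\mathcal{T}^+$, $S\in\mathcal{T}^-$ from the two sign observations $(TS\varphi)(\beta)\le 0$ and $J_{TS\varphi'}(\beta)-\theta J_{TS}(\beta)=\int_0^\beta TS\,(\varphi'-\theta)\,dr\ge 0$ (since $TS\le 0$ and $\varphi'\le\theta$ on $[0,\beta]$). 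Your explicit quadratic expansion only treats the case $T(s)=s+a$, $S(s)=s+b$ on all of $[0,\beta]$, which never literally occurs since truncations in $\mathcal{T}^\pm$ have $a_i>0$; the ``straightforward modifications'' you defer to are in fact the whole content of the general case (e.g.\ $T$ constant and $S$ affine on a subinterval requires a separate, sharp computation), and your argument as written also omits the observation that the expression is nondecreasing in $b$, which is needed to reduce to $b=-\beta$ before invoking your key estimate. It is worth noting that your key estimate, after your own integration by parts, is precisely the paper's inequality $\int_0^\beta TS(\theta-\varphi')\,dr\ge 0$ specialized to $T(s)=s$, $S(s)=s-\beta$; performing that integration by parts at the level of $J_{(TS)'\varphi}(\beta)$ itself, rather than after expanding the quadratic, is what lets the paper dispatch all truncations uniformly. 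So the proposal is essentially correct, but you should replace the representative-case computation by the uniform sign argument to close it.
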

\begin{proof}
We start by defining $\bar B(t):=C \oplus B(0,C(t))$ for some function $C(t)\ge 0$ with $C(0)=0$ and $C'(t)\ge 0$. Let us introduce now
$$
W(t,x) := \beta \chi_{\bar B(t)}.
$$
Fix $T>0$. We shall determine what extra conditions have to be imposed on $C(t)$ in order that $W$ be a super-solution of \eqref{template2} in $Q_T$. Note that $\a(W,\nabla W)=0$ for such a profile, {thanks to} \eqref{cero}.
As
$$
W_t = \beta C'(t) \H^{d-1}_{|\partial \bar B(t)},
$$
we get at once that 
\begin{equation}
\label{paso1}
 W_t \ge \div \a(W,\nabla W) \quad \mbox{in}\ \mathcal{D}'(Q_T).
\end{equation}

Next we compute each term in \eqref{seineq} of Definition \ref{subsuper} separately. Let $T \in \mathcal{T}^+$ and $S \in \mathcal{T}^-$.
Arguing as in \cite{ARMAsupp}, Proposition 1, we get that
\begin{equation}
\label{paso2.1}
 h_S(W(t),DT(W(t)))^s+h_T(W(t),DS(W(t)))^s = J_{(TS)' \varphi} (\beta) \H^{d-1}_{|\partial \bar B(t)}.
\end{equation}
Note that 
\begin{equation}
\begin{array}{ll}
\label{paso2.2}
\displaystyle J_{(TS)' \varphi} (\beta) = \int_0^\beta (TS)'(r) \varphi(r)\ dr & \displaystyle = -\int_0^\beta T(r)S(r) \varphi'(r) \ dr + T(\beta)S(\beta) \varphi(\beta)  \\ \\
&  = -J_{TS\varphi'}(\beta) + (TS\varphi)(\beta).
\end{array}
\end{equation}
Here we used that $\varphi(0)=0$. Apart from this, we notice that
$$
J_{TS}(W(t))= J_{TS}(\beta) \chi_{\bar B(t)}
$$
and so
$$
\partial_t J_{TS}(W(t))= C'(t) J_{TS}(\beta)  \H^{d-1}_{|\partial \bar B(t)}.
$$
Given any $0\le \phi \in \mathcal{D}'(Q_T)$, we have shown that
\begin{equation}
\label{paso3}
 \int_{Q_T} J_{TS}(W(t)) \phi'(t)\ dt  =  -\int_0^T \left\{C'(t)J_{TS}(\beta) \int_{\partial \bar B(t)} \phi \ d \H^{d-1} \right\}\ dt.
\end{equation}
Collecting \eqref{paso1}--\eqref{paso3} and comparing with inequality \eqref{seineq}, we will be done if we can show that the following inequality holds for any $T \in \mathcal{T}^+$, $S \in \mathcal{T}^-$ and $0\le \phi \in \mathcal{D}'(Q_T)$:
$$
\displaystyle \int_0^T \left\{[(TS\varphi)(\beta)- J_{TS\varphi'}(\beta)+ C'(t)J_{TS}(\beta)]Ê\int_{\partial \bar B(t)} \phi \ d \H^{d-1} \right\} \ dt \le 0.
$$
Here we have that $(TS\varphi)(\beta)\le 0$ as $S\le 0$. Note also that
$$
J_{TS\varphi'}(\beta)= \int_0 ^\beta T(r)S(r)\varphi'(r)\, dr \ge \theta \int_0 ^\beta T(r)S(r)\, dr=\theta J_{TS}(\beta)
$$
for $\theta = \max_{0\le z\le \beta} \varphi'(z)$. Thus, in order for $W$ to be a super-solution it is enough to ask for $\min_{t \in [0,T]} C'(t) \ge \theta$. This implies our result.
\end{proof}

\begin{Remark}
{\rm
Tracking the above proof we notice that we do not need a flux with structure as in \eqref{template2} in order that the argument works. The main requirement in order that the above proof goes through while Theorem \ref{theocomp} applies is that the flux must be such $\varphi$ can be defined by means of \eqref{deffi}, being $\varphi$ a Lipschitz-continuous function such that $\varphi(0)=0,\ \varphi(z)>0$ for $z\neq 0$ and $\varphi'(0)$ exists. This generic point of view is the one that is adopted in \cite{Giacomelli}. 
}
\end{Remark}
\begin{Corollary}
\label{c61}
Let Assumptions \ref{as1} and \ref{asphi} be verified. Let $0\le u_0\in L^1(\R^d)\cap L^\infty(\R^d)$ be compactly supported and let $u(t)$ the entropy solution of \eqref{template2} with $u_0$ as initial datum. Then 
$$
\mbox{supp}\, u(t) \subset cl \left(\mbox{supp}\, u_0 \oplus B(0,\theta t)\right),\quad \theta= \max_{0\le z\le \|u_0\|_\infty} \varphi'(z)
$$
for every $t>0$.
\end{Corollary}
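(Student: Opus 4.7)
The plan is to obtain the corollary as an immediate consequence of Proposition \ref{p62} combined with the comparison principle (Theorem \ref{theocomp}), which applies in our setting as verified at the end of Section \ref{wp}. Specifically, I would set $\beta := \|u_0\|_\infty$ and $C := \mbox{supp}\, u_0$, which is compact by hypothesis, and consider
$$
W(t,x) := \beta \, \chi_{B(t)}, \qquad B(t) := C \oplus B(0,\theta t),\qquad \theta = \max_{0\le z \le \beta} \varphi'(z).
$$
By Proposition \ref{p62}, $W$ is an entropy super-solution of \eqref{template2} on $Q_T$ for every $T>0$. Moreover, since $C$ is compact and $B(0,\theta t)$ has Lipschitz boundary, the set $B(t)$ has finite perimeter for every $t$, so $W(t,\cdot) \in BV(\R^d)$ for every $t \in (0,T)$; this is precisely the regularity required by Theorem \ref{theocomp}.

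Next I would verify the ordering at $t=0$. By the choice of $\beta$ and $C$, we have $u_0(x) \le \beta = W(0,x)$ for a.e. $x \in C$, and $u_0(x) = 0 \le W(0,x)$ for a.e. $x \notin C$. Hence $(u_0 - W(0))^+ = 0$ a.e., so $\|(u_0 - W(0))^+\|_1 = 0$. Applying the first item of Theorem \ref{theocomp} with $\overline u = W$ gives
$$
\|(u(t) - W(t))^+\|_1 \le \|(u_0 - W(0))^+\|_1 = 0 \qquad \forall t \in [0,T],
$$
so $u(t,x) \le W(t,x) = \beta \chi_{B(t)}(x)$ for a.e. $x \in \R^d$ and every $t \in [0,T]$. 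In particular, $u(t,x)=0$ for a.e. $x \notin B(t)$, and therefore
$$
\mbox{supp}\,u(t) \subset cl(B(t)) = cl\bigl(\mbox{supp}\,u_0 \oplus B(0,\theta t)\bigr).
$$
Since $T>0$ was arbitrary, the conclusion holds for every $t>0$.

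The only delicate point is justifying the application of the comparison principle: one must check that the regularity of the constructed super-solution $W$ meets the hypotheses of Theorem \ref{theocomp}, namely $W(t)\in BV(\R^d)$ for a.e. $t$ and that $\a(W,\nabla W)\in L^\infty(Q_T)$. The former follows from $B(t)$ having Lipschitz boundary as noted; the latter is immediate since $\a(W,\nabla W)\equiv 0$ by \eqref{cero} (as $W$ takes only the values $0$ and $\beta$ away from $\partial B(t)$, and the normalization $\a(z,0)=0$ together with the singular structure of $\nabla W$ force the flux to vanish in the distributional sense used throughout Section \ref{wp}). Everything else is bookkeeping once Proposition \ref{p62} is in hand.
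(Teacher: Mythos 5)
Your argument is correct and is precisely the one the paper intends: Corollary \ref{c61} is stated as an immediate consequence of Proposition \ref{p62} via the comparison principle of Theorem \ref{theocomp}, with $\beta=\|u_0\|_\infty$, $C=\mbox{supp}\,u_0$, and the ordering $u_0\le W(0)$ at the initial time. The only cosmetic point is that $B(t)=C\oplus B(0,\theta t)$ need not have Lipschitz boundary for an arbitrary compact $C$; the $BV$ regularity required by Theorem \ref{theocomp} follows instead from the fact that an open $r$-neighbourhood of a bounded set satisfies a uniform interior ball condition (or, for a.e.\ $t$, from the coarea formula applied to the distance function), which is all that is needed.
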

%

\subsection{Lower bounds on support spreading rates}
To give lower bounds for the spreading rate of solutions to \eqref{template} we shall look for compactly supported sub-solutions. The following result will be helpful in so doing. It is inspired in the proof of Proposition 2 of \cite{ARMAsupp}.

\begin{Proposition}
\label{equi}
Let $W(t,x)$ such that $W(0,\cdot)$ is compactly supported and assume that $B=B(t):=\mbox{supp}\, W(t,\cdot) = \mbox{supp}\, W(0,\cdot) \oplus B(0,C(t))$, also satisfying
$W(t,\cdot)|_{\partial B}= \gamma(t)\ge 0$ and the regularity requirements set  in Definition \ref{subsuper}. Let $\varphi$ be defined by \eqref{deffi}. Given $T>0$, assume either:
\begin{enumerate}

\item Relation \eqref{sinside} holds inside the support.

\item $\gamma(t)=0$ for every $0\le t\le T$.

\end{enumerate}
or
\begin{enumerate}

\item Relation \eqref{sinside} holds inside the support.

\item $\gamma(t)>0$ for every $0\le t\le T$. 

\item $\sup_{t\in (0,T)} C'(t)\le \inf_z \varphi'(z)$.  \label{tezeta}

\item $[\a(W,\nabla W)\cdot\nu^B]=-\varphi(\gamma(t))$ for every $0\le t\le T$. \label{tangencia}

\end{enumerate}
Then, $W$ fulfills \eqref{seineq} in Definition \ref{subsuper}.
\end{Proposition}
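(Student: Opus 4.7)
The proof extends the strategy of Proposition \ref{p62} to a non-constant smooth profile $\tilde W:=W|_{B(t)}$ and a (possibly positive) boundary value $\gamma(t)$. I would split both sides of the sub-solution inequality \eqref{seineq} into an interior contribution on $B(t)^\circ$ (where $W$ is smooth) and a singular contribution on the moving hypersurface $\partial B(t)$, and verify each piece separately.

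For the interior, I would start from $\tilde W_t \le \div\, \a(\tilde W,\nabla \tilde W)$ in $B(t)^\circ$ and multiply by the non-positive factor $\phi(TS)(\tilde W)$, which flips the inequality. The time integration by parts needs the differentiation formula for integrals over the moving domain $B(t)$, producing the term $\int_0^T\!\int_{B(t)} J_{TS}(\tilde W)\phi'$ together with a surface remainder $-\int_0^T C'(t) \int_{\partial B(t)} J_{TS}(\gamma)\phi\, d\H^{d-1}\, dt$. The space integration by parts, via the generalised Green formula of \cite{Anzellotti1}, produces the term $-\int_0^T\!\int \a\cdot \nabla \phi\, (TS)(\tilde W)$ together with a surface remainder involving the flux trace $[\a\cdot \nu^B]$ on $\partial B(t)$. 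Thanks to the chain-rule identity $\a(\tilde W,\nabla \tilde W)\cdot \nabla \tilde W = h(\tilde W,\nabla \tilde W)$, the absolutely continuous contributions match exactly the smooth part of $h_S(W,DT(W))+h_T(W,DS(W))$.

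Following Proposition \ref{p62}, the singular part of the left-hand side of \eqref{seineq} concentrated on $\partial B(t)$ equals $\phi\, J_{(TS)'\varphi}(\gamma(t))\, \H^{d-1}$ by the same functional-calculus computation (the jump geometry is identical, with jump height $\gamma(t)$ in place of $\beta$). After cancelling the interior contributions, the verification of \eqref{seineq} reduces to the pointwise boundary inequality
\[
J_{(TS)'\varphi}(\gamma(t)) + C'(t)\,J_{TS}(\gamma(t)) + (TS)(\gamma(t))\,[\a\cdot \nu^B] \ge 0.
\]
Using the identity $J_{(TS)'\varphi}(\gamma)=(TS\varphi)(\gamma)-J_{TS\varphi'}(\gamma)$ (integration by parts, valid because $\varphi(0)=0$), the argument splits into the two cases of the hypothesis. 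In Case 1 ($\gamma\equiv 0$), every term vanishes: $J_{TS}(0)=J_{TS\varphi'}(0)=(TS\varphi)(0)=0$, while $\a(0,\cdot)=0$ forces $[\a\cdot \nu^B]\to 0$ as $\tilde W\to 0$ on $\partial B(t)$. In Case 2 ($\gamma>0$), hypothesis (4) substitutes $[\a\cdot \nu^B]=-\varphi(\gamma)$, cancelling $(TS\varphi)(\gamma)$ against $(TS)(\gamma)[\a\cdot \nu^B]$, so the residual requirement $C'(t)J_{TS}(\gamma)\ge J_{TS\varphi'}(\gamma)$ rewrites as
\[
\int_0^{\gamma(t)} (TS)(r)\,[\varphi'(r)-C'(t)]\,dr \le 0,
\]
and holds because $(TS)(r)\le 0$ (since $T\ge 0$, $S\le 0$) and $\varphi'(r)-C'(t)\ge 0$ by hypothesis (3).

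The main technical effort is the correct identification and manipulation of the singular measure on the moving surface $\partial B(t)$; this was carried out in Proposition \ref{p62} for constant $\gamma=\beta$ and transfers verbatim here because the jump geometry is identical. Justifying the moving-domain integration by parts under the regularity assumed in Definition \ref{subsuper} is routine.
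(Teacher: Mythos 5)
Your proposal is correct and follows essentially the same route as the paper: the same splitting of \eqref{seineq} into absolutely continuous and singular parts, the same identification of the surface measure $[(TS\varphi)(\gamma)-J_{TS\varphi'}(\gamma)]\,\H^{d-1}_{|\partial B}$ carried over from Proposition \ref{p62}, the same use of Green's formula and of condition \emph{\ref{tangencia}} to cancel the boundary flux against $(TS\varphi)(\gamma)$, and the same final reduction to $C'(t)J_{TS}(\gamma)\ge J_{TS\varphi'}(\gamma)$, which holds by condition \emph{\ref{tezeta}} since $TS\le 0$. No substantive differences.
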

\begin{Remark}
{\rm
Informally speaking, condition \emph{\ref{tangencia}} above means that the profile is concave in a neighborhood of the interface and the contact angle is vertical. See Remark \ref{contacto} in that regard.
}
\end{Remark}
\begin{proof}
 Let $0 \le \phi \in \mathcal{D}(Q_T)$, $T \in \mathcal{T}^+$ and $S\in \mathcal{T}^-$. We compute each term in \eqref{seineq} of Definition \ref{subsuper} separately. First, arguing as in the proof of Proposition \ref{p62}, 
\begin{equation}
\label{t1}
h_T(W,DS(W))^s + h_S(W,DT(W))^s = \left[(TS\varphi)(\gamma(t)) - J_{TS \varphi'}(\gamma(t)) \right] \H_{| \partial B}^{d-1}.
\end{equation}
We compute also
\begin{equation}
\label{t2}
\partial_t J_{TS}(W) = W_t T(W) S(W) \chi_B + C'(t) J_{TS}(\gamma(t)) \H_{| \partial B}^{d-1}.
\end{equation}
Moreover, letting $\z = \a(W,\nabla W)$, 
\begin{equation}
\label{t3}
\begin{array}{ll}
\displaystyle \int_{Q_T} \z \nabla \phi T(W)S(W) \ dxdt = & \displaystyle -\int_{Q_T}  \phi\,  \div (\z T(W) S(W)) \ dxdt
\\ \\
& \displaystyle +\int_0^T \int_{\partial B} [\z T(W)S(W)\cdot \nu^B]\phi\,  d \H^{d-1}\ dt.
\end{array}
\end{equation}
Collecting \eqref{t1}--\eqref{t3}, \eqref{seineq} reads now as follows:
\[
\begin{split}
 \int_0^T & \left[(TS\varphi)(\gamma(t)) - J_{TS \varphi'}(\gamma(t)) \right] \int_{\partial B} \phi(t) d\H^{d-1}\ dt
\\
&+\int_{Q_T} \left[h_S(W,DT(W))^{ac} + h_T(W,DS(W))^{ac} Ê\right] \phi \ dt
\\
\ge & \int_{Q_T}  \phi\,  \div (\z T(W) S(W)) \ dxdt - \int_0^T \int_B \phi W_t T(W) S(W) \ dxdt
\\
& -\int_0^T \int_B [\z T(W)S(W)\cdot \nu^B]\phi \, d \H^{d-1}\ dt
 - \int_0^T J_{TS}(\gamma(t)) C'(t) \int_{\partial B} \phi(t) d \H^{d-1}\ dt.
\end{split}
\]
Our aim is to show that this holds true indeed. It is equivalent to check the above inequality for the absolutely continuous and singular parts separately.  As
\[
\begin{split}
h_S(W,DT(W))^{ac} & = S(W) h(W,\nabla T(W)) 
\\
& = S(W) \nabla T(W) \a(T(W),\nabla T(W)) = S(W) \nabla T(W) \a(W,\nabla T(W))
\end{split}
\]
(and in the same way for the other term) we have that
$$
h_S(W,DT(W))^{ac} + h_T(W,DS(W))^{ac} = \a(W,\nabla W) \nabla (S(W)T(W)).
$$
Thus, the inequality for the absolutely continuous parts reduces to
$$
\int_{Q_T}  \phi T(W) S(W) \div \z \ dxdt - \int_{Q_T}  \phi W_t T(W) S(W) \ dxdt \le 0.
$$
Then it suffices to show that 
$$
W_t \le \div \a(W,\nabla W)\quad \mbox{a.e. in} \ B(t), \quad \mbox{for a.e.}\ t \in (0,T).
$$
Now we discuss the inequality relating the singular parts (note that when $\gamma(t)=0$ there is no singular part at all, due to the fact that $\varphi(0)=0$). For that we compute
$$
[\z T(W)S(W) \cdot \nu^B] = \varphi(\gamma(t)) T(\gamma(t))S(\gamma(t))
$$
using condition \emph{\ref{tangencia}}. Then the inequality for the singular parts is equivalent to
$$
 - J_{TS\varphi'}(h(t)) \ge - J_{TS}(h(t)) C'(t) \quad \mbox{for a.e.}\ t \in (0,T).
$$
Thanks to condition \emph{\ref{tezeta}}, this is automatically fulfilled.
\end{proof}

In this way we are able to sharpen and extend the program that was introduced in \cite{ARMAsupp}.  We construct now sub-solutions spreading at any prefixed rate strictly lower than $s$. This is crucial as it is less demanding on $\psi$ to construct such than to construct sub-solutions attaining the rate given by $s$, see Remark \ref{c=s} below.

\begin{Proposition}
\label{1dcont}
Let $d=1$ and let $\psi$ satisfy Assumptions \ref{as1}. Let $c<s$ and $R>0$. Then the following statements hold:

\emph{1)} there exists some $A>0$ (depending on $\psi$, $c/s,\,L,\, R$) 
such that
$$
W(t,x)= e^{-At} \sqrt{R^2(t)-|x|^2} \chi_{B(0,R(t))},\quad R(t)=R+ct
$$
is a sub-solution of \eqref{template} in $Q_T$ for every $T>0$.

\emph{2)} Let $0<\theta<1$ and assume that 
\begin{equation}
\label{tetatrans}
\lim_{r \to \infty} r^\frac{1}{1-\theta} \psi'(r) = 0.
\end{equation}
Fix $\gamma_0>0$. Then there exists some $A>0$ (depending on $\psi$, $c/s,\, L,\, \gamma_0,\, R$) 
 such that
$$
W(t,x)= \left\{e^{-At} (R^2(t)-|x|^2)^\theta+\gamma_0 e^{-At}\right\} \chi_{B(0,R(t))},\quad R(t)=R+ct
$$
is a sub-solution of \eqref{template} in $Q_T$ for every $T>0$.
\end{Proposition}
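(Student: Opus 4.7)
The plan is to apply Proposition \ref{equi} to both profiles. For Part 1 the boundary trace $\gamma(t):=W(t,\cdot)|_{\partial B}\equiv 0$, so the first alternative of Proposition \ref{equi} applies and only the interior distributional inequality \eqref{sinside} needs to be verified. For Part 2 the trace is $\gamma(t)=\gamma_0 e^{-At}>0$, so I would use the second alternative: condition \emph{\ref{tezeta}} reduces to $c\le\varphi'(z)\equiv s$ (which holds by $c<s$, $\varphi(z)=sz$), and condition \emph{\ref{tangencia}} (tangency of the flux at the interface) is a direct consequence of Assumptions \ref{as1}.\emph{\ref{infinito}}, since $W_x/W\to\mp\infty$ as $x\to\pm R(t)$ forces $\psi(W_x/W)\to\mp 1$, and hence $\a(W,W_x)=sW\psi(W_x/W)\to\mp s\gamma(t)=\mp\varphi(\gamma(t))$ in agreement with the required outward-normal trace.

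To verify the interior inequality in Part 1 I would set $\rho(t,x)=\sqrt{R(t)^2-x^2}$ and $r=W_x/W=-x/\rho^2$ and compute directly
\[
\frac{1}{W}\bigl(W_t-(sW\psi(r))_x\bigr)=-A+\frac{cR(t)}{\rho^{2}}-sr\psi(r)+s\psi'(r)\frac{R(t)^2+x^2}{\rho^4}.
\]
Assumptions \ref{as1}.\emph{\ref{infinito}} yield $sr\psi(r)=s|x|/\rho^{2}+o(|x|/\rho^2)$ as $|r|\to\infty$, while Assumptions \ref{as1}.\emph{\ref{1dcaida}} together with $|r|\sim R(t)/\rho^2$ near the interface give $s\psi'(r)(R^2+x^2)/\rho^4=o(1/\rho^{2})$. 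Therefore, in a neighbourhood of the interface the dominant term is $(c-s)R(t)/\rho^2$, which tends to $-\infty$ since $c<s$; outside this neighbourhood $\rho$ is bounded below and all quantities are bounded uniformly on $Q_T$. A sufficiently large $A$ (depending on $\psi,c/s,L,R$) then renders the whole expression non-positive throughout the support. The analogous calculation for Part 2 with $\sigma=\rho^2$ and $W=e^{-At}(\sigma^\theta+\gamma_0)$ produces a dominant scale $\sigma^{\theta-1}$ in place of $\rho^{-2}$: one has $r\sim -2\theta x\sigma^{\theta-1}/\gamma_0$, $r_x=O(\sigma^{\theta-2})$, the leading balance reads $(c-s)\cdot O(\sigma^{\theta-1})$, and the error term $s\psi'(r)r_x$ is $o(\sigma^{\theta-1})$ precisely when $\psi'(r)=o(\sigma)$ as $\sigma\to 0^+$. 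Since $|r|\sim\sigma^{\theta-1}$, the hypothesis \eqref{tetatrans} translates exactly into this bound, and the choice of $A$ closes the argument.

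The delicate point is the control of the $\psi'$-term in the interior inequality, whose sign is unfavourable and which does not improve as one approaches the interface. In Part 1 the profile vanishes like $\rho$ there and the baseline decay in Assumptions \ref{as1}.\emph{\ref{1dcaida}} is exactly enough; in Part 2 the trace is strictly positive and the profile exhibits only a H\"older-type behaviour $\sigma^\theta$ at the boundary, which degrades the estimate and is what forces the sharper decay \eqref{tetatrans} on $\psi'$. All other terms are routine once this balance is identified.
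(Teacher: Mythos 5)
Your proof is correct and follows essentially the same route as the paper: reduction via Proposition \ref{equi} to the pointwise inequality $W_t\le (sW\psi(W_x/W))_x$ inside the support, followed by an analysis near the interface in which the term $(c-s)$ times the blow-up rate of $|W_x/W|$ dominates (by Assumptions \ref{as1}.\emph{\ref{infinito}}) and the unfavourable $\psi'$-term is absorbed using Assumption \ref{as1}.\emph{\ref{1dcaida}} in Part 1 and \eqref{tetatrans} in Part 2 — the paper carries out the identical balance in the variables $\lambda=|x|/R(t)$ and $r=\lambda/(R(t)(1-\lambda^2))$, which also makes the uniformity of the bound in $t$ (needed since individually $cR(t)/\rho^2$ and $sr\psi(r)$ are not bounded away from the interface, only their combination) fully explicit. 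Your verification of the flux-trace condition \emph{\ref{tangencia}} of Proposition \ref{equi} in Part 2 is a welcome addition that the paper leaves implicit.
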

\begin{proof}
Let us define 
$$
W(t,x)= \left(\alpha(t) (R^2(t)-|x|^2)^\theta+\gamma(t)\right) \chi_{B(0,R(t))},\quad R(t)=R+ct
$$
for some functions $\alpha,\gamma >0$ to be determined, such that $\alpha', \gamma' \le 0$.
Thanks to Proposition \ref{equi} we can restrict ourselves to check that $W_t \le (s W\psi(W_x/W))_x$ at $B(0,R(t))$ for each $t>0$. We are to show that
\begin{equation}
\label{s1}
W_t \le s W_x \psi(W_x/W) + s  \psi'( W_x/W) \left\{W_{xx}- (W_x)^2/W \right\}
\end{equation}
holds at $B(0,R(t))$ for every $t>0$. Neglecting the factor $\chi_{B(0,R(t))}$ in what follows, we compute
\[
\begin{split}
W_t & = \alpha'(t) (R^2(t)-|x|^2)^\theta + \frac{2 \theta \alpha(t) c R(t)}{(R^2(t)-|x|^2)^{1-\theta}} + \gamma'(t),
\\
W_x  & =- \frac{2 \theta\alpha(t) x}{(R^2(t)-|x|^2)^{1-\theta}},
\\
W_{xx} & = \frac{2 \theta (2 \theta-1) \alpha(t) x-2 \theta \alpha(t)R^2(t)}{(R^2(t)-|x|^2)^{2-\theta}}.
\end{split}
\]
 Due to parity, it suffices to show \eqref{s1} only for non-negative values of $x$.

 \emph{Case 1)} To prove the first statement we set $\gamma(t)=0$ and $\theta = 1/2$. Then we introduce $\lambda \in [0,1)$ so that $x = \lambda R(t)$. In terms of $\lambda$, \eqref{s1} reads now as follows:
\[
\begin{split}
\alpha'(t) & R(t) \sqrt{1-\lambda^2} + \frac{c \alpha(t)}{\sqrt{1-\lambda^2}}
\le -\frac{s \lambda \alpha(t)}{\sqrt{1-\lambda^2}} \psi \left(- \frac{ \lambda}{R(t) (1-\lambda^2)} \right)
\\
& + s  \psi'\left(- \frac{ \lambda}{R(t) (1-\lambda^2)}\right) \left\{-\frac{\alpha(t)(1+\lambda^2)}{R(t) (1-\lambda^2)^{3/2}} \right\}.
\end{split}
\]
This can be rearranged as
\[
\begin{split}
\frac{\alpha'(t)}{\alpha(t)} & \le - \frac{s}{R^2(t)} \frac{1+\lambda^2}{(1-\lambda^2)^2} \psi' \left( \frac{ \lambda}{R(t)(1-\lambda^2)}\right) 
\\
& + \frac{1}{R(t)(1-\lambda^2)} \left( s \lambda \psi\left( \frac{ \lambda}{R(t)(1-\lambda^2)}\right) -c \right).
\end{split}
\]
We introduce a new variable $r:=r(\lambda)=\frac{\lambda}{R(t)(1-\lambda^2)}$. Note that when $\lambda$ varies from $0$ to $1$, $r$ varies from $0$ to $\infty$. Hence, it suffices to show that
$$
\frac{\alpha'(t)}{\alpha(t)} \le s \left( r \psi(r) - \frac{c r}{\lambda s} - \frac{1+\lambda^2}{\lambda^2}r^2 \psi'(r)\right)
$$
for any $\lambda \in [0,1)$. If we ensure that the right hand side can be bounded from below by some constant $-A$, then the choice $\alpha(t)=e^{-At}$ would suit our purposes. The combination of terms at the right hand side is clearly bounded from below except maybe when $r \gg 1$. In order to see what happens in that case, we pick $\epsilon \in (0,1)$ such that $1-\epsilon=c/s$. Let us write
$$
 r \psi(r) - \frac{c r}{\lambda s} = r \left(\psi(r) - \frac{1-\epsilon/2}{\lambda} \right) + \frac{\epsilon r}{2 \lambda}.
$$
Provided that $\lambda>1-\epsilon/2$, the first term above is non-negative for $r$ large enough. Thus, if we show that
$$
\lim_{\lambda \to 1} \frac{\epsilon r}{2 \lambda}- \frac{1+\lambda^2}{\lambda^2}r^2 \psi'(r) \ge -A
$$
for some $A>0$ we will be done. It suffices to ask for
$$
\lim_{\lambda \to 1} \frac{\epsilon \lambda}{1+\lambda^2} \ge \lim_{\lambda \to 1} r \psi'(r).
$$
Recall that $\psi'\ge 0$ and note that we need to ensure the above inequality independently of the actual value of $\epsilon$ (thus allowing to get $c<s$ as close to $s$ as desired). Then we must impose the following condition:
$$
 \lim_{r \to \infty} r \psi'(r)=0.
$$
But this is automatically satisfied {thanks to Assumption \ref{as1}.\emph{\ref{1dcaida}}}. In this way our first statement follows.

 \emph{Case 2)} We introduce $\lambda \in [0,1)$ so that $x = \lambda R(t)$. In terms of $\lambda$, \eqref{s1} reads:
\[
\begin{split}
\alpha'(t) R^{2 \theta}(t) &(1-\lambda^2)^\theta + \frac{2 \theta c \alpha(t) R(t)}{R^{2-2\theta}(t) (1-\lambda^2)^{1-\theta}}+\gamma'(t)
\\
\le & -\frac{2 \theta s \lambda \alpha(t)R(t)}{R^{2-2\theta}(t) (1-\lambda^2)^{1-\theta}} \psi (I_1)
\\
&+ s \psi'(I_1)
\times \left\{-\frac{2 \theta (2 \theta-1)\alpha(t)\lambda^2 R^2(t)- 2 \theta \alpha(t) R^2(t)}{R^{4-2\theta}(t) (1-\lambda^2)^{2-\theta}}\right.
\\
& \left.-\frac{4 \theta^2 \alpha^2(t) \lambda^2R^2(t)}{R^{4-4\theta}(t) (1-\lambda^2)^{2-2\theta}[\alpha(t)R^{2 \theta}(t) (1-\lambda^2)^{\theta}+\gamma(t)]} \right\},
\end{split}
\]
being 
$$
I_1=- \frac{2 \theta \alpha(t) \lambda R(t)}{\alpha(t) R^2(t) (1-\lambda^2)+\gamma(t)R^{2-2\theta}(t)(1-\lambda^2)^{1-\theta}}.
$$ 
Thus, we will be done if we are able to check the following inequality:
\begin{equation}
\begin{array}{l}
\label{Rt}
\displaystyle \frac{\alpha'}{\alpha}\le \frac{s}{R^{2\theta}(t) (1-\lambda^2)^\theta} \left\{-\frac{\gamma'}{\alpha s} + \frac{2 \theta}{R^{1-2\theta}(t) (1-\lambda^2)^{1-\theta}}\right.\left(\lambda \psi(I_2) - \frac{c}{s} \right) 
\\ \\
 \displaystyle   + \psi'(I_2) \!\! \left.\left(\frac{2 \theta[(2 \theta -1)\lambda^2-1]}{R^{2-2\theta}(t)(1-\lambda^2)^{2-\theta}}-\! \frac{4 \theta^2 \lambda^2}{R^{2-2\theta}(t)(1-\lambda^2)^{2-\theta}+ R^{2-4 \theta}(t)(1-\lambda^2)^{2-2\theta} \frac{\gamma}{\alpha}} \right)\!\right\}
\end{array}
\end{equation}
with 
$$
I_2=-I_1=\frac{2 \theta \lambda}{R(t)(1-\lambda^2)+\frac{\gamma}{\alpha} R^{1-2 \theta}(t)(1-\lambda^2)^{1-\theta}}.
$$ 
As in the previous case, it suffices to ensure that the right hand side of the previous inequality is bounded from below by some constant $-A$. Let us choose $\gamma(t)=\gamma_0\alpha(t)$ with $\gamma_0>0$. Now we let $c/s=1-\epsilon$ for $\epsilon \in (0,1)$ and decompose
$$
\lambda \psi(I_2)-c/s=\lambda \left(\psi(I_2) - \frac{1-\epsilon/2}{\lambda}\right) +\epsilon/2. 
$$
The first term above is non-negative for $\lambda$ close enough to $1$. Taking this into account, it suffices to have 
\[
\begin{split}
\lim_{\lambda \to 1}  \psi'(I_2) & \left(\frac{2 \theta[(2 \theta -1)\lambda^2-1]}{R^{2-2\theta}(t)(1-\lambda^2)^{2-\theta}}- \frac{4 \theta^2 \lambda^2}{R^{2-2\theta}(t)(1-\lambda^2)^{2-\theta}+ R^{2-4 \theta}(t)(1-\lambda^2)^{2-2\theta} } \right)
\\
& -\frac{\alpha' \gamma_0}{\alpha s}+ \lim_{\lambda \to 1} \frac{\epsilon \theta}{R^{1-2\theta}(t)(1-\lambda^2)^{1-\theta}} \ge 0
\end{split}
\]
in order to ensure that the right hand side of \eqref{Rt} is bounded from below by some constant $-A$. Neglecting the second term above causes no loss of generality. Hence, we ask for
\[
\begin{split}
\lim_{\lambda \to 1}  \frac{\epsilon \theta}{R^{1-2\theta}(t)(1-\lambda^2)^{1-\theta}}
\ge \lim_{\lambda \to 1} & \left\{\frac{2 \theta }{R^{2-4\theta}(t)(1-\lambda^2)^{2-2\theta}} \psi'(I_2) \right.
\\
& \times \left.\left(\frac{2 \theta \lambda^2}{R^{2\theta}(t) (1-\lambda^2)^\theta +1}- \frac{(2 \theta -1)\lambda^2-1}{R^{2\theta}(t)(1-\lambda^2)^\theta} \right)\right\}.
\end{split}
\]
We notice again that the right hand side above is non-negative and that we need to ensure the above inequality independently of the actual value of $\epsilon$. Then the following condition must be imposed:
$$
\lim_{\lambda \to 1}  \psi'\left(\frac{1}{(1-\lambda^2)^{1-\theta}}\right) \frac{1}{1-\lambda^2}=0.
$$
This is the same as \eqref{tetatrans}, thus our statement is granted.
\end{proof}
\begin{Remark}
\label{c=s}
{\rm
Examining carefully the proof of the previous statement we note the following:
\begin{enumerate}
\item Provided that 
$$
 d(r) =O(1/r)\quad \mbox{and}\quad \psi'(r) =O(1/r^2) \quad \mbox{as}\quad r \to \infty, 
 $$
we can take $c=s$ in the first point of Proposition \ref{1dcont}.

\item  Provided that 
\begin{equation}
\label{disccond}
 d(r) =O(1/r)\quad \mbox{and}\quad \psi'(r) =O(r^\frac{\theta-2}{1-\theta}) \quad \mbox{as}\quad r \to \infty, 
\end{equation}
we can take $c=s$ in the second point of Proposition \ref{1dcont}.
\end{enumerate}
There is a value of $\theta$ such that \eqref{disccond} holds for every model of the form \eqref{pmodelos}, except for the case of Wilson's model \eqref{Wilson}.
}
\end{Remark}
Some of these results can be extended to higher dimensions under Assumptions \ref{giso}.
\begin{Proposition}
\label{ddcont}
Let $d>1$ and let $\psi$ satisfy Assumptions \ref{as1} and \ref{giso}. Let $c<s$ and $R>0$. Assume in addition that
\begin{equation}
\label{gprimacond}
\lim_{r \to \infty} r^2 g'(r)=-1.
\end{equation}
Then, there exists some $A>0$ (depending on $g$, $c/s,\, L,\, R$) such that
$$
W(t,x)= e^{-At} \sqrt{R^2(t)-|x|^2} \chi_{B(0,R(t))},\quad R(t)=R+ct
$$
is a sub-solution of \eqref{template} in $Q_T$ for every $T>0$.
\end{Proposition}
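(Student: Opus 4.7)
The plan is to adapt the strategy of Case 1 of Proposition \ref{1dcont} to the radial setting provided by Assumptions \ref{giso}. Since $W(t,\cdot)$ vanishes on $\partial B(0,R(t))$, we can invoke the first alternative of Proposition \ref{equi} with $\gamma(t)\equiv 0$; it is then enough to verify the pointwise differential inequality $W_t\le \div\a(W,\nabla W)$ in the interior of the ball. Writing $\rho=|x|$, the function $W$ is radial, with $\nabla W/W = -x/(R^2-\rho^2)$ and $|\nabla W/W|=r:=\rho/(R^2-\rho^2)$ (here $R=R(t)$). By Assumptions \ref{giso} the flux is radial, so the divergence collapses to $h'(\rho)+(d-1)h(\rho)/\rho$ with $h(\rho)=W_\rho\, g(r)$. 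After substituting the explicit expressions for $W_t$, $W_\rho$ and $W_{\rho\rho}$ and multiplying through by $\sqrt{R^2-\rho^2}>0$, the inequality becomes
\[
A \ge \Psi(\lambda,t) := \frac{c}{R(1-\lambda^2)} + \frac{s\,g(r)}{R^2(1-\lambda^2)^2} + \frac{s\lambda(1+\lambda^2)g'(r)}{R^3(1-\lambda^2)^3} + \frac{s(d-1)g(r)}{R^2(1-\lambda^2)},
\]
where $\lambda:=\rho/R\in [0,1)$ and $r=\lambda/(R(1-\lambda^2))$. So the proof reduces to exhibiting a finite upper bound for $\Psi$ on $[0,1)\times[0,T]$, and taking $A$ equal to it.

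On any closed sub-range $[0,\lambda_1]$ with $\lambda_1<1$, $r$ stays bounded and $\Psi$ is manifestly continuous and bounded. The main obstacle is the limit $\lambda\to 1^-$ (equivalently $r\to\infty$), where the first three summands of $\Psi$ all blow up and must be shown to cancel favorably. This is where the hypothesis $\lim_{r\to\infty}r^2g'(r)=-1$ is crucial. Combined with $\lim_{r\to\infty} r\, g(r)=1$ from Remark \ref{gprops}, write $g(r)=(1+\epsilon_1(r))/r$ and $g'(r)=(-1+\epsilon_2(r))/r^2$ with $\epsilon_1,\epsilon_2\to 0$ as $r\to\infty$. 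Using $1/r=R(1-\lambda^2)/\lambda$, the second and third terms of $\Psi$ simplify to
\[
\frac{s\,[-\lambda^2+\epsilon_1+(1+\lambda^2)\epsilon_2]}{R\lambda(1-\lambda^2)},
\]
and adding the first summand rearranges $\Psi$ as
\[
\Psi(\lambda,t)=\frac{c-s\lambda}{R(1-\lambda^2)}+\frac{s[\epsilon_1+(1+\lambda^2)\epsilon_2]}{R\lambda(1-\lambda^2)}+\frac{s(d-1)g(r)}{R^2(1-\lambda^2)}.
\]

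Since $c<s$, for $\lambda$ close enough to $1$ the numerator $c-s\lambda$ is bounded above by a strictly negative constant, giving a leading behavior $\sim -\,\mathrm{const}/(1-\lambda^2)\to -\infty$. The $\epsilon$-contribution is of the form $o(1)/(R(1-\lambda^2))$ and is therefore negligible compared with the leading term; the final summand is bounded because $g(r)\to 0$ (it tends in fact to $s(d-1)/R$). Hence $\Psi(\lambda,t)\to -\infty$ as $\lambda\to 1^-$, and $\Psi$ is bounded above on $[0,1)$. The $t$-dependence enters only through $R=R(t)\ge R(0)=R$, and inspection shows that the sup in $\lambda$ is non-increasing in $R(t)$, so $A:=\sup_{[0,1)\times[0,T]}\Psi$ is finite and depends only on $g$, $c/s$, $L$ (which is $1$ by the normalization of the document) and $R$, which is exactly what was claimed.
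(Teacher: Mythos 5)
Your proof is correct and follows essentially the same route as the paper: reduce via Proposition \ref{equi} to the pointwise inequality $W_t\le\div\a(W,\nabla W)$ inside the ball, pass to $\lambda=|x|/R(t)$, and use $rg(r)\to 1$ together with \eqref{gprimacond} to exhibit the crucial cancellation that makes your $\Psi$ tend to $-\infty$ like $(c-s)/(R(t)(1-\lambda^2))$ as $\lambda\to 1$ --- your $\epsilon_1,\epsilon_2$ bookkeeping is just a more explicit version of the paper's limit computation $\lim_{\lambda\to 1}I=s-c>0$. The only cosmetic difference is your closing appeal to monotonicity of $\sup_\lambda\Psi$ in $R(t)$, which is asserted without justification but is also unnecessary (the upper bound is uniform in $R(t)\ge R$ by the same estimates), and the paper is equally terse on this point.
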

\begin{proof}
Let us define 
$$
W(t,x)= \alpha(t) \sqrt{R^2(t)-|x|^2} \chi_{B(0,R(t))},\quad R(t)=R+ct
$$
for some function $\alpha$ to be determined, such that $\alpha'\le 0$. Thanks to Proposition \ref{equi} we can restrict ourselves to check that $W_t \le (s W\psi(W_x/W))_x$ at $B(0,R(t))$ for each $t>0$. We are to show that
\begin{equation}
\label{s2}
W_t \le s \Delta W g\left(\left|\frac{\nabla W}{W}\right|\right) + s  g'\left(\left|\frac{\nabla W}{W}\right|\right) \left(\frac{\nabla W D^2 W \nabla W^T}{W|\nabla W|} - \frac{|\nabla W|^3}{W^2} \right)
\end{equation}
holds at $B(0,R(t))$ for every $t>0$, being $D^2 W$ the Hessian matrix of $W(t)$. Neglecting the factor $\chi_{B(0,R(t))}$ in what follows, we compute
\[
\begin{split}
W_t & = \alpha'(t) \sqrt{R^2(t)-|x|^2} + \frac{\alpha(t) c R(t)}{\sqrt{R^2(t)-|x|^2}},
\\
\partial_i W &= - \frac{\alpha(t) x_i}{\sqrt{R^2(t)-|x|^2}},\quad \frac{\nabla W}{W}= - \frac{x}{R^2(t)-|x|^2},
\\
\partial_{ij}^2 W &= -\frac{\alpha(t)[R^2(t)\delta_{ij} - |x|^2 \delta_{ij} + x_ix_j]}{(R^2(t)-|x|^2)^{3/2}},\quad \Delta W = - \frac{\alpha(t)[d R^2(t) +(1-d) |x|^2]}{(R^2(t)-|x|^2)^{3/2}}.
\end{split}
\]
We substitute into \eqref{s2} to obtain, after rearranging a bit,
\[
\begin{split}
\frac{\alpha'(t)}{\alpha(t)}\le & -\frac{cR(t)}{R^2(t)-|x|^2} - s g\left(\frac{|x|}{R^2(t)-|x|^2}\right)\frac{dR^2(t)+(1-d)|x|^2}{(R^2(t)-|x|^2)^2}
\\
& -s g'\left(\frac{|x|}{R^2(t)-|x|^2}\right) \frac{|x|^3+|x|R^2(t)}{(R^2(t)-|x|^2)^3}.
\end{split}
\]
This depends on $x$ only through $|x|$. Then we introduce $\lambda \in [0,1)$ so that $|x| = \lambda R(t)$. In terms of $\lambda$, the inequality to be satisfied reads:
\begin{equation}
\nonumber
\begin{array}{rl}
\displaystyle \frac{\alpha'(t)}{\alpha(t)}\le   \frac{1}{R(t)(1-\lambda^2)} &  \displaystyle \left\{ -c -s \frac{d+(1-d)\lambda^2}{R(t)(1-\lambda^2)} g \left( \frac{\lambda}{R(t)(1-\lambda^2)}\right) 
\right.
\\ \\
& \displaystyle \left.
 - s  \frac{\lambda^3 + \lambda}{R^2(t)(1-\lambda^2)^2}g'\left( \frac{\lambda}{R(t)(1-\lambda^2)}\right)\right\}.
\end{array}
\end{equation}
Our aim is to find a lower bound for the right hand side in terms of some constant $-A$, which would imply our result. Let us write $I$ for the term inside braces; to get such a bound, it suffices to show that  $\lim_{\lambda \to 1}I> 0$. In fact, 
$$
\lim_{\lambda \to 1} I = -c -s - s \lim_{\lambda \to 1} \frac{\lambda + \lambda^3}{R^2(t)(1-\lambda^2)^2}g'\left( \frac{\lambda}{R(t)(1-\lambda^2)}\right)= s-c>0 
$$
due to (\ref{gprimacond}) and property \eqref{ginfinito} in Remark \ref{gprops}. This concludes the proof.
\end{proof}
\begin{Remark}
{\rm
Note that \eqref{gprimacond} is satisfied by every equation of the form \eqref{pmodelos}.
}
\end{Remark}
The previous results allow us to track the evolution of the support in the same vein as in \cite{ARMAsupp}.
\begin{Corollary}
\label{c62}
(\emph{Evolution of the support})
Let $\psi$ satisfy Assumptions \ref{as1}. Let $C\subset \R^d$ be an open set and let $0\le u_0 \in (L^1\cap L^\infty)(\R^d)$ with support equal to $\overline{C}$. Let $u(t)$ be the entropy solution of \eqref{template} with $u_0$ as initial datum. Assuming that
\begin{equation}
\label{local_sep}
\mbox{for any closed set}\ F\subset C,\ \mbox{there is}\ \alpha_F >0\, \mbox{such that}\ u_0(x) \ge \alpha_F \, \forall x \in F,
\end{equation}
and either $d=1$ or Assumptions \ref{giso} holds together with \eqref{gprimacond}, then
$$
\mbox{supp} \, u(t) = cl\left(\mbox{supp} \, u_0 \oplus B(0,st)\right).
$$
\end{Corollary}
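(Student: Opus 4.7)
The strategy combines the upper bound from Subsection \ref{supers} with the lower bounds given by the sub-solutions in Propositions \ref{1dcont} and \ref{ddcont}, via the comparison principle of Theorem \ref{theocomp} (which applies since Assumptions \ref{as1} and \ref{asphi} are in force, the latter being automatic for $\varphi(z)=sz$).

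First I handle the upper bound: applying Corollary \ref{c61} to \eqref{template}, for which $\varphi(z)=sz$ and hence $\theta=\max_{0\le z\le \|u_0\|_\infty}\varphi'(z)=s$, we obtain
$$
\mbox{supp}\,u(t)\subset cl\bigl(\mbox{supp}\,u_0\oplus B(0,st)\bigr).
$$
This already gives one inclusion.

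For the reverse inclusion, I exploit the scale invariance of \eqref{template}: since $\nabla(\beta v)/(\beta v)=\nabla v/v$ for any constant $\beta>0$, the multiplication $v\mapsto\beta v$ maps sub-solutions of \eqref{template} into sub-solutions. Fix $x_0\in C$ and any $c\in(0,s)$. Since $C$ is open there exists $R>0$ with $\overline{B(x_0,R)}\subset C$; by the positivity hypothesis \eqref{local_sep} applied to $F=\overline{B(x_0,R)}$ we obtain $u_0\ge \alpha_F>0$ on this ball. Depending on whether $d=1$ or $d>1$ (in which case Assumptions \ref{giso} and \eqref{gprimacond} are used), Proposition \ref{1dcont}.(1) or Proposition \ref{ddcont} produces a sub-solution of \eqref{template} of the form
$$
W(t,x)=e^{-At}\sqrt{R^2(t)-|x-x_0|^2}\,\chi_{B(x_0,R(t))},\qquad R(t)=R+ct,
$$
centered at $x_0$ (translation invariance is obvious from the structure of \eqref{template}). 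Choosing $\beta=\alpha_F/R$ we have $\beta W(0,\cdot)\le \alpha_F\le u_0$ pointwise, so by scale invariance $\beta W$ remains a sub-solution with initial data below $u_0$. Theorem \ref{theocomp} then yields $\beta W(t)\le u(t)$ a.e., and since $\beta W(t)>0$ on the open ball $B(x_0,R(t))$ we conclude
$$
B(x_0,R(t))\subset \mbox{supp}\,u(t)\qquad\forall t\ge 0.
$$

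Now I assemble the lower bound: for any $x\in C\oplus B(0,st)$, write $x=x_0+y$ with $x_0\in C$ and $|y|<st$, then choose $c\in(|y|/t,s)$ and $R>0$ small enough that $\overline{B(x_0,R)}\subset C$; the displayed inclusion with this $c,R$ gives $x\in B(x_0,R+ct)\subset\mbox{supp}\,u(t)$. Hence $C\oplus B(0,st)\subset\mbox{supp}\,u(t)$, and taking closures (noting that $\mbox{supp}\,u_0=\overline{C}$ and that $cl(C\oplus B(0,st))=cl(\overline{C}\oplus B(0,st))$) we obtain $cl(\mbox{supp}\,u_0\oplus B(0,st))\subset\mbox{supp}\,u(t)$. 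Together with the upper bound this yields the stated equality. The principal conceptual point is the recognition that one only needs sub-solutions spreading at rates $c<s$ strictly below the sound speed, since the final equality follows by letting $c\nearrow s$; this is what makes the technical content of Propositions \ref{1dcont} and \ref{ddcont} (and hence the somewhat delicate conditions on $\psi'$ at infinity) exactly what is required.
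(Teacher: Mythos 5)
Your argument is correct and follows essentially the same route as the paper: the upper inclusion from Corollary \ref{c61} (with $\theta=s$ since $\varphi(z)=sz$), and the lower inclusion by centering the sub-solutions of Proposition \ref{1dcont} (resp.\ \ref{ddcont}) at points of $C$ via translation invariance and \eqref{local_sep}, comparing through Theorem \ref{theocomp}, and letting $c\nearrow s$. The paper leaves the "suitable radius and height" step as a reference to \cite{ARMAsupp}; your explicit use of the degree-one homogeneity of the flux to rescale the sub-solution's amplitude is a clean way to fill in that detail, not a different method.
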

\begin{proof}
This is a combination of Proposition \ref{1dcont} (resp. Proposition \ref{ddcont}) and Corollary \ref{c61}. Note that both are invariant under spatial translations. Condition \eqref{local_sep} ensures that for each $y \in C$ we can find a suitable radius and a suitable height in order to apply the first point of Proposition \ref{1dcont} (resp. Proposition \ref{ddcont}) with a sub-solution centered at $y$ (as argued in the proof of Theorem 4 in \cite{ARMAsupp}), whose velocity $c$ can be chosen as close to $s$ as desired.
\end{proof}
\begin{Corollary}
(\emph{Persistence of discontinuous interfaces})
Let $d=1$ and let $\psi$ satisfy Assumptions \ref{as1}. Let $0\le u_0 \in (L^1\cap L^\infty)(\R)$ be supported on a bounded interval $[a,b]$. Let $u(t)$ be the entropy solution of \eqref{template} with $u_0$ as initial datum. Assuming that there exist some $\epsilon,\alpha >0$ such that $u_0(x)>\alpha>0$ for every $x \in (b-\epsilon,b)$ and that \eqref{disccond} holds, the left lateral trace of $u(t)$ at $x=b+st$ is strictly positive for every $t>0$. A similar statement holds for the left end of the support.
\end{Corollary}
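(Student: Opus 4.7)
The plan is to dominate $u$ from below by a compactly supported sub-solution whose right end carries a jump and moves rigidly at speed $s$. Since the hypotheses on $d$ and $\psi'$ are precisely condition \eqref{disccond} after an appropriate choice of $\theta$, Remark \ref{c=s} allows us to invoke the second item of Proposition \ref{1dcont} with $c=s$ (not merely $c<s$). Propagating this sub-solution by the comparison principle then forces the left lateral trace of $u(t)$ at the free boundary $b+st$ to remain bounded below by a positive quantity.

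Specifically, I would first fix $\theta:=\tilde\epsilon/(1+\tilde\epsilon)\in(0,1)$, for which $(\theta-2)/(1-\theta)=-2-\tilde\epsilon$, so that \eqref{disccond} holds verbatim under the standing assumption $\psi'(r)=O(r^{-2-\tilde\epsilon})$. For $R,\gamma_0>0$ to be fixed later, Proposition \ref{1dcont}.2 together with Remark \ref{c=s}.2 furnishes a constant $A>0$ for which
\[
\tilde W(t,y):=\bigl(e^{-At}(R^2(t)-y^2)^{\theta}+\gamma_0 e^{-At}\bigr)\chi_{B(0,R(t))}(y),\qquad R(t)=R+st,
\]
is a sub-solution of \eqref{template} on $Q_T$ for every $T>0$. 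The critical feature is the additive constant $\gamma_0 e^{-At}$, which survives up to the free boundary: the left and right interior traces of $\tilde W(t,\cdot)$ at $y=\pm R(t)$ both equal $\gamma_0 e^{-At}$.

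Next I would translate and fit under the initial datum: set $W(t,x):=\tilde W(t,x-(b-R))$, so that the right endpoint of $\mathrm{supp}\,W(t,\cdot)$ is exactly $b+st$ for every $t\ge 0$. Choose $R\in(0,\epsilon/2)$ and $\gamma_0$ small enough that $R^{2\theta}+\gamma_0\le \alpha$; then $W(0,\cdot)$ is supported in $[b-2R,b]\subset(b-\epsilon,b)$ and $0\le W(0,x)\le \alpha\le u_0(x)$ a.e.. Since $W(t,\cdot)\in BV(\R)\cap L^\infty(\R)$ and $\a(W,\nabla W)\in L^\infty$, Theorem \ref{theocomp}.2 applies (its hypotheses being guaranteed by Section \ref{wp}) and yields $u(t,x)\ge W(t,x)$ a.e.\ in $Q_T$.

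The conclusion is then immediate: the left lateral trace of $W(t,\cdot)$ at $x=b+st$ equals $\gamma_0 e^{-At}>0$, so the essential liminf of $u(t,\cdot)$ from the left at $b+st$ is bounded below by the same positive quantity, and hence so is the left lateral trace of $u(t)$ (which is well defined in the BV sense for the truncations of $u$). The symmetric construction, centered at $a+R$ under the analogous positivity hypothesis on $(a,a+\epsilon)$, handles the left end of the support. The one genuinely delicate step is ensuring that we may take $c=s$ rather than some $c<s$: any sub-solution with $c<s$ would be left behind by the free boundary of Corollary \ref{c62} and so could not pin down the trace at $x=b+st$; it is precisely the quantitative tail assumption $\psi'(r)=O(r^{-2-\tilde\epsilon})$ that unlocks Remark \ref{c=s} and removes this obstruction.
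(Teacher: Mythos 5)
Your proposal is correct and follows essentially the same route as the paper: translate the second sub-solution of Proposition \ref{1dcont} (with $c=s$ via Remark \ref{c=s} and a choice of $\theta$ matching \eqref{disccond}) so that its right endpoint travels along $b+st$, fit it under $u_0$ by shrinking $R$ and $\gamma_0$, and apply Theorem \ref{theocomp} to bound the trace below by $\gamma_0 e^{-At}$. The only detail worth keeping explicit is the one the paper records: the lower bound $u(t)\ge \gamma_0 e^{-At}$ on $(b-\epsilon,b)\oplus B(0,st)$ is what guarantees $u(t)$ is locally $BV$ there, so that the lateral trace is actually well defined.
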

\begin{proof}
This is similar to the previous one, but we use the second statement in Proposition \ref{1dcont} this time, taking $c=s$ thanks to Remark \ref{c=s}. Under the present assumptions we may choose $R=\epsilon/2$ and we will be able to find some values $\gamma_0, A>0$ and $0<\theta<1$ such that the corresponding sub-solution in Proposition \ref{1dcont} centered at $x=b-\epsilon/2$ lies below of $u_0$ for $t=0$. Thanks to Theorem \ref{theocomp}, $u(t)\ge \gamma_0 e^{-At}$ for  a.e. $x \in (b-\epsilon,b)\oplus B(0,st)$ and any $t>0$. This implies in particular that $u(t) \in BV ((b-\epsilon,b)\oplus B(0,st))$. Thus, we can compute the left lateral trace at $x=b+st$ as
$$
u(t,b+st)^- = \lim_{\lambda \to 0} \frac{1}{\lambda}\int_{b+st-\lambda}^{b+st} u(t,r)\, dr \ge \gamma_0 e^{-At}
$$
for any $t>0$.
\end{proof}


\section{Rankine--Hugoniot relations}
\label{rhcond}
The idea of this section is to generalize in a suitable way some results in \cite{leysalto, Vicent2013}. This will provide a proof for \emph{\ref{p3}} in Theorem \ref{Main2}. In so doing we will notice that such results hold for a class of equations which is wider than \eqref{template2}. In fact, in order that the main results in this section hold, what is really essential is that a function $\varphi$ can be defined by means of \eqref{deffi} satisfying a number of suitable properties. No further structure assumptions need to be imposed on the flux. The main results below are Proposition \ref{7.1}, which reformulates the entropy inequalities \eqref{eineq} as separate requirements on the jump and Cantor parts of the spatial derivative (a fact that was observed in greater generality in \cite{Vicent2013}), and Proposition \ref{8.1}, stating that the ``jump part'' of the entropy inequalities \eqref{eineq} is fulfilled if the flux at both sides of the discontinuity satisfies a certain constraint (encoding essentially the fact that contact angles must be vertical) and, given that this holds, phrasing the Rankine--Hugoniot relation in terms of $\varphi$.

We start by introducing some notation suited to this purpose (see also \cite{leysalto}). Assume that $u \in BV_{loc}(Q_T)$. Let $\nu:=\nu_u=(\nu_t,\nu_x)$ be the unit normal to the jump set of $u$ and $\nu^{J_{u(t)}}$ the unit normal to the jump set of $u(t)$. We write $[u](t,x):=u^+(t,x)-u^-(t,x)$ for the jump of $u$ at $(t,x)\in J_u$ and $[u(t)](x):=u(t)^+(x)-u(t)^-(x)$ for the jump of $u(t)$ at the point $x \in J_{u(t)}$. We assume that $u^+>u^-$ in what follows (this determines if $\nu_x$ points inwards or outwards according to the conventions on Subsection \ref{sect:bv}); we also assume $u^-\ge 0$. The following result was proved in \cite{leysalto}.
\begin{Lemma}
Let $u\in BV_{loc}(Q_T)$ and let $\z \in L^\infty([0,T]\times \R^d,\R^d)$ be such that $u_t = \div \z$ in $\mathcal{D}'(Q_T)$. Then
$$
\mathcal{H}^d \left(\{(t,x) \in J_u/ \nu_x(t,x)=0\}\right)=0.
$$
\end{Lemma}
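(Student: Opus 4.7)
The plan is to argue by contradiction via a blow-up argument at an $\mathcal{H}^d$-generic point of $E:=\{(t,x)\in J_u/ \nu_x(t,x)=0\}$, exploiting the incompatibility between a ``purely temporal'' jump in $u$ and a right-hand side of the form $\div \z$ with $\z \in L^\infty$. The starting point is the standard blow-up property of $BV$ functions at jump points (see \cite{Ambrosio}): at $\mathcal{H}^d$-a.e.\ $(t_0,x_0) \in J_u$, the rescalings $u_r(s,y):= u(t_0+rs,\, x_0+ry)$ converge, as $r \searrow 0$, in $L^1_{loc}(\R^{d+1})$ to the jump profile $u^-\chi_{\{(s,y)\cdot \nu<0\}}+u^+\chi_{\{(s,y)\cdot \nu>0\}}$, with $\nu=\nu_u(t_0,x_0)$. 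Supposing for contradiction that $\mathcal{H}^d(E)>0$, one such blow-up point can be selected inside $E$; there $\nu=(\pm 1,0,\ldots,0)$, so the limit profile reduces to $u_\infty(s,y)=u^-\chi_{\{s<0\}}+u^+\chi_{\{s>0\}}$, independent of the spatial variable~$y$.

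Next I would propagate the equation through the rescaling. Setting $\z_r(s,y):=\z(t_0+rs,\, x_0+ry)$, a direct chain-rule computation yields $\partial_s u_r = {\rm div}_y \z_r$ in $\mathcal{D}'(\R^{d+1})$, while $\|\z_r\|_\infty\le \|\z\|_\infty$. Along a subsequence, Banach--Alaoglu supplies $\z_{r_n}\overset{*}{\rightharpoonup} \z_\infty \in L^\infty_{loc}(\R^{d+1})$; combining this weak-$*$ convergence of $\z_r$ with the strong $L^1_{loc}$-convergence of $u_r$ allows to pass the rescaled equation to the distributional limit, giving
\begin{equation*}
\partial_s u_\infty = {\rm div}_y \z_\infty \quad \mbox{in } \mathcal{D}'(\R^{d+1}).
\end{equation*}

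Finally, I would test this identity against separable functions $\phi(s,y)=\eta_\epsilon(s)\chi(y)$, with $\chi\in \mathcal{D}(\R^d)$ non-negative and non-trivial and $\eta_\epsilon(s)=\epsilon^{-1}\eta(s/\epsilon)$ approximating the Dirac mass at $s=0$ with $\eta\ge 0$, $\int \eta=1$, $\eta(0)>0$. Since $u_\infty$ does not depend on $y$ and its only $s$-singularity is a jump of size $u^+-u^->0$ across $\{s=0\}$, the pairing of the left hand side with $\phi$ equals $(u^+-u^-)\eta_\epsilon(0)\int \chi\, dy = O(1/\epsilon)$, which diverges as $\epsilon \searrow 0$. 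In contrast, the corresponding pairing of the right hand side is dominated by $\|\z_\infty\|_\infty \|\nabla \chi\|_1 \|\eta_\epsilon\|_1=O(1)$, because ${\rm div}_y$ only involves spatial derivatives. The contradiction forces $\mathcal{H}^d(E)=0$.

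The main technical obstacle I foresee is the rigorous passage to the limit in the rescaled equation, which requires pairing the strong $L^1_{loc}$-convergence of $u_r$ (supplied by the $BV$ blow-up theorem) with the weak-$*$ compactness of $\{\z_r\}$ in $L^\infty_{loc}$ in such a way that $\partial_s u_r \to \partial_s u_\infty$ and ${\rm div}_y \z_r \to {\rm div}_y \z_\infty$ can be read off simultaneously in $\mathcal{D}'$. One must also be careful to select the base point $(t_0,x_0)$ both inside $E$ (possible since $\mathcal{H}^d(E)>0$) and outside the $\mathcal{H}^d$-null exceptional set of the blow-up theorem.
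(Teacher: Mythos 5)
Your argument is correct. Note first that the paper itself does not prove this lemma: it is quoted verbatim from \cite{leysalto}, so there is no internal proof to compare against. Your blow-up proof is a legitimate, self-contained route, and the mechanism you exploit (a jump with purely temporal normal forces $u_t$ to charge a $d$-dimensional set, which a spatial divergence of an $L^\infty$ field cannot do) is exactly the right one; the argument in \cite{leysalto} rests on the same incompatibility, implemented by testing the equation directly against products $\eta_\epsilon(t)\chi(x)$ concentrating in time, rather than by rescaling. Two small remarks that only strengthen your write-up. First, the convergence of the rescalings $u_r$ to the two-valued jump profile in $L^1(B_R)$ for every $R$ is not a ``generic'' property requiring the exclusion of an $\mathcal{H}^d$-null set: it holds at \emph{every} approximate jump point, since the defining half-ball conditions at radius $Rr$ are precisely the $L^1(B_R^{\pm})$-convergence of $u_r$ to $u^{\pm}$. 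Consequently your contradiction applies at every point of $E$, and you actually prove the stronger statement $E=\emptyset$. Second, the passage to the limit you flag as the main obstacle is in fact routine: the left-hand side pairs $u_{r_n}$ (strongly convergent in $L^1_{loc}$) against the fixed function $\partial_s\phi$, and the right-hand side pairs $\z_{r_n}$ (weak-$*$ convergent after a diagonal extraction over compacta) against the fixed function $\nabla_y\phi$, so the two convergences never need to be combined on a product of two varying sequences. With $u^+\neq u^-$ guaranteed by the definition of $J_u$, the divergence of the left-hand side against $\eta_\epsilon\chi$ versus the uniform bound $\Vert\z\Vert_\infty\Vert\nabla\chi\Vert_1\Vert\eta_\epsilon\Vert_1$ on the right-hand side closes the proof as you describe.
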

\begin{Definition}
Let $u \in BV_{loc}(Q_T)$ and let $\z \in L^\infty ([0,T]\times \R^d,\R^d)$ be such that $u_t = \div \z$ in $\mathcal{D}'(Q_T)$. We define the speed of the discontinuity set of $u$ as $\v(t,x) = \frac{\nu_t(t,x)}{|\nu_x(t,x)|}\, \mathcal{H}^d$-a.e. on $J_u$.
\end{Definition}
Next we quote a result encoding the Rankine--Hugoniot conditions that can  be found in \cite{leysalto} too.
\begin{Proposition}
\label{conrh}
Let $u \in BV_{loc}(Q_T)$ and let $\z \in L^\infty([0,T]\times \R^d,\R^d)$ be such that $u_t = \div \z$. For a.e. $t\in (0,T)$ we have
$$
[u(t)](x) \v(t,x) = [[\z \cdot \nu^{J_{u(t)}}]]_{+-}\quad \mathcal{H}^{d-1}-\mbox{a.e. in}\ J_{u(t)},
$$
where $[[\z \cdot \nu^{J_{u(t)}}]]_{+-}$ denotes the difference of traces from both sides of $J_{u(t)}$.
\end{Proposition}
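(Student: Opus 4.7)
The plan is to recast the equation $u_t=\div\z$ in its space-time conservation form $\mathrm{div}_{(t,x)}(u,-\z)=0$ in $\mathcal{D}'(Q_T)$, and then to extract the Rankine--Hugoniot relation from the vanishing of the jump of the normal component of this space-time vector field across $J_u$, exploiting that by hypothesis both $(u,-\z)\in L^\infty(Q_T;\R^{d+1})$ and $u\in BV_{loc}(Q_T)$.

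The first step is to invoke the Anzellotti--Chen--Frid pairing theory (as summarized in Subsection~\ref{sect:bv} and the Green's formula subsection) for the $L^\infty$ space-time vector field $(u,-\z)$, whose space-time divergence vanishes. The pairing measure $\bigl((u,-\z)\cdot D_{(t,x)} u\bigr)$ is then well defined and its singular part is concentrated on $J_u$; moreover, this measure admits one-sided Anzellotti traces $[(u,-\z)\cdot\nu]^{\pm}$ across $J_u$. The distributional identity $\mathrm{div}_{(t,x)}(u,-\z)=0$ forces the two one-sided normal traces to coincide $\mathcal{H}^d$-a.e. on $J_u$, giving
\begin{equation}
\nonumber
u^{+}\nu_t - [\z\cdot\nu_x]^{+}\;=\;u^{-}\nu_t - [\z\cdot\nu_x]^{-},
\end{equation}
equivalently $[u]\,\nu_t=[[\z\cdot\nu_x]]_{+-}$ on $J_u$.

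Next I would rescale by $|\nu_x|$, a step that is legitimate $\mathcal{H}^d$-a.e.\ on $J_u$ exactly thanks to the Lemma preceding the statement, which guarantees $\nu_x\neq 0$ there. Writing $\v=\nu_t/|\nu_x|$ by definition and observing that $\nu_x/|\nu_x|$ coincides with the spatial normal to the level set of $u$ at fixed $t$, this yields
\begin{equation}
\nonumber
[u]\,\v \;=\;\bigl[[\,\z\cdot(\nu_x/|\nu_x|)\,]\bigr]_{+-}\qquad \mathcal{H}^d\text{-a.e. on }J_u.
\end{equation}

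Finally, I would slice in $t$: by a Fubini-type disintegration of the pairing measure, for $\mathcal{L}^1$-a.e. $t\in(0,T)$ the slice of $J_u$ at time $t$ coincides $\mathcal{H}^{d-1}$-a.e.\ with $J_{u(t)}$, the space-time lateral traces $u^{\pm}(t,\cdot)$ match the spatial traces of $u(t)$, and the one-sided space-time Anzellotti traces of $\z$ restrict to the spatial Anzellotti traces of $\z(t,\cdot)$ against $\nu^{J_{u(t)}}$. After this slicing the above identity becomes exactly the stated relation $[u(t)](x)\,\v(t,x)=[[\z\cdot\nu^{J_{u(t)}}]]_{+-}$ on $J_{u(t)}$. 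The main obstacle is precisely this last disintegration step, which forces one to justify that the Anzellotti pairing measure in $Q_T$ disintegrates into the spatial Anzellotti pairing measures of $\z(t,\cdot)$ with $u(t)$ for a.e.\ $t$; this is the technical core borrowed from \cite{leysalto}. Everything else follows routinely from viewing $u_t=\div\z$ as a divergence-free vector field on $Q_T$ and reading off the Rankine--Hugoniot jump relation from the Anzellotti pairing.
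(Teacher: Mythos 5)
The paper does not actually prove Proposition \ref{conrh}; it quotes it from \cite{leysalto}, and your space-time divergence argument --- rewriting $u_t=\div\z$ as $\mathrm{div}_{(t,x)}(u,-\z)=0$, equating the one-sided Anzellotti--Chen--Frid normal traces of this bounded divergence-free field across $J_u$ to get $[u]\,\nu_t=[[\z\cdot\nu_x]]_{+-}$, dividing by $|\nu_x|$ (legitimate $\mathcal{H}^d$-a.e.\ by the preceding lemma), and then slicing in $t$ --- is essentially the proof given in that reference. The two points you leave implicit, namely that the normal trace of the first component is $u^{\pm}\nu_t$ (which uses $u\in BV_{loc}(Q_T)$ so that genuine one-sided approximate limits exist) and the disintegration of the space-time traces onto $J_{u(t)}$ for a.e.\ $t$, are precisely the technical steps carried out there, so your outline is sound and matches the intended argument.
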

The following statement is a particular case of Proposition 6.8 in \cite{Vicent2013}.
\begin{Proposition}
\label{7.1}
Let $u \in C([0,T];L^1(\R^d))\cap BV_{loc}(Q_T)$. Assume that $u_t=\div \z$ in $\mathcal{D}'(Q_T)$, where $\z = \a(u,\nabla u)$. Assume also that $u_t(t)$ is a Radon measure for a.e. $t>0$. Let $\varphi$ defined by \eqref{deffi} be a locally Lipschitz continuous function such that $\varphi(0)=0$. Then $u$ is an entropy solution of \eqref{template2} if and only if for any $(T,S) \in \mathcal{TSUB}$ (for any $(T,S) \in \mathcal{TSUB}\cup \mathcal{TSUPER}$) we have
$$
h_S(u,DT(u))^c+h_T(u,DS(u))^c \le (\z(t,x)\cdot D(T(u)S(u)))^c
$$
and for almost any $t>0$ the inequality
\begin{equation}
\begin{array}{rl}
\label{entcondsimple}
\displaystyle [ST \varphi(u(t))]_{+-} & \displaystyle- \ [J_{TS\varphi'}(u(t))]_{+-} 
\\ \\
& \displaystyle \le - \v [J_{TS}(u(t))]_{+-} + [[\z(t)\cdot \nu^{J_{u(t)}} ]T(u(t))S(u(t))]_{+-}
\end{array}
\end{equation}
holds $\mathcal{H}^{d-1}$-a.e. on $J_{u(t)}$.
\end{Proposition}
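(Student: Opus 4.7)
The plan is to decompose the entropy inequality \eqref{eineq} into its absolutely continuous, Cantor, and jump parts with respect to $\L^{d+1}$ on $Q_T$, show that the absolutely continuous contribution is automatically an identity thanks to the chain rule and the relation $\z=\a(u,\nabla u)$, and then identify the remaining two pieces with the stated conditions. Proposition~6.8 of \cite{Vicent2013} provides the template; we adapt it to the saturated-flux setting of \eqref{template2}.

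First I would test \eqref{eineq} against an arbitrary $\phi\in\mathcal{D}(Q_T)^+$ and integrate by parts in time to rewrite $\int J_{TS}(u)\phi' \, dx\,dt = -\int \partial_t J_{TS}(u)\,\phi \,dx\,dt$. Since $TS$ is Lipschitz and $u\in BV_{loc}(Q_T)$, the chain rule for $BV$ gives $\partial_t J_{TS}(u) = (TS)'(u)\,u_t$ as Radon measures, which by hypothesis equals $(TS)'(u)\,\div\,\z$. Applying the generalized Green's formula of Anzellotti recalled in Section~\ref{ACM} to pair $\z$ with the $BV$ function $T(u)S(u)\phi$, the right-hand side of \eqref{eineq} is rewritten so that, after moving all terms to one side, the entropy inequality becomes the assertion that
\begin{equation*}
h_S(u,DT(u)) + h_T(u,DS(u)) \;\le\; (\z \cdot D(T(u)S(u))) + \mathcal{J}
\end{equation*}
as signed Radon measures on $Q_T$, where $\mathcal{J}$ collects the terms produced by the speed $\v$ together with the trace contributions on $J_{u(t)}$ that arise from the distributional identity $u_t=\div\,\z$ and Proposition~\ref{conrh}.

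Next I would decompose both sides according to $\L^{d+1} + |D^c u| + \H^d \res J_u$. On the absolutely continuous part, the chain rule gives $h_S(u,\nabla T(u)) + h_T(u,\nabla S(u)) = \bigl(S(u)T'(u)+T(u)S'(u)\bigr)\,\a(u,\nabla u)\cdot\nabla u = \a(u,\nabla u)\cdot\nabla(T(u)S(u))$, so the ac parts cancel as an identity, independently of any entropy condition. On the Cantor part, formula \eqref{FUTab} evaluates $h_S(u,DT(u))^c$ and $h_T(u,DS(u))^c$ via the recession $h^0$, and balancing with $(\z\cdot D(T(u)S(u)))^c$ yields precisely the first stated inequality. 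On the jump set, the same formula \eqref{FUTab} expresses $h_S(u,DT(u))^j + h_T(u,DS(u))^j$ through $\int_{u^-}^{u^+}(TS)'(s)\,h^0(s,\nu_u)\,ds$; invoking $h^0(z,\xi)=\varphi(z)|\xi|$ from \eqref{pr8} and the elementary identity $\int_{u^-}^{u^+}(TS)'(s)\varphi(s)\,ds = [TS\varphi]_{+-}-[J_{TS\varphi'}]_{+-}$ (which uses $\varphi(0)=0$) produces the left-hand side of \eqref{entcondsimple}. Using Proposition~\ref{conrh} to replace $[u(t)]\v$ by the trace difference of $\z\cdot\nu^{J_{u(t)}}$ then reassembles the right-hand side of \eqref{entcondsimple} out of $\mathcal{J}$ and the jump part of $(\z\cdot D(T(u)S(u)))$.

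The main obstacle will be handling the functional calculus across jumps in a rigorous way, since the ``chain rule'' for $h_S(u,DT(u))$ on the jump set is not pointwise but given by the integral representation in \eqref{FUTab}, and the substitution $h^0(z,\xi)=\varphi(z)|\xi|$ must be reconciled carefully with the trace formula for $[\z\cdot\nu^{J_{u(t)}}]$ from Anzellotti's theory so that the two sides of \eqref{entcondsimple} match after the dust settles. Once this is in place, the converse implication is immediate: if the Cantor and jump inequalities hold and the ac part is an identity, summing the three contributions restores \eqref{eineq} for every $\phi\ge 0$. The extension of the class of admissible $(T,S)$ from the truncations $\mathcal{T}^+$ used in Definition~\ref{def:espb} to the broader classes $\mathcal{TSUB}$ and $\mathcal{TSUPER}$ follows by a standard density argument, approximating elements $p(r)=\tilde p(T_{a,b}(r))$ by positive combinations of translated truncations and passing to the limit using the lower semi-continuity of the measures $h_S(u,DT(u))$ quoted from \cite{DCFV} in Section~\ref{sect:functionalcalculus}.
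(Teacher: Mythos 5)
Your proposal is correct and follows essentially the same route as the paper, which simply invokes the proof of Proposition 7.1 in \cite{leysalto} (the analogue of Proposition 6.8 in \cite{Vicent2013}): decompose the entropy inequality into absolutely continuous, Cantor and jump parts, observe that the absolutely continuous part holds as an identity by the chain rule, and identify the Cantor and jump contributions with the two stated conditions. In particular you correctly isolate the only point the paper flags as requiring adaptation, namely that $\varphi(0)=0$ is what allows the jump contribution $[J_{S\varphi T'}(u(t))]_{+-}+[J_{T\varphi S'}(u(t))]_{+-}$ to be rewritten as $[ST\varphi(u(t))]_{+-}-[J_{TS\varphi'}(u(t))]_{+-}$.
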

\begin{proof}
The same proof as in Proposition 7.1 of \cite{leysalto} can be used. The only noticeable difference is found when extracting jump parts from the entropy inequalities  (\ref{eineq}). Here the property $\varphi(0)=0$ is needed in order to ensure that
$$
([J_{S\varphi T'}(u(t))]_{+-}+[J_{T\varphi S'}(u(t))]_{+-}) \mathcal{H}^{d-1}|_{J_{u(t)}}dt
$$
agrees with
$$ 
([ST \varphi(u(t))]_{+-}-[J_{TS\varphi'}(u(t))]_{+-})\mathcal{H}^{d-1}|_{J_{u(t)}}dt.
$$
Then the rest of the proof goes as in \cite{leysalto}.
\end{proof}

Now we state and prove the main result of the Section, which generalizes Proposition 8.1 in \cite{leysalto} (see also \cite{Vicent2013} for a similar statement concerning a related class of flux-limited equations). 
\begin{Proposition}
\label{8.1}
Let $u \in C([0,T];L^1(\R^d))$ be the entropy solution of \eqref{template2} with $0\le u(0)=u_0 \in L^\infty(\R^d)\cap BV(\R^d)$. Assume that $u \in BV_{loc}(Q_T)$. Assume further that $\varphi$ defined by (\ref{deffi}) is a convex, non-negative function such that $\varphi(0)=0$. Then the entropy conditions \eqref{entcondsimple} hold if and only if for almost any $t \in (0,T)$
\begin{equation}
\label{entcondsimple2}
[\z \cdot \nu^{J_{u(t)}}]_+ = \varphi(u^+(t)) \quad \mbox{and} \quad [\z \cdot \nu^{J_{u(t)}}]_- = \varphi(u^-(t))
\end{equation}
hold $\mathcal{H}^{d-1}$-a.e. on $J_{u(t)}$. Moreover the speed of any discontinuity front is 
\begin{equation}
\label{vformula}
\v=\frac{\varphi(u^+(t))-\varphi(u^-(t))}{u^+(t)- u^-(t)}.
\end{equation}
\end{Proposition}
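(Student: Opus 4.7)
The plan is to follow the strategy of Proposition~8.1 in \cite{leysalto}, adapting the argument to the present generality of convex $\varphi$; I will treat the two implications of the biconditional separately, and then read off \eqref{vformula} as a corollary of Proposition \ref{conrh}.

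For the ``if'' direction I would assume \eqref{entcondsimple2}. Proposition \ref{conrh} immediately yields \eqref{vformula}, since $[\z\cdot\nu^{J_{u(t)}}]_+-[\z\cdot\nu^{J_{u(t)}}]_-=\varphi(u^+)-\varphi(u^-)=\v(u^+-u^-)$. Substituting both identities into the right-hand side of \eqref{entcondsimple}, using $\varphi(0)=0$ together with $J_{TS\varphi'}(u^+)-J_{TS\varphi'}(u^-)=\int_{u^-}^{u^+}T(r)S(r)\varphi'(r)\,dr$, the inequality collapses to
\begin{equation*}
\int_{u^-}^{u^+}T(r)S(r)\left[\varphi'(r)-\frac{\varphi(u^+)-\varphi(u^-)}{u^+-u^-}\right]dr\ge 0.
\end{equation*}
For every $(T,S)\in\mathcal{TSUB}\cup\mathcal{TSUPER}$ the product $TS$ is monotone non-decreasing on $[u^-,u^+]$, since $(TS)'=T'S+TS'$ is non-negative under each admissible combination of sign constraints. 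By convexity of $\varphi$ the bracketed factor is non-decreasing with zero mean on $[u^-,u^+]$, whence a Chebyshev-type rearrangement inequality gives the required non-negativity.

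For the ``only if'' direction, fix $k\in(u^-,u^+)$ and insert into \eqref{entcondsimple} an approximating sequence $(T_n,S_n)\subset\mathcal{TSUB}$ with $S_n\equiv 1$ and $T_n\nearrow\chi_{\{r>k\}}$; the approximation is achievable within the class because the defining pattern $\tilde p\circ T_{a,b}$ with $\tilde p$ smoothly interpolating between $0$ and $1$ accommodates any step-like limit. Passing to the limit by dominated convergence (allowed since $u\in L^\infty$) produces
\begin{equation*}
\varphi(k)+\v(u^+-k)\le [\z\cdot\nu^{J_{u(t)}}]_+\qquad\forall\,k\in(u^-,u^+).
\end{equation*}
Sending $k\to u^+$ gives $\varphi(u^+)\le [\z\cdot\nu^{J_{u(t)}}]_+$; sending $k\to u^-$ and using the Rankine--Hugoniot identity of Proposition \ref{conrh} gives $\varphi(u^-)\le [\z\cdot\nu^{J_{u(t)}}]_-$. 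The reverse inequalities come from the intrinsic convex-analytic bound \eqref{pr9}, namely $\a(z,\xi)\cdot\eta\le h^0(z,\eta)=\varphi(z)|\eta|$; applied at the unit normal $\eta=\nu^{J_{u(t)}}$ and transferred to the Anzellotti normal traces this reads $[\z\cdot\nu^{J_{u(t)}}]_\pm\le\varphi(u^\pm)$. Combining the two chains of inequalities yields \eqref{entcondsimple2}, and then \eqref{vformula} follows directly from Proposition \ref{conrh}.

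The delicate step, in my view, is the transfer of \eqref{pr9} from a pointwise bound on the flux function $\a$ to the Anzellotti normal traces along $J_{u(t)}$. The plan here is to localize $\z$ in a tubular neighborhood of the jump set, regularize so as to place the field firmly inside the framework of \cite{Anzellotti1}, and invoke the $L^\infty$-continuity of the trace operator $\gamma:X_p\to L^\infty$; the hypothesis that $D^s u$ has no Cantor part is precisely what ensures the singular flux concentrates on $J_{u(t)}$ so that the lateral traces are well defined. All remaining manipulations---evaluating $J_{TS}$ and $J_{TS\varphi'}$ on step-like limits, verifying that the absolutely continuous part of \eqref{entcondsimple} is automatically satisfied across the jump, and running a symmetric approximation from $\mathcal{TSUPER}$ to cross-check the necessity direction---are routine and parallel those in \cite{leysalto}.
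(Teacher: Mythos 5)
Your proof is correct and follows the same architecture as the paper's (test the jump-part entropy inequality \eqref{entcondsimple} with well-chosen truncations for necessity; reduce to a convexity inequality for sufficiency), but both halves are executed with genuinely different devices. For necessity, the paper tests with the ramp $S(r)T(r)=(r-(u^+-\epsilon))^+$ and extracts $[\z\cdot\nu^{J_{u(t)}}]_+=\varphi(u^+)$ from an $\epsilon$-versus-$\epsilon^2$ contradiction as $\epsilon\to 0$, handling the minus trace "in a similar way"; you instead test with step-like limits $\chi_{(k,\infty)}$ for every $k\in(u^-,u^+)$, obtaining the one-parameter family of inequalities $\varphi(k)+\v(u^+-k)\le[\z\cdot\nu^{J_{u(t)}}]_+$ and recovering both traces from the two endpoint limits $k\to u^{\pm}$ plus Rankine--Hugoniot --- a slightly more economical route, and your computation of the four bracket terms checks out. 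Both arguments rely on the same a priori bound $|[\z\cdot\nu^{J_{u(t)}}]_\pm|\le\varphi(u^\pm)$, which the paper dismisses as "clearly" holding and which you rightly single out as the only genuinely delicate point. For sufficiency, the paper reduces to $TS=\chi_{(d,\infty)}$ (resp.\ $c+c'\chi_{(d,\infty)}$) and runs a three-way case analysis for each of $\mathcal{TSUB}$ and $\mathcal{TSUPER}$; your observation that $(TS)'\ge 0$ in \emph{both} classes, combined with Chebyshev's integral inequality applied to the non-decreasing pair $TS$ and $\varphi'-\v$ (the latter having zero mean on $[u^-,u^+]$ by the Rankine--Hugoniot identity), disposes of all test pairs at once and is cleaner. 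One small correction: the hypothesis that $D^s u$ has no Cantor part, which you invoke in your closing paragraph, belongs to Theorem \ref{Main2}.\emph{\ref{p3}} and to the splitting in Proposition \ref{7.1}, not to Proposition \ref{8.1} itself; here \eqref{entcondsimple} is already by definition the jump-part condition, so no such assumption is needed (nor is there an "absolutely continuous part of \eqref{entcondsimple}" to verify).
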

\begin{proof}
The proof is a suitable generalization of that given for Proposition 8.1 in \cite{leysalto}. Recall that the Rankine--Hugoniot conditions stated in Proposition \ref{conrh} are 
$$
 \v [u]_{+-} = [[\z \cdot \nu^{J_{u(t)}}]_{+-}.
$$
Let us show that \eqref{entcondsimple} implies \eqref{entcondsimple2}. For that we let $\epsilon >0$ be such that $u^- < u^+-\epsilon <u^+$ and we choose $(S,T)\in \mathcal{TSUB}$ so that $S(r)T(r)=(r-(u^+-\epsilon))^+$. Then we compute:
\begin{enumerate}
\item $[ST \varphi(u(t))]_{+-}= \epsilon \varphi(u^+)$,
\item $[J_{TS}(u(t))]_{+-}= \frac{\epsilon^2}{2}$,
\item $[[\z(t) \cdot \nu^{J_{u(t)}}]T(u(t))S(u(t))]_{+-}=\epsilon [\z(t) \cdot \nu^{J_{u(t)}}]_{+}$,
\item $[J_{TS\varphi'}(u(t))]_{+-}= \int_{u^+-\epsilon}^{u^+}(r-(u^+-\epsilon))\varphi'(r) \, dr \le \epsilon (\varphi(u^+)-\varphi(u^-))\le C\epsilon^2$ for some $C>0$, as $\varphi$ is locally Lipschitz.
\end{enumerate}
Then \eqref{entcondsimple} is written as
\begin{equation}
\nonumber
\epsilon (\varphi(u^+) -  [\z(t) \cdot \nu^{J_{u(t)}}]_{+}) \le C\epsilon^2-\frac{\epsilon^2}{2} v,
\end{equation}
which is a contradiction unless $ [\z(t) \cdot \nu^{J_{u(t)}}]_{+}= \varphi(u^+)$ (as $| [\z(t) \cdot \nu^{J_{u(t)}}]_{+}|\le \varphi(u^+)$ clearly holds). We show that $ [\z(t) \cdot \nu^{J_{u(t)}}]_{-}= \varphi(u^-)$ in a similar way. Using the Rankine--Hugoniot condition, the speed of the front is given by
$$
\v = \frac{[\z \cdot \nu^{J_{u(t)}}]_{+}- [\z \cdot \nu^{J_{u(t)}}]_{-}}{u^+- u^-} = \frac{\varphi(u^+)-\varphi(u^-)}{u^+- u^-}.
$$

Let us show now the converse implication. Thanks to \eqref{entcondsimple2} we may write
\[
\begin{split}
[[\z \cdot \nu^{J_{u(t)}}]T(u)S(u)]_{+-} & = [\z \cdot \nu^{J_{u(t)}}]_+T(u^+)S(u^+) - [\z \cdot \nu^{J_{u(t)}}]_-T(u^-)S(u^-)
\\
& = \varphi(u^+)T(u^+)S(u^+) - \varphi(u^-)T(u^-)S(u^-)= [ST\varphi(u(t))]_{+-}.
\end{split}
\]
Thus, we recast \eqref{entcondsimple} as
\begin{equation}
\label{entcondsimple3}
\frac{\varphi(u^+)-\varphi(u^-)}{u^+- u^-} [J_{TS(u(t))}]_{+-} \le [J_{TS\varphi'}]_{+-}.
\end{equation}
Let us show that \eqref{entcondsimple3} holds for any $(T,S)\in \mathcal{TSUB}\cup \mathcal{TSUPER}$. As argued in \cite{leysalto}, to treat the case $(T,S)\in \mathcal{TSUB}$ it suffices to deal with $TS(r)=p(r)=\chi_{(d,\infty)}(r)$. There are several sub-cases to consider:
\begin{itemize}
\item $u^-\ge 0$ and $d \le u^+$. Then $[J_p(u(t))]_{+-}=[u]_{+-}$ and $[J_{p\varphi'(u(t))}]_{+-}=[\varphi(u)]_{+-}$. Thus \eqref{entcondsimple3} holds.

\item $u^-\ge 0$ and $u^-<d \le u^+$. We compute $[J_p(u(t))]_{+-}=u^+-d$ and $[J_{p\varphi'(u(t))}]_{+-}=\varphi(u^+)-\varphi(d)$. Then \eqref{entcondsimple3} is equivalent to 
$$
\frac{\varphi(u^+)-\varphi(u^-)}{u^+- u^-} (u^+-d)\le \varphi(u^+) -\varphi(d)
$$
which in turn holds because $\varphi$ is convex.
\item $u^-\ge 0$ and $d > u^+$. Then $[J_p(u(t))]_{+-}=[J_{p\varphi'(u(t))}]_{+-}=0$. Hence \eqref{entcondsimple3} is trivially satisfied.
\end{itemize}
Similarly, to treat the case $(T,S)\in \mathcal{TSUPER}$ it suffices to deal with $TS(r)=p(r)=c+c'\chi_{(d,\infty)}(r),\, c\le 0,\, 0\le c'\le |c|$. Again, we consider the various sub-cases:
\begin{itemize}
\item $u^-\ge 0$ and $d \le u^+$. Then $[J_p(u(t))]_{+-}=(c+c')[u]_{+-}$ and $[J_{p\varphi'(u(t))}]_{+-}=(c+c')[\varphi(u)]_{+-}$. Thus \eqref{entcondsimple3} holds.

\item $u^-\ge 0$ and $u^-<d \le u^+$. We compute $[J_p(u(t))]_{+-}=c[u]_{+-}+c'(u^+-d)$ and $[J_{p\varphi'(u(t))}]_{+-}=c[\varphi(u)]_{+-}+c'(\varphi(u^+)-\varphi(d))$. Then \eqref{entcondsimple3} is equivalent to 
$$
\frac{\varphi(u^+)-\varphi(u^-)}{u^+- u^-}\le \frac{\varphi(u^+) -\varphi(d)}{u^+-d}
$$
which in turn holds because $\varphi$ is convex.

\item $u^-\ge 0$ and $d > u^+$. This time $[J_p(u(t))]_{+-}=c [u]_{+-}$ and $[J_{p\varphi'(u(t))}]_{+-}=c [\varphi(u)]_{+-}$. Hence  \eqref{entcondsimple3} is satisfied.
\end{itemize}
\end{proof}
\begin{Remark}
\label{contacto}
{\rm 
Under some additional assumptions we may derive from \eqref{vformula} a vertical contact angle condition, as pointed out in \cite{leysalto}. For that we assume that for $\mathcal{H}^d$-almost $x \in J_{u}$ there is a ball $B_x$ centered at $x$ such that either (a) $u|B_x \ge \alpha >0$ or (b) $J_u \cap  B_x$ is the graph of a Lipschitz function with $B_x \backslash J_u = B_x^1 \cup B_x^2$, where $B_x^1, B_x^2$ are open and connected and $u \ge \alpha >0 $ in $B_x^1$, while the trace of $u$ on $J_u \cap \partial B_x^2$ computed from $B_x^2$ is zero. 
In both cases $[\psi(L\nabla u/u) \cdot \nu^{J_{u(t)}}]_+=1$ on $J_u \cap B_x$. If (a) holds, we also have $[\psi(L\nabla u/u) \cdot \nu^{J_{u(t)}}]_-=1$ on $J_u \cap B_x$. Provided that the Jacobian matrix of $\psi$ is not compactly supported, these relations imply in particular that $|\nabla u|=\infty$.
}
\end{Remark}
\begin{Remark}
{\rm 
In case that $u^-=0$, \eqref{vformula} reduces to $\v = \varphi(u^+)/u^+$. As $\varphi$ is convex, this is compatible with Corollary \ref{c61}.
}
\end{Remark}
Now we give a sufficient condition to ensure that $u_t$ is a Radon measure, which is required in order to use Proposition \ref{8.1}.
\begin{Proposition}
Let $0 \le u_0 \in (L^1 \cap L^\infty)(\R^d)$ and let $u(t)$ be the entropy solution of \eqref{template2} with $u_0$ as initial datum. If $\varphi$ is homogeneous of degree $m>1$, then for any $t>0$, $u_t(t)$ is a Radon measure in $\R^d$. Moreover $\|u_t(t)Ê\|_{\mathcal{M}(\R^d)}\le \frac{2}{(m-1)t}\|Êu_0\|_1$.
\end{Proposition}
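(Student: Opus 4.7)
The plan is to use the Bénilan--Crandall scaling argument, exploiting the homogeneity of $\varphi$ to generate a one-parameter family of solutions. Since $\varphi(\lambda z)=\lambda^{m}\varphi(z)$ and $\psi$ is evaluated at the scale-invariant ratio $\nabla u/u$, a direct calculation shows that
$$v_\lambda(t,x):=\lambda\, u(\lambda^{m-1}t,x),\qquad \lambda>0,$$
is a distributional solution of \eqref{template2} with initial datum $\lambda u_0$: the temporal factor $\lambda^{m}$ coming from $(v_\lambda)_t=\lambda^{m}u_t(\lambda^{m-1}t,\cdot)$ exactly matches the factor produced by $\varphi(\lambda u)=\lambda^{m}\varphi(u)$ in $\div(\varphi(v_\lambda)\psi(\nabla v_\lambda/v_\lambda))$. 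One must further check that $v_\lambda$ is an \emph{entropy} solution: the truncation $T_{a,b}$ transforms as $T_{a,b}(\lambda u)=\lambda\,T_{a/\lambda,b/\lambda}(u)$, so the class $\mathcal T^+$ is merely permuted under rescaling, and each term of the entropy inequality \eqref{eineq} acquires a matching factor $\lambda^{m}$ once one also changes the time variable in the test function. By the uniqueness part of Theorem \ref{EUTEparabolic}, $v_\lambda$ is then the entropy solution emanating from $\lambda u_0$.

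With scaling established, apply the $L^{1}$-contraction in Theorem \ref{EUTEparabolic} in both directions to obtain
$$\|u(t)-\lambda\,u(\lambda^{m-1}t)\|_{1}\;\le\;\|u_0-\lambda u_0\|_{1}\;=\;|1-\lambda|\,\|u_0\|_{1}.$$
Pairing against $\phi\in\mathcal{D}(\R^d)$ with $\|\phi\|_\infty\le 1$ yields
$$\left|\int_{\R^d}\phi(x)\bigl[u(t,x)-\lambda\, u(\lambda^{m-1}t,x)\bigr]\,dx\right|\;\le\;|1-\lambda|\,\|u_0\|_{1}.$$
Denote the left-hand side (without absolute value) by $G_\phi(\lambda)$, so $G_\phi(1)=0$. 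Since $\z:=\a(u,\nabla u)\in L^{\infty}(Q_T,\R^d)$ (because $\psi$ is bounded and $u$ is essentially bounded), the PDE $u_t=\div\z$ in $\mathcal D'(Q_T)$ implies that $s\mapsto \int\phi\, u(s)\,dx$ is absolutely continuous on $(0,T)$ with derivative $-\int \z(s)\cdot\nabla\phi\,dx=\langle u_t(s),\phi\rangle$. Hence $G_\phi$ is differentiable at $\lambda=1$ with
$$G_\phi'(1)=-\int_{\R^d}\phi\, u(t)\,dx\;-\;(m-1)t\,\langle u_t(t),\phi\rangle,$$
and dividing by $|1-\lambda|$ and sending $\lambda\to 1$ gives $|G_\phi'(1)|\le\|u_0\|_{1}$. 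Combining with the a priori bound $\|u(t)\|_{1}\le\|u_0\|_{1}$ (which follows from the contraction against the trivial solution $\bar u\equiv 0$), the triangle inequality produces
$$(m-1)t\,\bigl|\langle u_t(t),\phi\rangle\bigr|\;\le\;\|u_0\|_{1}+\|u(t)\|_{1}\;\le\;2\,\|u_0\|_{1}.$$
Taking the supremum over all admissible $\phi$ shows that $u_t(t)$ is a Radon measure with the claimed total-variation bound.

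The main obstacle is the first paragraph: the rescaled profile $v_\lambda$ must be shown to be an entropy solution (not merely a distributional one), and this requires tracking the homogeneity through the functional calculus on $TBV_{\rm r}^{+}(\R^d)$ used in Definition \ref{def:espb}. The direct bookkeeping outlined above works, but a cleaner alternative is to verify the scaling at the level of the Crandall--Liggett time-discrete scheme from which the solution is constructed in \cite{ACMMRelat}: there the elliptic step is manifestly compatible with the transformation $u\mapsto \lambda u$ under homogeneous $\varphi$, and the $L^{1}$-contraction propagates the identity $U_\lambda(t)=\lambda u(\lambda^{m-1}t)$ to the continuous limit.
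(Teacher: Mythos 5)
Your argument is correct and is precisely the Bénilan--Crandall scaling argument for homogeneous operators: the paper proves this Proposition by simply citing \cite{BenilanNote}, and the content of that reference is exactly the computation you reproduce (the scaling identity $v_\lambda(t)=\lambda u(\lambda^{m-1}t)$, legitimized at the level of the Crandall--Liggett scheme, combined with the $L^1$-contraction and differentiation at $\lambda=1$). The only caveat is that differentiability of $G_\phi$ at $\lambda=1$ is guaranteed only at Lebesgue points of $s\mapsto\int_{\R^d}\z(s)\cdot\nabla\phi\,dx$, i.e.\ for a.e.\ $t$ rather than every $t$, but this is also the natural level of precision at which $u_t(t)$ is defined and matches what the cited result delivers.
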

\begin{proof}
This is a direct consequence of the results in \cite{BenilanNote}.
\end{proof}

Next we wonder about the admissible discontinuity gaps for a given speed.
\begin{Lemma}
\label{luegotellamo}
Assume that $\varphi$ is strictly convex. Then, given values $\v, u^+$ such that $\varphi(u^+)/u^+\le \v < \varphi'(u^+)$, there exists an unique value $u^- \in [0,u^+)$ such that relation \eqref{vformula} holds. 
\end{Lemma}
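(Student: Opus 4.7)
\medskip

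\noindent\textbf{Proof plan.} The plan is to reduce the question to a monotonicity statement for the secant-slope function of $\varphi$ anchored at $u^+$. Concretely, I would introduce
$$
F:[0,u^+)\longrightarrow \R,\qquad F(u):=\frac{\varphi(u^+)-\varphi(u)}{u^+-u},
$$
so that the required relation \eqref{vformula} is exactly $F(u^-)=\v$. The target interval for $\v$ is naturally read off from the endpoint behaviour of $F$: since $\varphi(0)=0$ we have $F(0)=\varphi(u^+)/u^+$, and by the very definition of the derivative
$$
\lim_{u\nearrow u^+}F(u)=\varphi'(u^+).
$$
Continuity of $F$ on $[0,u^+)$ is immediate from continuity of $\varphi$.

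The core step is to show that $F$ is strictly increasing on $[0,u^+)$; granted this, the statement follows at once from the intermediate value theorem together with injectivity. To establish strict monotonicity I would invoke the classical three-slopes property of strictly convex functions: for any $0\le u_1<u_2<u^+$ one has
$$
\frac{\varphi(u_2)-\varphi(u_1)}{u_2-u_1}<\frac{\varphi(u^+)-\varphi(u_1)}{u^+-u_1}<\frac{\varphi(u^+)-\varphi(u_2)}{u^+-u_2},
$$
the right-hand inequality being precisely $F(u_1)<F(u_2)$. Alternatively, if one wishes to use that $\varphi'(u^+)$ exists (which is implicit in the hypothesis $\v<\varphi'(u^+)$), one may differentiate $F$ and observe that $F'(u)>0$ is equivalent to $\varphi(u^+)-\varphi(u)>\varphi'(u)(u^+-u)$, which is just the tangent-line characterization of strict convexity applied at $u$.

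Putting the pieces together: $F$ is continuous and strictly increasing on $[0,u^+)$, $F(0)=\varphi(u^+)/u^+$, and $F(u)\to\varphi'(u^+)$ as $u\nearrow u^+$. Consequently $F$ is a strictly increasing bijection from $[0,u^+)$ onto $[\varphi(u^+)/u^+,\varphi'(u^+))$, so for every $\v$ in the latter interval there exists a unique $u^-\in[0,u^+)$ with $F(u^-)=\v$, which is the claim. The only mild subtlety (hardly an obstacle) is making sure the three-slope inequality applies up to the endpoint $u_1=0$; this requires nothing beyond convexity of $\varphi$ on $[0,\infty)$ together with $\varphi(0)=0$ supplied by Assumption \ref{asphi}.\emph{\ref{zzero}}.
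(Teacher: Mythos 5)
Your proof is correct and follows essentially the same route as the paper: both consider the secant-slope function $x\mapsto(\varphi(u^+)-\varphi(x))/(u^+-x)$ on $[0,u^+)$, evaluate it at the endpoints using $\varphi(0)=0$ and the definition of $\varphi'(u^+)$, and conclude by strict monotonicity that it is a bijection onto $[\varphi(u^+)/u^+,\varphi'(u^+))$. The paper establishes monotonicity by computing the derivative and invoking the tangent-line characterization of strict convexity, which is exactly your alternative argument; your primary three-slope argument is a marginally more elementary variant of the same step.
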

\begin{proof}
Given the value $u^+$, we look for values of $u^-$ such that  \eqref{vformula} holds (provided they exist). We consider the function
$$
\phi(x)= \frac{\varphi(u^+) - \varphi(x)}{u^+ - x},
$$
 defined for $x\in [0,u^+)$. It is easily seen that
 $$
\phi(0) = \varphi(u^+)/u^+ \quad \mbox{and} \quad \phi(u^+)= \varphi'(u^+).
$$
Next we compute
$$
\phi'(x)= \frac{\varphi(u^+) - (u^+-x)\varphi'(x) -\varphi(x)}{(u^+-x)^2} >0 \quad \mbox{for}\ x\in (0,u^+).
$$
Thus, $\phi$ is a bijection from $[0,u^+]$ to $[\varphi(u^+)/u^+, \varphi'(u^+)]$, which implies our result.
\end{proof}

Contrary to the situation depicted in the previous result, we have:
\begin{Corollary}
\label{zerocase}
Let $\varphi(z)=s z$. Then the only speed of propagation for discontinuity fronts that is allowed is precisely $s$, while any values of $u^+>u^-\ge 0$ are admissible for such a discontinuity. 
\end{Corollary}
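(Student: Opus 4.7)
The plan is to apply Proposition \ref{8.1} directly. For $\varphi(z)=sz$ the hypotheses are satisfied: $\varphi$ is linear (hence convex), non-negative on $[0,\infty)$ and vanishes at the origin. Substituting into the speed formula \eqref{vformula} yields
$$
\v = \frac{\varphi(u^+)-\varphi(u^-)}{u^+-u^-} = \frac{s(u^+-u^-)}{u^+-u^-} = s
$$
for any admissible pair of lateral traces. Thus the speed is completely insensitive to the size of the jump and is always exactly $s$, which proves the first assertion.

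For the second assertion, I would note that Proposition \ref{8.1} characterizes entropy-admissible jumps by the trace conditions $[\z \cdot \nu^{J_{u(t)}}]_{\pm} = \varphi(u^{\pm})$. With $\varphi(z)=sz$ these read $[\z\cdot\nu^{J_{u(t)}}]_+=su^+$ and $[\z\cdot\nu^{J_{u(t)}}]_-=su^-$ and impose no constraint linking $u^+$ and $u^-$ to one another beyond the ordering $u^+>u^-\ge 0$ already present in our conventions. The contrast with Lemma \ref{luegotellamo} is instructive: when $\varphi$ is strictly convex, the map $u^-\mapsto(\varphi(u^+)-\varphi(u^-))/(u^+-u^-)$ is a bijection of $[0,u^+)$ onto $[\varphi(u^+)/u^+,\varphi'(u^+))$, so each admissible speed uniquely selects the matching $u^-$. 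For linear $\varphi$ this map degenerates to the constant $s$, whence every pair $u^+>u^-\ge 0$ is compatible with the unique admissible speed.

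No real obstacle is anticipated; the corollary is an immediate unpacking of Proposition \ref{8.1} in the degenerate convex case $\varphi(z)=sz$. The conceptual content is precisely the contrast with the strictly convex regime of Lemma \ref{luegotellamo}: in the prototype class \eqref{template} any two non-negative levels can be connected by an admissible shock, while for genuinely nonlinear $\varphi$ (as in the porous-media variants \eqref{mrhe}) the jump pair is rigidly tied to the propagation speed.
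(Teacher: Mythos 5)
Your proof is correct and follows exactly the route the paper intends: the corollary is stated without proof as an immediate consequence of Proposition \ref{8.1}, namely substituting $\varphi(z)=sz$ into \eqref{vformula} to get $\v=s$ and observing that the trace conditions \eqref{entcondsimple2} place no constraint tying $u^+$ to $u^-$. The contrast you draw with Lemma \ref{luegotellamo} matches the paper's framing (``Contrary to the situation depicted in the previous result'').
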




\section*{Acknowledgments}
The author acknowledges J.M.~Maz\'on for his useful comments on a previous version of this document and the anonymous referees of SIAM J. Math. Anal., which helped to improve greatly the contents of the paper with their comments. He also thanks F. Andreu, V. Caselles and J.M. Maz\'on for their support during these years. J. Calvo acknowledges partial support by a Juan de la Cierva grant of the spanish MEC, by the ``Collaborative Mathematical Research'' programme by ``Obra Social La Caixa'', by MINECO (Spain), project MTM2014-53406-R, FEDER resources, and Junta de Andaluc\'ia Project P12-FQM-954.



\begin{thebibliography}{99}
\bibitem{Agueh}
{\sc M.~Agueh}, {\em Existence of solutions to degenerate parabolic equations via the Monge-Kantorovich theory}, PhD Thesis, Georgia Tech, Atlanta, 2001.
%
%
\bibitem{Anzellotti1}
{\sc G.~Anzellotti}, {\em Pairings between measures and bounded functions
and compensated compactness},
Ann. di Matematica Pura ed Appl. IV, 135
(1983), pp.~93--318.

%
\bibitem{Ambrosio}
{\sc L.~Ambrosio, N.~Fusco and D.~Pallara}, {\em Functions of
Bounded Variation and Free Discontinuity Problems},
 Oxford Mathematical Monographs, 2000.
 
\bibitem{AmbrosioII}
{\sc L.~Ambrosio, N.~Gigli and  G.~Savar\'e}, {\em Gradient flows in metric spaces and in the spaces of probability measures},
 Lectures in Mathematics ETH Zurich, Birkhauser Verlag, Basel, 2005. 

\bibitem{Pisa}
{\sc F. Andreu, V. Caselles and J. Maz\'on},{\em A Strongly Degenerate Quasilinear Equation: the Elliptic Case} Ann. Scuola Norm. Sup. Pisa Cl. Sci. 5, Vol III (2004), pp.~555--587.

\bibitem{ACMEllipticFLDE}
{\sc F.~Andreu, V.~Caselles and J.M.~Maz\'on},
{\em A Strongly Degenerate Quasilinear  Elliptic Equation},
Nonlinear Analysis, TMA, 61 (2005), pp.~637--669.


\bibitem{ACMMRelat}
{\sc F.~Andreu, V.~Caselles and J.M.~Maz\'on}, {\em The Cauchy
Problem for a Strongly Degenerate Quasilinear Equation}, J. Europ.
Math. Soc., 7 (2005), pp.~361--393.

\bibitem{ACMregularity}
{\sc F.~Andreu, V.~Caselles and  J.M. Maz\'on}, {\em Some regularity results on the  `relativistic' heat
equation}, Journal of Differential Equations 245 (2008), pp.~3639--3663.

\bibitem{ARMAsupp}
{\sc F.~Andreu, V.~Caselles, J.M.~Maz\'on and S.~Moll,} {\em Finite propagation speed for limited flux diffusion equations}, Arch. Rat. Mech. Anal., 182 (2006), pp.~269--297.

\bibitem{TrMedia}
{\sc F.~Andreu, V.~Caselles, J.M.~Maz\'on and S.~Moll,} {\em A diffusion equation in transparent media}, J. Evol. Equ., 7 (2007), pp.~113--143. 
%
\bibitem{ACMSV} 
{\sc F.~Andreu, V.~Caselles, J.M.~Maz\'on, J.~Soler and M.~Verbeni}, {\em Radially symmetric solutions of a tempered diffusion equation. A porous media flux-limited case}, SIAM J. Math. Anal., 44 (2012), pp.~1019--1049.

\bibitem{BenilanNote}
{\sc Ph.~Benilan, M.G.~Crandall}, {\em Regularizing effects of homogeneous evolution equations}, in Contribution to Analysis and Geometry, 1981, pp.~23--39.

\bibitem{6son6}
{\sc Ph.~Benilan, L.~Boccardo, T.~Gallouet, R.~Gariepy, M.~Pierre and J.L.~Vazquez}, {\em An L1-Theory of Existence and Uniqueness of Solutions of Nonlinear Elliptic Equations}, Ann. Scuola Normale Superiore di Pisa, IV, Vol. XXII (1995), pp.~241--273.

\bibitem{BdPasso}
{\sc M.~Bertsch and R.~Dal Passo}, {\em Hyperbolic Phenomena in a Strongly Degenerate Parabolic Equation},
Arch. Rational Mech. Anal., 117 (1992), pp.~349--387.


\bibitem{Brenier1}
{\sc Y.~Brenier}, {\em Extended Monge-Kantorovich Theory},
In Optimal Transportation and Applications, Lectures given at the C.I.M.E. Summer School help
in Martina Franca, 
Lecture
Notes in Math. 1813, L.A. Caffarelli and S. Salsa, eds., Springer--Verlag, 2003, pp.~91--122.

%
\bibitem{2014}
{\sc J.~Calvo, J.~Campos, V.~ Caselles,O.~S\'anchez and J.~Soler},{\em Flux saturated porous media diffusion equations and applications}, to appear in J. Eur. Math. Soc. (JEMS)

\bibitem{Carrillo1} 
{\sc J.~Carrillo and P.~Wittbold}, {\em Uniqueness of Renormalized Solutions of Degenerate Elliptic-Parabolic
problems}, J. Differ. Equ., 156 (1999), pp.~93--121.



 \bibitem{leysalto}
{\sc V.~Caselles}, {\em On the entropy conditions for some flux limited diffusion equations}, J. Diff. Eqs., 250 (2011), pp.~3311--3348.

\bibitem{Vicent2013}
{\sc V.~Caselles}, {\em Flux limited generalized porous media diffusion equations}, Publ. Mat., 57 (2013), pp.~155--217. 
 

 \bibitem{Rosenau2003}
{\sc A.~Chertock, A.~Kurganov and P.~Rosenau},
 {\em Formation of discontinuities in flux-saturated degenerate parabolic equations},
 Nonlinearity, 16 (2003), pp.~1875--1898.
 
  \bibitem{Rosenau2005}
{\sc A.~Chertock, A.~Kurganov and P.~Rosenau},
 {\em On degenerate saturated-diffusion equations with convection},
 Nonlinearity, 18 (2005), pp.~609--630.
 
 \bibitem{KSlimitado}
{\sc A.~Chertock, A.~Kurganov, X.~Wang and Y.~Wu},
{\em On a chemotaxis model with saturated chemotactic flux}, 
Kinetic and Related Models, 5 (2012), pp.~51--95. 


%
\bibitem{Coulombel}
{\sc J.-F.~Coulombel, F.~Golse and T.~Goudon}, {\em Diffusion Approximation and Entropy-based Moment Closure for Kinetic Equations}, Asymptotic Analysis, 45 (2005), pp.~1--39.  	

\bibitem{CrandallLiggett}
{\sc M.G.~Crandall and T.M.~Liggett}, {\em Generation of Semigroups of Nonlinear
Transformations on General Banach Spaces},
 Amer. J. Math., 93 (1971), pp.~265--298.

\bibitem{Dalmaso}
{\sc G.~Dal Maso}, {\em Integral representation on $BV(\Omega)$ of $\Gamma$-limits of
variational integrals},
Manuscripta Math., 30 (1980), pp.~387--416.

\bibitem{DCFV}
{\sc V.~De Cicco, N.~Fusco and A.~Verde}, {\em On $L^1$-lower semicontinuity in $BV$},
J. Convex Analysis, 12 (2005), pp.~173--185.
%


\bibitem{Giacomelli}
{\sc L.~Giacomelli}, {\em Finite speed of propagation and waiting-time phenomena for degenerate parabolic equations with linear growth Lagrangian}. Preprint.

%

\bibitem{Jordan} 
{\sc R.~Jordan, D.~Kinderlehrer and F.~Otto}, {\em The variational formulation of the Fokker--Planck equation}, SIAM J. Math. Anal., 29 (1998), pp.~1--17.


\bibitem{Kruzhkov}
{\sc S.N.~Kruzhkov}, {\em First order quasilinear equations in several independent
variables},
 Math. USSR-Sb., 10 (1970), pp.~217--243.


\bibitem{Rosenau2006}
{\sc A.~Kurganov and P.~Rosenau}, {\em On reaction processes with saturating diffusion},
Nonlinearity, 19  (2006), pp.~171--193.


\bibitem{Levermore79}
{\sc C.D.~Levermore}, {\em A Chapman--Enskog approach to flux limited diffusion theory}, Technical Report, Lawrence Livermore Laboratory, UCID-18229 (1979). 	

\bibitem{Levermore81}
{\sc C.D.~Levermore and G.C.~Pomraning}, {\em A flux-limited diffusion theory}, The astrophysical journal,  248 (1981), pp.~321--334.  	
%
\bibitem{Levermore84}
{\sc C.D.~Levermore}, {\em Relating Eddington factors to flux limiters}, J. Quant. Spectrosc. Radiat. Transfer, 31  (1984), pp.~149--160.  	


\bibitem{McCann}
{\sc R.~Mc Cann and M.~Puel}, {\em Constructing a relativistic heat flow by transport
time step},
 Annales de l'Institut Henri Poincar\'e (C) Non Linear Analysis, 26  (2009), pp.~2539--2580.

\bibitem{Mihalas}
{\sc D.~Mihalas, B.~Mihalas}, {\em Foundations
of radiation hydrodynamics}, Oxford University Press, 1984.

\bibitem{Olson}
{\sc G.L.Olson, L.H.Auer and M.L.Hall}, {\em Diffusion, $P_{1}$, and other approximate forms of radiation transport},
 Journal of Quantitative Spectroscopy \& Radiative Transfer, 64 (2000), pp.~619--634.

 \bibitem{Rosenau92}
{\sc P.~Rosenau}, {\em Tempered Diffusion: A Transport Process with Propagating Front and Inertial Delay},
 Phys. Review A, 46 (1992), pp.~7371--7374.


\bibitem{Verbeni} 
{\sc M.~Verbeni, O.~S\'anchez, E.~Mollica, I.~Siegl-Cachedenier, A.~Carleton, I.~Guerrero, A.~Ruiz i Altaba and J.Soler}, {\em Morphogenetic action  through flux-limited spreading}, Phys. Life Rev., 10 (2013), pp.~457--475.



%

\end{thebibliography}
\end{document}